\pgfplotsset{
	compat=newest, 
	cycle list name=exotic }
\newtheorem{theorem}{Theorem}[section]
\newtheorem{proposition}{Proposition}[section]
\newtheorem{lemma}{Lemma}[section]
\newtheorem{corollary}{Corollary}[section]
\newcommand{\relmiddle}[1]{\mathrel{}\middle#1\mathrel{}}
\DeclareMathOperator*{\argmax}{arg\,max}
\begin{document}
	
	\title{Adapting to general quadratic loss via singular value shrinkage}	
	
	\author{Takeru Matsuda\thanks{Department of Mathematical Informatics, The University of Tokyo \& Statistical Mathematics Unit, RIKEN Center for Brain Science, e-mail: \texttt{matsuda@mist.i.u-tokyo.ac.jp}}}
	
	\date{}

\maketitle

\begin{abstract}
The Gaussian sequence model is a canonical model in nonparametric estimation.
In this study, we introduce a multivariate version of the Gaussian sequence model and investigate adaptive estimation over the multivariate Sobolev ellipsoids, where adaptation is not only to unknown smoothness but also to arbitrary quadratic loss. 
First, we derive an oracle inequality for the singular value shrinkage estimator by Efron and Morris, which is a matrix generalization of the James--Stein estimator. 	Next, we develop an asymptotically minimax estimator on the multivariate Sobolev ellipsoid for each quadratic loss, which can be viewed as a generalization of Pinsker's theorem.
Then, we show that the blockwise Efron--Morris estimator is exactly adaptive minimax over the multivariate Sobolev ellipsoids under the corresponding quadratic loss. 
It attains sharp adaptive estimation of any linear combination of the mean sequences simultaneously.
\end{abstract}
	
	\section{Introduction}
Suppose that we observe
\begin{align*}
	y_i = \theta_i + \varepsilon \xi_i, \quad i=1,2,\cdots,
\end{align*}
where $\varepsilon>0$ and $\xi_{i} \sim {\rm N}_p (0, I_p)$ are independent $p$-dimensional Gaussian random vectors.
This is a multivariate version of the Gaussian sequence model, which is a canonical model in nonparametric estimation \citep{Johnstone,Tsybakov,Wasserman}.
We consider estimation of $\theta=(\theta_{i})$ under the quadratic loss specified by a $p \times p$ positive definite symmetric matrix $Q$:
\begin{align*}
	L_Q(\theta,\hat{\theta}) = \sum_{i=1}^{\infty} (\hat{\theta}_i-\theta_i)^{\top} Q (\hat{\theta}_i - \theta_i).
\end{align*}
Let $R_Q(\theta,\hat{\theta}) = {\rm E}_{\theta} [L_Q(\theta,\hat{\theta})]$ be the risk function.
For a parameter space $\Theta$, an estimator $\hat{\theta}_*$ is said to be asymptotically minimax on $\Theta$ if
\begin{align*}
	\sup_{\theta \in \Theta} R_Q(\theta,\hat{\theta}_*) \sim \inf_{\hat{\theta}} \sup_{\theta \in \Theta} R_Q(\theta,\hat{\theta})
\end{align*}
as $\varepsilon \to 0$, where $a \sim b$ denotes $a/b \to 1$ and $\inf$ is taken over all estimators.
For a class of parameter spaces $\mathcal{C}= \{ \Theta \}$, an estimator $\hat{\theta}_*$ is said to be adaptive minimax over $\mathcal{C}$ if it is asymptotically minimax on every $\Theta$ in $\mathcal{C}$.
Note that we focus on exact minimaxity in this study, which is stronger than minimaxity in terms of the rate of convergence.

The above problem has been well studied in the case of $p=1$ \cite{Johnstone,Cai12}, where we can fix $Q=1$ without loss of generality.
It is asymptotically equivalent to various problems such as nonparametric regression \cite{Brown} and density estimation \cite{Nussbaum}.
In these settings, each $\theta_i$ corresponds to the coefficient in basis expansion of an unknown function $f$, and smoothness constraints on $f$ are translated to domain constraints on $\theta=(\theta_i)$.
Among them, the Sobolev class is represented by infinite-dimensional ellipsoids under the Fourier expansion.
For such ellipsoids, Pinsker \cite{Pinsker} showed that a linear estimator attains asymptotic minimaxity and derived its concrete form.
Pinsker's estimator for the Sobolev ellipsoid depends on its smoothness and scale parameters, which are generally unknown. 
Thus, estimators that are adaptive minimax over the Sobolev ellipsoids have been developed.
Among them, the blockwise Jame--Stein estimator \cite{Efromovich} partitions the observations into blocks and apply the James--Stein shrinkage to each block.
Adaptive minimaxity of this estimator is proved by using an oracle inequality for the James--Stein estimator, which indicates that the James--Stein estimator attains almost the same risk with the best linear estimator.
More details will be given in Section~\ref{sec_pre}.

In this study, we generalize the above results to $p \geq 2$, which corresponds to multivariate versions of nonparametic problems such as nonparametric regression with vector response.
Unlike the case of $p=1$, there is an arbitrariness in the choice of the positive definite matrix $Q$ in the quadratic loss. 
We introduce a multivariate version of the Sobolev ellipsoid, and develop an estimator that attains adaptive minimaxity over the multivariate Sobolev ellipsoids under any choice of $Q$ simultaneously.
In this sense, our estimator is adaptive not only to smoothness but also to arbitrary quadratic loss.
Note that, while minimax estimation under arbitrary quadratic loss has been studied in the finite-dimensional (parametric) setting as well \cite{Berger,Strawderman}, these previous studies proposed minimax estimators that depend on $Q$. 
In contrast, we develop an estimator that attains adaptive minimaxity under any quadratic loss simultaneously.
Note that applying the blockwise James--Stein estimator for each of the $p$ components only attains adaptive minimaxity for diagonal $Q$.

Our estimator is based on the Efron--Morris estimator \cite{Efron72}, which was originally developed as an empirical Bayes estimator of a normal mean matrix.
Efron and Morris \cite{Efron72} showed that this estimator is minimax and dominates the maximum likelihood estimator under the Frobenius loss.
The Efron--Morris estimator shrinks the singular values towards zero and can be viewed as a matrix generalization of the James--Stein estimator.
It attains large risk reduction when the truth is close to low rank.
Recently, the Efron--Morris estimator has been shown to dominate the maximum likelihood estimator even under the matrix quadratic loss, which is a matrix-valued loss function suitable for matrix estimation \cite{Matsuda22}.
This result implies that the Efron--Morris estimator attains improved estimation of any linear combination of the column vectors simultaneously.
In other words, it is adaptive to arbitrary quadratic loss.
More details will be given in Section~\ref{sec_em}.

This paper is organized as follows.
In Section~\ref{sec_pre}, we briefly review existing results on adaptive estimation in the Gaussian sequence model.
In Section~\ref{sec_em}, we introduce the Efron--Morris estimator and derive its oracle inequality.
In Section~\ref{sec_seq}, we define the multivariate Gaussian sequence model and multivariate Sobolev ellipsoid.
In Section~\ref{sec_pinsker}, we derive an asymptotically minimax estimator on the multivariate Sobolev ellipsoid.
In Section~\ref{sec_bem}, we prove the adaptive minimaxity of the blockwise Efron--Morris estimator over the multivariate Sobolev ellipsoids.
In Section~\ref{sec_simulation}, we present simulation results.
In Section~\ref{sec_concl}, we give concluding remarks.

Our primary contribution is threefold: (1) We introduce a multivariate version of the Gaussian sequence model and Soboloev ellipsoid, (2) We derive an oracle inequality for the Efron--Morris estimator, (3) We show that the blockwise Efron--Morris estimator attains adaptive minimaxity over the multivariate Sobolev ellipsoids.
To the best of our knowledge, adaptive estimation in the multivariate Gaussian sequence model has not been investigated so far.

Throughout the paper, we write $a \sim b$ when $a/b \to 1$. 
For a real number $a$, we define $(a)_+=\max(0,a)$.
For a symmetric matrix $S \in \mathbb{R}^{p \times p}$ that has a spectral decomposition $S=U^{\top} \Lambda U$ with $U \in O(p)$ and $\Lambda={\rm diag} (\lambda_1,\dots,\lambda_p)$, we write its projection onto the positive semidefinite cone by $S_+=U^{\top} \Lambda_+ U \succeq O$, where $\Lambda_+={\rm diag} ((\lambda_1)_+,\dots,(\lambda_p)_+)$.
For two symmetric matrices $A \in \mathbb{R}^{p \times p}$ and $B \in \mathbb{R}^{p \times p}$, we write $A \preceq B$ when $B-A$ is positive semidefinite (L\"{o}wner order).
For a vector-valued function $f: [0,1] \to \mathbb{R}^p$, we write each component of $f$ as $f_j: [0,1] \to \mathbb{R}$ for $j=1,\dots,p$.
For $\beta \in \{ 1,2,\cdots \}$, we define $f^{(\beta)}: [0,1] \to \mathbb{R}^p$ by $f^{(\beta)} (x)=(f_1^{(\beta)} (x),\dots,f_p^{(\beta)} (x))^{\top}$ where $f_j^{(\beta)}$ is the $\beta$-th derivative of $f_j$.

\section{Preliminaries}\label{sec_pre}	
\subsection{Gaussian sequence model}
The Gaussian sequence model \citep{Johnstone,Tsybakov} is defined as
\begin{align}
	y_i = \theta_i + \varepsilon \xi_i, \quad i=1,2,\cdots, \label{sequence}
\end{align}
where $\varepsilon>0$ and $\xi_i \sim {\rm N} (0,1)$ are independent standard Gaussian random variables.
This model is equivalent to the Gaussian white noise model, which is defined by the stochastic differential equation
\begin{align*}
	{\rm d} Y(t) = f(t) {\rm d} t + \varepsilon {\rm d} W(t), \quad t \in [0,1],
\end{align*}
where $f(t)$ is a drift function to be estimated and $W(t)$ is the standard Wiener process.
By using an orthonormal basis $\phi=(\phi_i)$ of $L_2 [0,1]$ such as the Fourier basis, let
\begin{align*}
	y_i = \int_0^1 \phi_i(t) {\rm d} Y(t), \quad \theta_i = \int_0^1 \phi_i(t) f (t) {\rm d} t, \quad \xi_i = \int_0^1 \phi_i(t) {\rm d} W (t).
\end{align*}
Then, $y=(y_i)$ follows the Gaussian sequence model \eqref{sequence}.
In addition, the Gaussian nonparametric regression model
\begin{align*}
	y_i = f(t_i) + \xi_i, \quad i=1,\dots,n,
\end{align*}
where $f:[0,1] \to \mathbb{R}$, $t_i=i/n$ and $\xi_i \sim {\rm N} (0,1)$ are independent, is asymptotically equivalent to the Gaussian sequence model \eqref{sequence} with $\varepsilon = n^{-1/2}$ as $n \to \infty$ \citep{Brown} in the sense of Le Cam's limits of experiments \cite{Gine}.
Asymptotic equivalence to other problems such as density estimation  and Poisson processes has been also established \cite{Nussbaum,Brown04}.
Thus, the Gaussian sequence model \eqref{sequence} is a canonical model in nonparametric statistics and many studies have investigated the problem of estimating $\theta=(\theta_i)$ from $y=(y_i)$ \citep{Cai12,Johnstone,Tsybakov}.

\subsection{Sobolev ellipsoid and Pinsker's theorem}
In nonparametric statistics, unknown functions are often assumed to belong to some smoothness class.
Among them, the periodic Sobolev class on $[0,1]$ is defined as
\begin{align*}
	W (\beta,L) :=  \left\{ f \in L_2[0,1] \relmiddle| \right. & f^{(k)} (0) = f^{(k)} (1), \ k=0,\cdots,\beta-1, \\
	& \left. f^{(\beta - 1)} : {\rm absolutely\ continuous}, \ \int_0^1 (f^{(\beta)} (x))^2 {\rm d} x \leq L^2 \right\},
\end{align*}
where $\beta \in \{ 1,2,\cdots \}$, $L>0$, $L_2[0,1]$ is the space of real-valued square-integrable functions on $[0,1]$ and $f^{(k)}$ denotes the $k$-th order derivative of $f$.
It has a useful representation using the Fourier coefficients as follows \citep{Johnstone,Tsybakov}.

\begin{lemma}\label{lem_sobolev} 
	For $\theta=(\theta_i)$, let
	\begin{align*}
		f (x) = \sum_{i=1}^{\infty} \theta_{i} \phi_i(x),
	\end{align*}
	where $\phi=(\phi_i)$ is the Fourier basis of $L_2 [0,1]$ given by
	\begin{align}
		\phi_1(x) \equiv 1, \ \phi_{2k}(x) = \sqrt{2} \cos (2 \pi k x), \ \phi_{2k+1}(x) = \sqrt{2} \sin (2 \pi k x), \ k=1,2,\dots. \label{fourier}
	\end{align}
	Then, $f \in W (\beta,L)$ if and only if 
	\begin{align}
		\theta \in \Theta (\beta,R) := \left\{ \theta = (\theta_1,\theta_2,\dots) \relmiddle| \sum_{i=1}^{\infty} a_{\beta,i}^2 \theta_i^2 \leq R \right\}, \label{ellipsoid}
	\end{align}
	where $R:=L^2/\pi^{2 \beta}$ and
	\begin{align}\label{sobolev_coeff}
		a_{\beta,i} := \begin{cases} i^{\beta}, & i : {\rm even}, \\ (i-1)^{\beta}, & i : {\rm odd}. \end{cases}
	\end{align}
\end{lemma}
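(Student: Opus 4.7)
The plan is to apply Parseval's identity to $f^{(\beta)}$ after computing how iterated differentiation acts on the Fourier basis \eqref{fourier}. The key computation is $\phi_{2k}'(x) = -2\pi k\,\phi_{2k+1}(x)$, $\phi_{2k+1}'(x) = 2\pi k\,\phi_{2k}(x)$, and $\phi_1'\equiv 0$. Iterating $\beta$ times shows that for $i\geq 2$,
\begin{align*}
\phi_i^{(\beta)} = \varepsilon_i\,(2\pi k)^\beta\,\phi_{\sigma(i)}, \qquad k=\lfloor i/2\rfloor,
\end{align*}
where $\sigma$ swaps $2k\leftrightarrow 2k+1$ when $\beta$ is odd and is the identity when $\beta$ is even, and the sign $\varepsilon_i\in\{\pm1\}$ depends only on $\beta\bmod 4$. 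In particular, $\|\phi_i^{(\beta)}\|_2 = (2k)^\beta \pi^\beta = a_{\beta,i}\pi^\beta$ under the indexing \eqref{sobolev_coeff}, and $\phi_1^{(\beta)}=0$ corresponds to $a_{\beta,1}=0$.

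For the forward direction, assume $f\in W(\beta,L)$. Using the periodic boundary conditions $f^{(k)}(0)=f^{(k)}(1)$ for $k=0,\dots,\beta-1$ together with absolute continuity of $f^{(\beta-1)}$, I would integrate by parts $\beta$ times to obtain, for every $i\geq 2$,
\begin{align*}
\int_0^1 \phi_i(x)\,f^{(\beta)}(x)\,\mathrm{d}x \;=\; (-1)^\beta\!\int_0^1 \phi_i^{(\beta)}(x)\,f(x)\,\mathrm{d}x \;=\; (-1)^\beta\varepsilon_i\,a_{\beta,i}\pi^\beta\,\theta_{\sigma(i)},
\end{align*}
all boundary terms vanishing by periodicity; the $i=1$ coefficient is $0$. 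These are the Fourier coefficients of $f^{(\beta)}\in L_2[0,1]$, so Parseval yields
\begin{align*}
\int_0^1 \bigl(f^{(\beta)}(x)\bigr)^2\,\mathrm{d}x \;=\; \pi^{2\beta}\sum_{i=1}^\infty a_{\beta,i}^2\,\theta_i^2,
\end{align*}
so the Sobolev constraint $\|f^{(\beta)}\|_2^2\leq L^2$ is equivalent to $\sum_i a_{\beta,i}^2\theta_i^2\leq L^2/\pi^{2\beta}=Q$.

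For the converse, given $\theta\in\Theta(\beta,Q)$, set $f=\sum_i \theta_i\phi_i$, which lies in $L_2[0,1]$ because $a_{\beta,i}\to\infty$ forces $\sum\theta_i^2<\infty$. Define $g=\sum_{i\geq 2}(-1)^\beta\varepsilon_i\,a_{\beta,i}\pi^\beta\,\theta_{\sigma(i)}\,\phi_i$, which converges in $L_2[0,1]$ with $\|g\|_2^2\leq \pi^{2\beta}Q=L^2$. Integrating the Fourier series term by term using the antiderivative relations from the first paragraph yields candidate series for $f',\dots,f^{(\beta-1)}$ whose partial sums converge uniformly on $[0,1]$, by Cauchy--Schwarz combined with $\sum_{i\geq 2} a_{j,i}^{-2}<\infty$ for $1\leq j\leq \beta-1$. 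The standard term-by-term differentiation theorem then identifies these series with the genuine derivatives of $f$, shows that each satisfies the periodic boundary condition (inherited from every $\phi_i$), and gives $(f^{(\beta-1)})'=g$ almost everywhere with $f^{(\beta-1)}$ absolutely continuous. Hence $f\in W(\beta,L)$.

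The main obstacle is the converse: rigorously upgrading the formal antiderivative series to classical (or absolutely continuous) derivatives rather than mere $L_2$ limits. The forward direction is essentially bookkeeping in integration by parts plus Parseval; the only subtleties there are the signs $\varepsilon_i$ and the index swap $\sigma$, neither of which affects the final quadratic form.
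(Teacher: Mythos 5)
Your proposal is correct and follows essentially the same route as the paper, which proves this statement (in its multivariate form, Proposition~\ref{prop_sobolev}) in the Appendix: the same derivative relations $\phi_{2k}'=-2\pi k\,\phi_{2k+1}$, $\phi_{2k+1}'=2\pi k\,\phi_{2k}$, Parseval/orthonormality for the norm identity, and for the converse the same combination of Cauchy--Schwarz uniform convergence for the derivatives up to order $\beta-1$ and termwise integration of the $L_2$-convergent series for $f^{(\beta)}$ to get absolute continuity of $f^{(\beta-1)}$. Your forward direction via $\beta$-fold integration by parts is just a slightly more explicit justification of the paper's termwise-differentiated series \eqref{fourier_deriv}, and the only loose phrase (attributing $(f^{(\beta-1)})'=g$ a.e.\ to the term-by-term differentiation theorem rather than to the termwise integration you already invoke) is exactly the point you flag and the paper handles the same way.
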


From Lemma~\ref{lem_sobolev}, the infinite-dimensional ellipsoid $\Theta (\beta,R)$ in \eqref{ellipsoid} is called the Sobolev ellipsoid for $\beta>0$ and $R>0$.

Now, consider the problem of estimating $\theta=(\theta_i)$ from $y=(y_i)$ in the Gaussian sequence model \eqref{sequence} under the quadratic loss
\begin{align}
	L(\theta,\hat{\theta}) = \| \hat{\theta}-\theta \|^2 = \sum_{i=1}^{\infty} (\hat{\theta}_i-\theta_i)^2, \label{qloss}
\end{align}
and assume that $\theta$ is restricted to the Sobolev ellipsoid $\Theta(\beta,R)$ in \eqref{ellipsoid}.
Pinsker \citep{Pinsker} studied this problem under the asymptotics $\varepsilon \to 0$ and derived an asymptotically minimax estimator as follows.

\begin{proposition}(Pinsker's theorem for Sobolev ellipsoid)\label{prop_pinsker}
	Let $C_{\mathrm{P},i} = (1-a_{\beta,i} \kappa R)_+$ for $i=1,2,\dots$, where $(a)_+=\max(0,a)$ and $\kappa>0$ is the unique solution of
	\begin{align*}
		\varepsilon^2 \kappa^{-1} \sum_{i=1}^{\infty} a_{\beta,i} (1-a_{\beta,i} \kappa R^{-1})_+ = 1.
	\end{align*}
Then, the linear estimator $\hat{\theta}_{\rm P}=(\hat{\theta}_{\mathrm{P},i})$ with $\hat{\theta}_{\mathrm{P},i} = C_{\mathrm{P},i} y_i$ is asymptotically minimax on the Sobolev ellipsoid $\Theta(\beta,R)$:
	\begin{align*}
		\sup_{\theta \in \Theta(\beta,R)} {\rm E}_{\theta} [ \| \hat{\theta}_{\rm P}-\theta \|^2 ] \sim \inf_{\hat{\theta}} \sup_{\theta \in \Theta(\beta,R)} {\rm E}_{\theta} [ \| \hat{\theta}-\theta \|^2 ] \sim P(\beta,R) \varepsilon^{{4 \beta}/{(2 \beta+1)}}
	\end{align*}
	as $\varepsilon \to 0$, where $\inf$ is taken over all estimators and
	\begin{align*}
		P(\beta,R) := (R (2 \beta+1))^{1/(2 \beta+1)} \left( \frac{\beta}{\beta+1} \right)^{2 \beta/(2 \beta+1)}.
	\end{align*}
\end{proposition}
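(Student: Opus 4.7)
The plan proceeds in three stages following the classical route to Pinsker's theorem. First, I restrict attention to diagonal linear estimators $\hat\theta_i = c_i y_i$ and compute the linear minimax risk $\mathcal{R}_L = \inf_{c}\sup_{\theta \in \Theta(\beta,Q)}\sum_i [(1-c_i)^2\theta_i^2 + c_i^2\varepsilon^2]$. The objective is convex in $c$ and linear (hence concave) in $u_i := \theta_i^2$ on the compact convex set $\{u \geq 0 : \sum_i a_{\beta,i}^2 u_i \leq Q\}$, so Sion's minimax theorem allows me to interchange $\inf$ and $\sup$. For fixed $\theta$ the inner minimum is attained at $c_i = \theta_i^2/(\theta_i^2+\varepsilon^2)$; maximizing the resulting $\sum_i \varepsilon^2 \theta_i^2/(\theta_i^2+\varepsilon^2)$ over the ellipsoid via a Lagrange multiplier yields $u_i = \varepsilon^2(\kappa^{-1} a_{\beta,i}^{-1} - 1)_+$, and hence $c_i = (1-\kappa a_{\beta,i})_+$ at the saddle point, with $\kappa$ determined by the tightness of the ellipsoid constraint. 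This matches the claimed form of $C_{\mathrm{P},i}$ up to the paper's normalization of $\kappa$.

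Second, I determine the asymptotics of $\mathcal{R}_L = \sum_i \varepsilon^2 C_{\mathrm{P},i}$ as $\varepsilon \to 0$. Because $a_{\beta,i} \sim i^\beta$, the summands are supported on an effective range $i \leq N_\varepsilon$ of order $\kappa^{-1/\beta}$, and I replace the sums by the corresponding integrals in $x$. The ellipsoid-tightness condition and the formula for $\mathcal{R}_L$ both reduce to integrals of powers of $x$ on $[0,N_\varepsilon]$. Solving these asymptotically gives $N_\varepsilon$ of order $\varepsilon^{-2/(2\beta+1)}$, $\kappa$ of order $\varepsilon^{2\beta/(2\beta+1)}$, and $\mathcal{R}_L \sim P(\beta,Q)\varepsilon^{4\beta/(2\beta+1)}$, with $P(\beta,Q)$ assembled from the two elementary integrals. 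Substituting back produces the upper bound on $\sup_\theta {\rm E}_\theta\|\hat\theta_{\mathrm{P}}-\theta\|^2$.

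Third, I establish the matching minimax lower bound. I place an independent Gaussian prior $\theta_i \sim {\rm N}(0,\tau_i^2)$ with $\tau_i^2 = \varepsilon^2 C_{\mathrm{P},i}/(1-C_{\mathrm{P},i})$ for $i \leq N_\varepsilon$ and $\tau_i = 0$ otherwise, so that the Bayes estimator under the quadratic loss is exactly $\hat\theta_{\mathrm{P}}$ and the Bayes risk equals $\mathcal{R}_L$. Since this prior is not supported on $\Theta(\beta,Q)$, I contract the variances by a factor $(1-\delta_\varepsilon)^2$ with $\delta_\varepsilon \to 0$ and restrict to the event $\mathcal{A} = \{\sum_i a_{\beta,i}^2 \theta_i^2 \leq Q\}$. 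Because ${\rm E}[\sum_i a_{\beta,i}^2\theta_i^2] < Q$ under the contracted prior and the variance of this sum is $o(Q^2)$, the event $\mathcal{A}$ has probability tending to one, and a direct comparison shows that the Bayes risk of the truncated contracted prior is $(1-o(1))\mathcal{R}_L$. Since the minimax risk on $\Theta(\beta,Q)$ dominates any Bayes risk for a prior supported on $\Theta(\beta,Q)$, this yields the desired lower bound.

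The main obstacle I anticipate lies in this third stage, in the quantitative control of truncation and contraction. One has to verify $\mathrm{Var}(\sum_i a_{\beta,i}^2 \theta_i^2) = o(Q^2)$ under the prior (which reduces to bounding $\sum_i a_{\beta,i}^4 \tau_i^4$ using the scalings from stage two), and then argue that conditioning on $\mathcal{A}$ changes the Bayes risk only by a negligible amount relative to $P(\beta,Q)\varepsilon^{4\beta/(2\beta+1)}$. A clean alternative is to replace the conditional Bayes risk by a Bayes-risk lower bound for the restricted problem in which estimators are constrained to take values in $\Theta(\beta,Q)$, but either route demands a careful accounting of the $o(1)$ corrections so that the upper and lower bounds match sharply.
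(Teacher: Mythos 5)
Your architecture is the classical one and essentially coincides with how the paper proves its multivariate generalization (Theorem~\ref{th_pinsker}, which contains Proposition~\ref{prop_pinsker} as the case $p=1$): upper bound by the risk of the Pinsker filter on the ellipsoid, elementary asymptotics for $\kappa$ and $\varepsilon^2\sum_i C_{\mathrm{P},i}$, and a lower bound via an independent Gaussian prior that is nearly least favorable, shrunk by a factor $\delta<1$, with the mass escaping the ellipsoid controlled and then $\delta\to 1$. Stages one and two are fine (the paper does not even need the Sion saddle-point computation for the upper bound; it simply bounds the bias term via the ellipsoid constraint, but your route is equally valid). Two small wrinkles you should still patch: for $i=1$ one has $a_{\beta,1}=0$, so $C_{\mathrm{P},1}=1$ and your prescription $\tau_1^2=\varepsilon^2C_{\mathrm{P},1}/(1-C_{\mathrm{P},1})$ is undefined; exclude that coordinate (its contribution is $O(\varepsilon^2)$, negligible), as the paper does by setting $\theta_1=0$ in the lower-bound subfamily.

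The genuine gap is in your stage three, exactly where you anticipated trouble: a Chebyshev/variance bound is quantitatively too weak. It is not enough that $\pi(\mathcal{A}^c)\to 0$; after restricting to estimators with values in $\Theta(\beta,Q)$ (the route the paper takes, using that the ellipsoid is closed and convex) the leakage term is bounded by $2(\pi(\mathcal{A}^c))^{1/2}\,{\rm E}_\pi[(\sum_i\theta_i^2)^2]^{1/2}+2M\,\pi(\mathcal{A}^c)$ with $M,\ {\rm E}_\pi[\cdot]=O(1)$, so you need $\pi(\mathcal{A}^c)=o(\varepsilon^{8\beta/(2\beta+1)})$ (and at the very least $o(\varepsilon^{4\beta/(2\beta+1)})$, the size of the main term). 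Under your prior, ${\rm Var}(\sum_i a_{\beta,i}^2\theta_i^2)\asymp\max_i a_{\beta,i}^2\tau_i^2\cdot\sum_i a_{\beta,i}^2\tau_i^2\asymp\varepsilon^2\kappa^{-2}\asymp\varepsilon^{2/(2\beta+1)}$, so Chebyshev only yields $\pi(\mathcal{A}^c)=O(\varepsilon^{2/(2\beta+1)})$, which fails to be $o(\varepsilon^{4\beta/(2\beta+1)})$ for every $\beta\ge 1/2$, i.e.\ for all Sobolev smoothness of interest; the remainder would then swamp the main term, which itself tends to zero. The standard repair, and the one used in the paper, is an exponential deviation inequality for weighted sums of centered $\chi^2_1$ variables (Lemma~\ref{lem_chi2}; Lemma~3.5 of Tsybakov), which gives $\pi(\mathcal{A}^c)=O(\exp(-C\varepsilon^{-2/(2\beta+1)}))$ and makes the leakage negligible; alternatively a sufficiently high-moment Markov bound would do, but the second moment alone does not. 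Relatedly, keeping $\delta$ fixed in $(1/\sqrt2,1)$ and sending $\delta\to1$ only at the end (as in the paper) is cleaner than letting the contraction vanish with $\varepsilon$, since a shrinking margin degrades whatever concentration bound you use.
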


\subsection{Blockwise James--Stein estimator}\label{sec_bjs}
Pinsker's estimator in Proposition~\ref{prop_pinsker} depends on the smoothness parameter $\beta$ and scale parameter $R$ of the Sobolev ellipsoid $\Theta(\beta,R)$ in \eqref{ellipsoid}, which are difficult to specify in general.
Thus, adaptive minimax estimators over the Sobolev ellipsoids have been developed, which are asymptotically minimax on $\Theta(\beta,R)$ for every $\beta$ and $R$ simultaneously.
Among them, here we focus on the blockwise James--Stein estimator \cite{Efromovich,Tsybakov}.

In estimation of $\mu$ from $X \sim {\rm N}_n (\mu,I_n)$ under the quadratic loss, the maximum likelihood estimator $\hat{\mu}_{{\rm ML}}=x$ is minimax \cite{Lehmann}.
However, Stein \citep{Stein74} showed that it is inadmissible when $n \geq 3$ and James and Stein \citep{James} proved that the shrinkage estimator
\begin{align*}
	\hat{\mu}_{\rm JS} (x) = \left( 1-\frac{n-2}{\| x \|^2} \right) x
\end{align*}
is minimax and dominates the maximum likelihood estimator:
\begin{align*}
	{\rm E}_{\mu} [ \| \hat{\mu}_{{\rm JS}}-\mu \|^2 ] \leq {\rm E}_{\mu} [ \| \hat{\mu}_{{\rm ML}}-\mu \|^2 ] = n.
\end{align*}
The James--Stein estimator also satisfies the following oracle inequality \cite{Johnstone,Tsybakov}.

\begin{lemma}\label{lem_js}
	Let $\hat{\mu}_{C} = C x$ be the linear estimator of $\mu$ for $C \in \mathbb{R}$.
	Then, \begin{align}
		\min_{C} {\rm E}_{\mu} [ \| \hat{\mu}_{C}-\mu \|^2 ] \leq {\rm E}_{\mu} [ \| \hat{\mu}_{{\rm JS}}-\mu \|^2 ] \leq \min_{C} {\rm E}_{\mu} [ \| \hat{\mu}_{C}-\mu \|^2 ] + 2, \label{js_oracle}
	\end{align}
	for every $\mu$.
\end{lemma}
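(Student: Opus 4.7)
The plan is to reduce both inequalities to two-sided estimates on the single inverse moment ${\rm E}_\mu[\|X\|^{-2}]$ where $X\sim{\rm N}_n(\mu,I_n)$. First I would compute the two sides explicitly. On the linear-oracle side, ${\rm E}_\mu[\|CX-\mu\|^2] = C^2(n+\|\mu\|^2) - 2C\|\mu\|^2 + \|\mu\|^2$ is quadratic in $C$ and is minimized at $C^* = \|\mu\|^2/(n+\|\mu\|^2)$ with value $n\|\mu\|^2/(n+\|\mu\|^2)$. On the JS side, expanding $\|\hat{\mu}_{\rm JS}-\mu\|^2 = \|X-\mu-(n-2)X/\|X\|^2\|^2$ and applying Stein's identity (Gaussian integration by parts) to the cross term ${\rm E}[(X-\mu)^\top X/\|X\|^2] = (n-2)\,{\rm E}[\|X\|^{-2}]$ yields the SURE-type identity ${\rm E}_\mu[\|\hat{\mu}_{\rm JS}-\mu\|^2] = n - (n-2)^2\,{\rm E}[\|X\|^{-2}]$. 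Writing $t=\|\mu\|^2$, the two inequalities in \eqref{js_oracle} then reduce to the single two-sided bound
\begin{equation*}
\frac{n^2}{n+t} - 2 \;\le\; (n-2)^2\,{\rm E}[\|X\|^{-2}] \;\le\; \frac{n^2}{n+t}.
\end{equation*}

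Both sides I would handle via the Poisson-mixture representation of the noncentral chi-squared distribution: conditional on $K\sim{\rm Poisson}(t/2)$, $\|X\|^2\sim\chi^2_{n+2K}$, so that ${\rm E}[\|X\|^{-2}] = {\rm E}_K[(n+2K-2)^{-1}]$. For the right-hand inequality (the usual upper bound on the JS risk), I would apply Jensen to the convex function $k\mapsto(n+2k-2)^{-1}$, giving ${\rm E}[\|X\|^{-2}] \ge (n+t-2)^{-1}$; plugging this in and clearing denominators reduces the desired estimate to the immediate inequality $2t^2 \ge 0$.

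The left-hand inequality is the more delicate direction, since Jensen points the wrong way and I instead need the upper bound ${\rm E}[\|X\|^{-2}] \le n/((n-2)(n+t))$. My plan is to derive this by applying the Stein identity for the Poisson law, ${\rm E}[Kf(K)] = (t/2)\,{\rm E}[f(K+1)]$, to $f(k)=(n+2k-2)^{-1}$. A short rearrangement then produces the exact identity
\begin{equation*}
(n-2)(n+t)\,{\rm E}[\|X\|^{-2}] - n \;=\; -4t\,{\rm E}\!\left[\frac{K}{(n+2K)(n+2K-2)}\right] \;\le\; 0,
\end{equation*}
whose right side is nonpositive because $K\ge 0$. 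This yields $(n-2)^2\,{\rm E}[\|X\|^{-2}] \le n(n-2)/(n+t) \le n^2/(n+t)$, giving the left inequality. The main obstacle is precisely this step: whereas the right inequality needs only a generic convexity bound, the left inequality forces one to exploit the specific Poisson-mixture structure of the noncentral chi-squared together with a Stein-type manipulation for Poisson variables in order to obtain the tight constant, since any bound coming only from Jensen on $\|X\|^2$ itself is off by a factor roughly $2$.
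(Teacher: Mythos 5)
Your proposal is correct, and I checked the two steps that carry all the weight: the reduction of \eqref{js_oracle} to the two-sided bound on $(n-2)^2{\rm E}[\|X\|^{-2}]$ is exact (the linear-oracle risk is indeed $n\|\mu\|^2/(n+\|\mu\|^2)$ and the SURE identity gives the JS risk exactly), the right-hand inequality does reduce after clearing denominators to $2t^2\ge 0$, and your Poisson--Stein identity is verified: with $K\sim{\rm Poisson}(t/2)$ and ${\rm E}[Kf(K)]=(t/2)\,{\rm E}[f(K+1)]$ one gets
\begin{align*}
(n-2)(n+t)\,{\rm E}\!\left[\tfrac{1}{n+2K-2}\right]-n=-4t\,{\rm E}\!\left[\tfrac{K}{(n+2K)(n+2K-2)}\right]\le 0,
\end{align*}
so ${\rm E}[\|X\|^{-2}]\le n/\{(n-2)(n+t)\}$ and the left inequality follows. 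Note that the paper does not prove Lemma~\ref{lem_js} at all; it is quoted from \cite{Johnstone,Tsybakov}, and the upper-bound half of your argument (Poisson-mixture representation plus Jensen over $K$, giving ${\rm E}[\|X\|^{-2}]\ge (n+t-2)^{-1}$) is precisely the classical proof the paper alludes to when discussing its conjecture about improving $2(p+1)$ in \eqref{em_oracle}. Your treatment of the left inequality via the Poisson Stein identity is a nice self-contained route to the inverse-moment upper bound, which the classical sources usually import from known noncentral chi-square facts. It is also worth observing that your argument is genuinely sharper than the paper's own technique for the matrix case: the proof of Theorem~\ref{th_oracle} uses only operator convexity of the inverse (${\rm E}[(X^\top X)^{-1}]\succeq({\rm E}[X^\top X])^{-1}$) plus $(n-p-1)^2\ge n^2-2n(p+1)$, which at $p=1$ yields the constant $4$ rather than $2$; the Poisson-mixture refinement is exactly what buys the sharp constant, which is why the paper's conjectured improvement of \eqref{em_oracle} would require a matrix analogue of your step. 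One small caveat: your proof (and the exact risk formula, since ${\rm E}[\|X\|^{-2}]=\infty$ for $n\le 2$) implicitly assumes $n\ge 3$, which is consistent with how the lemma is used in the blockwise construction, where blocks of size less than $3$ are left unshrunk, but you should state it.
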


The oracle inequality \eqref{js_oracle} indicates that the James--Stein estimator attains almost the same risk with the best linear estimator.
Then, since Pinsker's estimator in Proposition~\ref{prop_pinsker} is a linear estimator, it is expected that an estimator that is asymptotically minimax on every Sobolev ellipsoid can be constructed by using the James--Stein shrinkage.
This idea leads to the blockwise James--Stein estimator for the Gaussian sequence model \eqref{sequence} as follows.

Consider a partition of $\{ 1,2,\dots, N \}$ with blocks
\begin{align*}
	B_j := \{ l_{j-1}+1, l_{j-1}+2,\dots, l_j \}, \quad j=1,\dots,J,
\end{align*}
where $0=l_0 < l_1 < \dots < l_J=N$.
The blockwise James--Stein estimator is defined as
\begin{align}
	(\hat{\theta}_{\mathrm{BJS}} (y))_i = \begin{cases} y_i, & i \in B_j, \ |B_j| < 3, \\ \left( 1-\frac{(|B_j|-2) \varepsilon^2}{\| y_{(j)}\|^2} \right) y_i, & i \in B_j, \ |B_j| \geq 3, \\ 0, & i > N, \end{cases} \label{BJS}
\end{align}
where $\| y_{(j)} \|^2 := \sum_{i \in B_j} y_i^2$.
This estimator applies the James--Stein shrinkage to each block $B_j$.
In this study, we focus on the weakly geometric blocks defined by $N := \lfloor \varepsilon^{-2} \rfloor$ and \begin{align*}
	l_j := \lfloor \rho_{\varepsilon}^{-1} (1+\rho_{\varepsilon})^{j} \rfloor, \quad j=1,\dots,J-1, \end{align*}
where $\rho_{\varepsilon}:=(\log (1/\varepsilon))^{-1}$ and $J:=\min \{ j \mid \rho_{\varepsilon}^{-1} (1+\rho_{\varepsilon})^{j} \geq \varepsilon^{-2} \}$.
The blockwise James--Stein estimator with the weakly geometric blocks attains adaptive minimaxity as follows \cite{Efromovich,Tsybakov}.

\begin{proposition}\label{prop_wgb}
	For every $\beta$ and $R$, the blockwise James--Stein estimator $\hat{\theta}_{\rm BJS}$ with the weakly geometric blocks is asymptotically minimax on the Sobolev ellipsoid $\Theta(\beta,R)$: 
	\begin{align*}
		\sup_{\theta \in \Theta(\beta,R)} {\rm E}_{\theta} [ \| \hat{\theta}_{\mathrm{BJS}}-\theta \|^2 ] \sim \inf_{\hat{\theta}} \sup_{\theta \in \Theta(\beta,R)} {\rm E}_{\theta} [ \| \hat{\theta}-\theta \|^2 ] \sim P(\beta,R) \varepsilon^{{4 \beta}/{(2 \beta+1)}}
	\end{align*}
	as $\varepsilon \to 0$, where $\inf$ is taken over all estimators.
\end{proposition}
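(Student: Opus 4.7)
The plan is to prove the upper bound $\sup_{\theta\in\Theta(\beta,Q)} R(\theta,\hat\theta_{\mathrm{BJS}}) \leq (1+o(1)) P(\beta,Q)\varepsilon^{4\beta/(2\beta+1)}$; the matching lower bound is free from Pinsker's theorem (Proposition~\ref{prop_pinsker}), since it applies to \emph{every} estimator.

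First I would apply the oracle inequality of Lemma~\ref{lem_js} block by block. Within each block $B_j$ with $|B_j|\geq 3$, the observations $y_i/\varepsilon$ for $i\in B_j$ form a Gaussian vector with identity covariance and mean $\theta_i/\varepsilon$, so Lemma~\ref{lem_js} (applied after rescaling) gives
\begin{align*}
\mathrm{E}_\theta\!\left[\sum_{i\in B_j}(\hat\theta_{\mathrm{BJS},i}-\theta_i)^2\right]
\leq \min_{C\in\mathbb{R}}\sum_{i\in B_j}\mathrm{E}_\theta[(Cy_i-\theta_i)^2] + 2\varepsilon^2.
\end{align*}
Summing over blocks and adding the contributions from the tail $i>N$ (where $\hat\theta_{\mathrm{BJS},i}=0$) and from blocks of size less than $3$, I would obtain
\begin{align*}
\mathrm{E}_\theta[\|\hat\theta_{\mathrm{BJS}}-\theta\|^2]
\leq \sum_{j=1}^{J}\min_{C_j}\sum_{i\in B_j}\mathrm{E}_\theta[(C_j y_i-\theta_i)^2] + 2J\varepsilon^2 + \sum_{i>N}\theta_i^2 + O(\varepsilon^2).
\end{align*}
The remainder terms are easy: $J=O((\log(1/\varepsilon))^2)$ by the definition of the weakly geometric blocks, so $J\varepsilon^2$ is negligible; and $\sum_{i>N}\theta_i^2 \leq a_{\beta,N+1}^{-2} Q \lesssim \varepsilon^{4\beta}Q$ by the Sobolev constraint, which is also negligible against $\varepsilon^{4\beta/(2\beta+1)}$.

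The substantive step is to show that the leading sum of the ideal blockwise linear risks is itself asymptotically bounded by $P(\beta,Q)\varepsilon^{4\beta/(2\beta+1)}$, uniformly over $\theta\in\Theta(\beta,Q)$. Here I would compare the best blockwise-constant linear estimator with the coordinatewise Pinsker estimator $\hat\theta_{\mathrm{P}}$. Writing the ideal block constant explicitly, $\min_{C}\sum_{i\in B_j}\mathrm{E}_\theta[(Cy_i-\theta_i)^2]= \sum_{i\in B_j}\theta_i^2\varepsilon^2/(\sum_{i\in B_j}\theta_i^2+|B_j|\varepsilon^2)\cdot$-type expression, and the corresponding Pinsker contribution within $B_j$ is $\sum_{i\in B_j}[(1-C_{\mathrm{P},i})^2\theta_i^2+C_{\mathrm{P},i}^2\varepsilon^2]$. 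The key fact is that the Pinsker coefficients $C_{\mathrm{P},i}=(1-a_{\beta,i}\kappa Q)_+$ are \emph{nearly constant} on each weakly geometric block, because $a_{\beta,i}/a_{\beta,i'}\to 1$ as $\varepsilon\to 0$ whenever $i,i'$ lie in the same $B_j$ (since $l_j/l_{j-1}=1+\rho_\varepsilon+o(\rho_\varepsilon)$ and $\rho_\varepsilon\to 0$). Quantifying this uniform approximation by $(1+o(1))$ and combining with the explicit form of the Pinsker risk from Proposition~\ref{prop_pinsker} yields
\begin{align*}
\sum_{j=1}^{J}\min_{C_j}\sum_{i\in B_j}\mathrm{E}_\theta[(C_j y_i-\theta_i)^2]
\leq (1+o(1))\sup_{\theta\in\Theta(\beta,Q)}\mathrm{E}_\theta[\|\hat\theta_{\mathrm{P}}-\theta\|^2]
= (1+o(1))P(\beta,Q)\varepsilon^{4\beta/(2\beta+1)}.
\end{align*}

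The main obstacle is exactly this last comparison: controlling the additional error of replacing coordinatewise Pinsker shrinkage by blockwise-constant shrinkage, uniformly in $\theta$ on the ellipsoid, with a $(1+o(1))$ factor rather than a mere constant. The ellipsoid constraint enters crucially to bound the $\theta_i^2$ contributions on tail blocks where $C_{\mathrm{P},i}$ vanishes, and the weakly geometric structure (with $\rho_\varepsilon\to 0$ but $J$ only polylogarithmic in $1/\varepsilon$) is what makes the two objectives — blocks fine enough to track Pinsker, blocks coarse enough that $J\varepsilon^2$ is negligible — simultaneously achievable. Once this is established, combining the upper bound with Pinsker's lower bound closes the argument.
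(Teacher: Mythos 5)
Your overall strategy is the right one, and it is essentially the argument the paper itself uses for the multivariate analogue: the paper states Proposition~\ref{prop_wgb} without proof (citing \cite{Efromovich,Tsybakov}), but its proof of Theorem~\ref{th_bem} follows exactly your outline — blockwise oracle inequality (there Corollary~\ref{cor_oracle}, here Lemma~\ref{lem_js}), absorption of the $2J\varepsilon^2$, small-block and tail terms, and a comparison of the blockwise-constant oracle with the (monotone) Pinsker linear estimator, with the lower bound coming for free from Proposition~\ref{prop_pinsker}. Your bookkeeping of the remainder terms ($J=O(\log^2(1/\varepsilon))$, tail $\leq a_{\beta,N+1}^{-2}Q=O(\varepsilon^{4\beta})$) is correct.

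The one step whose justification would not survive as written is the ``nearly constant Pinsker coefficients'' claim. From $a_{\beta,i}/a_{\beta,i'}=1+O(\rho_\varepsilon)$ within a block you only get that the weights $C_{\mathrm{P},i}=(1-a_{\beta,i}\kappa/Q)_+$ are nearly constant \emph{additively}, not multiplicatively: near the shrinkage cutoff a relative change of order $\rho_\varepsilon$ in $a_{\beta,i}$ can change $C_{\mathrm{P},i}$ by a large factor, so a naive within-block $(1+o(1))$ comparison of the variance terms $C^2\varepsilon^2$ fails there; moreover in $B_1$ the ratio $a_{\beta,i}/a_{\beta,i'}$ is not close to $1$ at all (since $a_{\beta,1}=0$). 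Two standard repairs: (i) keep your comparison but use the additive bound $|C_{\mathrm{P},i}-C_{\mathrm{P},i'}|=O(\rho_\varepsilon)$ on the blocks that matter, together with the fact that only $O(\varepsilon^{-2/(2\beta+1)})$ coordinates carry nonzero weight, so the total variance excess is $O(\rho_\varepsilon\,\varepsilon^{4\beta/(2\beta+1)})$, treating the cutoff block and $B_1$ separately (the latter costs at most $\varepsilon^2|B_1|=\varepsilon^2\log^2(1/\varepsilon)$, negligible); or (ii) use the index-shift device of the paper's Lemma~\ref{lem_block} (Lemma~3.11 of \cite{Tsybakov}): take the block constant $C_j=C_{\mathrm{P},\min B_j}$, so the bias term can only decrease, and bound $|B_j|C_j^2\leq(1+3\rho_\varepsilon)\sum_{i\in B_{j-1}}C_{\mathrm{P},i}^2$ because the constant of block $j$ is dominated by every coefficient in block $j-1$; summing over $j\geq2$ and paying $\varepsilon^2|B_1|$ for the first block gives the clean bound $(1+3\rho_\varepsilon)R(\theta,\hat\theta_{\mathrm{P}})+\varepsilon^2|B_1|$, uniformly in $\theta$. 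With either repair your outline closes and matches the known proof; note also that the ellipsoid constraint is needed only for the tail term and the bias comparison, exactly as you indicate.
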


\section{Efron--Morris estimator and its oracle inequality}\label{sec_em}	
\subsection{Efron--Morris estimator}
Suppose that we have a matrix observation $X \in \mathbb{R}^{n \times p}$ whose entries are independent Gaussian random variables $X_{ij} \sim {\rm N} (M_{ij},1)$, where $n-p-1>0$ and $M \in \mathbb{R}^{n \times p}$ is an unknown mean matrix. 	
In this setting, Efron and Morris \cite{Efron72} considered estimation of $M$ under the Frobenius loss
\begin{align*}
	l(M,\hat{M}) = \| \hat{M} - M \|_{{\rm F}}^2 = \sum_{i=1}^n \sum_{j=1}^p (\hat{M}_{ij} - M_{ij})^2,
\end{align*}
and proposed an estimator
\begin{align}
	\hat{M}_{{\rm EM}} = X ( I_p-(n-p-1) (X^{\top} X)^{-1} ). \label{EM_estimator}
\end{align}
It coincides with the James--Stein estimator when $p=1$.
They showed that $\hat{M}_{{\rm EM}}$ is minimax and dominates the maximum likelihood estimator $\hat{M}_{\rm ML}=X$ under the Frobenius loss:
\begin{align*}
	{\rm E}_M [ \| \hat{M}_{\rm EM} - M \|_{{\rm F}}^2 ] \leq {\rm E}_M [ \| \hat{M}_{\rm ML} - M \|_{{\rm F}}^2 ] = np.
\end{align*}
The Efron--Morris estimator $\hat{M}_{{\rm EM}}$ can be interpreted as an empirical Bayes estimator, in which an independent prior ${\rm N}_p(0,\Sigma)$ is put on each row of $M$ and the hyperparameter $\Sigma$ is estimated from $X$.

Stein \cite{Stein74} pointed out that $\hat{M}_{{\rm EM}}$ does not change the singular vectors but shrinks the singular values of $X$ towards zero.
Namely, let $X=U \Sigma V^{\top}$ be the singular value decomposition of $X$, where $U^{\top} U = V^{\top} V = I_p$ and $\Sigma = {\rm diag} (\sigma_1(X),\dots,\sigma_p(X))$.
Then, the singular value decomposition of $\hat{M}_{{\rm EM}}$ is given by $\hat{M}_{{\rm EM}} = U \widetilde{\Sigma} V^{\top}$, where $\widetilde{\Sigma} = {\rm diag} (\sigma_1(\hat{M}_{{\rm EM}}),\dots,\sigma_p(\hat{M}_{{\rm EM}}))$ and
\begin{align*}
	\sigma_j(\hat{M}_{{\rm EM}}) = \left( 1 - \frac{n-p-1}{\sigma_j(X)^2} \right) \sigma_j(X), \quad j=1, \dots, p.
\end{align*}
From this property, $\hat{M}_{{\rm EM}}$ attains large risk reduction when $M$ has small singular values.
In particular, $\hat{M}_{{\rm EM}}$ works well when $M$ is close to low rank.

Figure~\ref{EMrisk} compares the Frobenius risk of $\hat{M}_{{\rm EM}}$ with that of $\hat{M}_{{\rm JS}}(X)=(1-(np-2)/\| X \|_{\mathrm{F}}^2)X$, which applies the James--Stein shrinkage to the vectorization of $X$.
It indicates that $\hat{M}_{{\rm EM}}$ attains constant risk reduction when some singular values of $M$ are small, regardless of the magnitude of the other singular values.
Thus, $\hat{M}_{{\rm EM}}$ works well for low rank matrices and this advantage is more pronounced in higher dimensions \cite{Matsuda22}.
On the other hand, $\hat{M}_{{\rm JS}}$ works well only when $\| M \|_{\mathrm{F}}^2=\sum_{j=1}^p \sigma_j(M)^2$ is small.

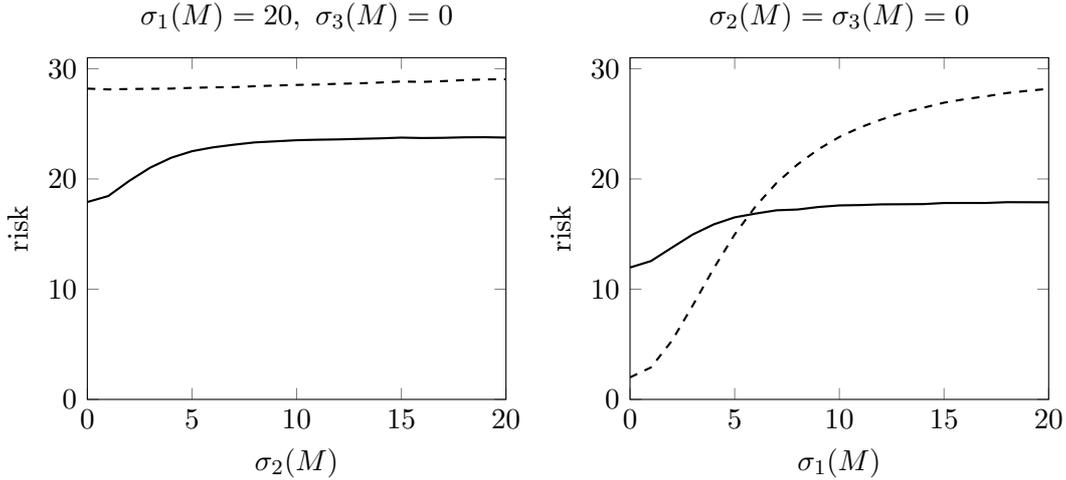
\begin{figure}[h]
	\centering
	\begin{tikzpicture}
		\begin{axis}[
			title={$\sigma_1(M)=20, \ \sigma_3(M)=0$},
			xlabel={$\sigma_2(M)$}, xmin=0, xmax=20,
			ylabel={risk}, ymin=0, ymax=31,
			width=0.45\linewidth
			]
			\addplot[thick, color=black,
			filter discard warning=false, unbounded coords=discard
			] table {
				0   17.9076
				1.0000   18.4515
				2.0000   19.8225
				3.0000   21.0238
				4.0000   21.9198
				5.0000   22.5189
				6.0000   22.8671
				7.0000   23.1092
				8.0000   23.3180
				9.0000   23.4152
				10.0000   23.5154
				11.0000   23.5604
				12.0000   23.5896
				13.0000   23.6394
				14.0000   23.6841
				15.0000   23.7518
				16.0000   23.7142
				17.0000   23.7320
				18.0000   23.7780
				19.0000   23.7865
				20.0000   23.7563
			};
			\addplot[dashed, thick, color=black,
			filter discard warning=false, unbounded coords=discard
			] table {
				0   28.1968
				1.0000   28.1222
				2.0000   28.1633
				3.0000   28.1788
				4.0000   28.1984
				5.0000   28.2664
				6.0000   28.3138
				7.0000   28.3293
				8.0000   28.4149
				9.0000   28.4765
				10.0000   28.5322
				11.0000   28.5645
				12.0000   28.6374
				13.0000   28.6762
				14.0000   28.7490
				15.0000   28.8404
				16.0000   28.8067
				17.0000   28.8744
				18.0000   28.9737
				19.0000   29.0319
				20.0000   29.0449
			};
		\end{axis}
	\end{tikzpicture} 
	\begin{tikzpicture}
		\begin{axis}[
			title={$\sigma_2(M)=\sigma_3(M)=0$},
			xlabel={$\sigma_1(M)$}, xmin=0, xmax=20,
			ylabel={risk}, ymin=0, ymax=31,
			width=0.45\linewidth
			]
			\addplot[thick, color=black,
			filter discard warning=false, unbounded coords=discard
			] table {
				0   11.9638
				1.0000   12.5520
				2.0000   13.7688
				3.0000   14.9642
				4.0000   15.8799
				5.0000   16.5171
				6.0000   16.8603
				7.0000   17.1577
				8.0000   17.2277
				9.0000   17.4562
				10.0000   17.5997
				11.0000   17.6307
				12.0000   17.6948
				13.0000   17.7059
				14.0000   17.7182
				15.0000   17.8189
				16.0000   17.8218
				17.0000   17.8212
				18.0000   17.8935
				19.0000   17.8879
				20.0000   17.8848
			};
			\addplot[dashed, thick, color=black,
			filter discard warning=false, unbounded coords=discard
			] table {
				0    1.9988
				1.0000    2.9021
				2.0000    5.3163
				3.0000    8.5374
				4.0000   11.8932
				5.0000   14.9668
				6.0000   17.5222
				7.0000   19.6378
				8.0000   21.3152
				9.0000   22.6927
				10.0000   23.8099
				11.0000   24.6813
				12.0000   25.3877
				13.0000   25.9878
				14.0000   26.4560
				15.0000   26.9210
				16.0000   27.2408
				17.0000   27.4971
				18.0000   27.7964
				19.0000   28.0009
				20.0000   28.1781
			};
		\end{axis}
	\end{tikzpicture} 
	\caption{Frobenius risk of the Efron--Morris estimator (solid) and the James--Stein estimator (dashed) for $n=10$ and $p=3$. Left: $\sigma_1(M)=20$, $\sigma_3(M)=0$. Right: $\sigma_2(M)=\sigma_3(M)=0$}
	\label{EMrisk}
\end{figure}

\subsection{Matrix quadratic loss}
Instead of the Frobenius loss, it is insightful to investigate estimation of $M$ under the {matrix quadratic loss} 
\begin{align}
	L(M,\hat{M}) = (\hat{M} - M)^{\top} (\hat{M} - M), \label{mat_loss}
\end{align}
which takes a value in the set of $p \times p$ positive semidefinite matrices \cite{Matsuda22}.
Under this loss, an estimator $\hat{M}_1$ is said to dominate another estimator $\hat{M}_2$ if 
\begin{align*}
	{\rm E}_M [ (\hat{M}_1 - M)^{\top} (\hat{M}_1 - M) ] \preceq {\rm E}_M [ (\hat{M}_2 - M)^{\top} (\hat{M}_2 - M) ]
\end{align*}
for every $M$, where $\preceq$ is the L\"{o}wner order: $A \preceq B$ means that $B-A$ is positive semidefinite.
Thus, if $\hat{M}_1$ dominates $\hat{M}_2$ under the matrix quadratic loss, then
\begin{align*}
	{\rm E}_M [ \| (\hat{M}_1 - M) u \|^2 ] \leq {\rm E}_M [ \| (\hat{M}_2 - M) u \|^2 ]
\end{align*}
for every $M$ and $u \in \mathbb{R}^p$.
In other words, $\hat{M}_1$ dominates $\hat{M}_2$ in estimating any linear combination of the columns of $M$ under the quadratic loss.
In particular, each column of $\hat{M}_1$ dominates that of $\hat{M}_2$ as an estimator of the corresponding column of $M$ under the quadratic loss.
Domination under the matrix quadratic loss is stronger than domination under the Frobenius loss, because the Frobenius loss is equal to the trace of the matrix quadratic loss: $l(M,\hat{M}) = {\rm tr} (L(M,\hat{M}))$.

The Efron--Morris estimator dominates the maximum likelihood estimator even under the {matrix quadratic loss} as follows \cite{Matsuda22}.

\begin{lemma}When $n-p-1>0$, the matrix quadratic risk of the Efron--Morris estimator $\hat{M}_{{\rm EM}}$ in \eqref{EM_estimator} is 
	\begin{align}\label{EM_risk}
		{\rm E}_M [(\hat{M}_{{\rm EM}} - M)^{\top} (\hat{M}_{{\rm EM}} - M)] = n I_p - (n-p-1)^2 {\rm E}_M [(X^{\top} X)^{-1}] \preceq n I_p.
	\end{align}
	Thus, the Efron--Morris estimator dominates the maximum likelihood estimator under the matrix quadratic loss.
\end{lemma}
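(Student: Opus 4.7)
The plan is to expand the quadratic form $(\hat M_{\rm EM}-M)^\top(\hat M_{\rm EM}-M)$, take expectation, and evaluate the only non-trivial piece---a cross term---by Stein's integration-by-parts identity for Gaussian integrals. Writing $Z = X-M$, so that the entries $Z_{ij}$ are independent ${\rm N}(0,1)$, one obtains
\begin{align*}
(\hat M_{\rm EM}-M)^\top(\hat M_{\rm EM}-M) = Z^\top Z - (n-p-1)\{Z^\top X(X^\top X)^{-1} + (X^\top X)^{-1}X^\top Z\} + (n-p-1)^2 (X^\top X)^{-1}.
\end{align*}
The first summand has expectation $nI_p$ by direct moment computation, and the last is already in closed form once the expectation is taken.

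The crux is to show that ${\rm E}_M[Z^\top X(X^\top X)^{-1}] = (n-p-1)\,{\rm E}_M[(X^\top X)^{-1}]$. Entrywise, $[Z^\top X(X^\top X)^{-1}]_{kl} = \sum_i Z_{ik}[X(X^\top X)^{-1}]_{il}$, and Stein's identity (valid because each $Z_{ik}$ is standard Gaussian and $X$ is an affine function of $Z$, with moment conditions guaranteed by $n-p-1>0$) yields
\begin{align*}
{\rm E}_M\bigl[Z_{ik}[X(X^\top X)^{-1}]_{il}\bigr] = {\rm E}_M\bigl[\partial [X(X^\top X)^{-1}]_{il}/\partial X_{ik}\bigr].
\end{align*}
I would then compute the divergence $\sum_i \partial[X(X^\top X)^{-1}]_{il}/\partial X_{ik}$ by matrix calculus. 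Setting $W=X^\top X$ and using $\partial W_{bc}/\partial X_{ik} = \delta_{bk}X_{ic}+X_{ib}\delta_{ck}$ together with $\partial W^{-1}/\partial X_{ik} = -W^{-1}(\partial W/\partial X_{ik})W^{-1}$, the sum decomposes into three pieces: an $n(W^{-1})_{kl}$ term from differentiating $X_{ia}$ itself; a $-(W^{-1})_{kl}$ term arising from $\sum_i [XW^{-1}]_{ik}[XW^{-1}]_{il} = [W^{-1}X^\top XW^{-1}]_{kl} = (W^{-1})_{kl}$; and a $-p(W^{-1})_{kl}$ term arising from $\sum_i \sum_a X_{ia}[XW^{-1}]_{ia} = {\rm tr}(W^{-1}W) = p$. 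These combine to exactly $(n-p-1)(W^{-1})_{kl}$.

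Collecting the three expectations, the cross term contributes $-2(n-p-1)^2 {\rm E}_M[(X^\top X)^{-1}]$, which combines with the $+(n-p-1)^2 {\rm E}_M[(X^\top X)^{-1}]$ from the quadratic term to yield the stated identity. The L\"owner inequality $\preceq nI_p$ is then immediate since $(X^\top X)^{-1}$ is positive definite almost surely (the Wishart law has full support when $n>p$) and $(n-p-1)^2 \geq 0$. The main obstacle is the bookkeeping in the divergence calculation: one must track carefully which differentiations produce entries of $W^{-1}$ versus the integer factors $n$ and $p$, so as to arrive at precisely the combination $n-p-1$. Once that cancellation is in hand, everything else is routine algebra.
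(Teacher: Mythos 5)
Your derivation is correct: the expansion of $(\hat{M}_{\rm EM}-M)^{\top}(\hat{M}_{\rm EM}-M)$, the Stein-identity evaluation of the cross term giving ${\rm E}_M[Z^{\top}X(X^{\top}X)^{-1}]=(n-p-1){\rm E}_M[(X^{\top}X)^{-1}]$ via the divergence count $n-1-p$, and the final cancellation all check out, and the integrability caveats you flag are exactly what $n-p-1>0$ guarantees. The paper itself does not prove this lemma but imports it from Matsuda and Strawderman (2022), whose argument is the same matrix Stein-identity computation, so your proof is essentially the standard one.
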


The matrix quadratic risk \eqref{EM_risk} of $\hat{M}_{{\rm EM}}$ at $M=O$ is $(p+1) I_p$ \cite{Matsuda22}.
Here, we extend this result by using the recent result by \cite{Hillier} on the expectation of the inverse of a noncentral Wishart matrix.

\begin{proposition}\label{prop_em}
	The matrix quadratic risk of the Efron--Morris estimator $\hat{M}_{{\rm EM}}$ in \eqref{EM_estimator} has the same eigenvectors with $M^{\top} M$.
	If $M^{\top} M$ has a zero eigenvalue, then the corresponding eigenvalue of the matrix quadratic risk is $p+1$.
\end{proposition}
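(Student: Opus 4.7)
The plan is to reduce both assertions to properties of $\Phi(M) := \mathrm{E}_M[(X^\top X)^{-1}]$, since by the risk identity in the preceding lemma the matrix quadratic risk equals $n I_p - (n-p-1)^2 \Phi(M)$, so both claims are equivalent to statements about $\Phi(M)$.

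For the eigenvector statement, I would exploit the orthogonal invariance of the noncentral Wishart distribution. The law of $X^\top X$ depends on $M$ only through $M^\top M$, and substituting $X \mapsto XV$ for any $V \in O(p)$ gives the covariance identity $\Phi(MV) = V^\top \Phi(M) V$. Specializing to the case where $M^\top M$ is already diagonal and $V = S$ is a sign-change matrix $\mathrm{diag}(\pm 1,\dots,\pm 1)$, one has $(MS)^\top(MS) = M^\top M$, so $\Phi(MS) = \Phi(M) = S \Phi(M) S$. Running through all $2^p$ sign choices forces the off-diagonal entries of $\Phi(M)$ to vanish, hence $\Phi(M)$ is diagonal whenever $M^\top M$ is. Conjugating by an arbitrary diagonalizer of $M^\top M$ then yields the general claim.

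For the second assertion, by the eigenvector result I may assume $M^\top M$ is diagonal with the zero eigenvalue in position $(p,p)$, so that the last column of $M$ vanishes. Writing $X = [X_1 \; X_2]$ with $X_2 \in \mathbb{R}^n$ the last column, the block-inverse formula gives
\begin{align*}
[(X^\top X)^{-1}]_{pp} = \bigl( X_2^\top (I_n - P_{X_1}) X_2 \bigr)^{-1},
\end{align*}
where $P_{X_1}$ is the orthogonal projection onto the column span of $X_1$. Since $X_2 \sim \mathrm{N}_n(0, I_n)$ is independent of $X_1$ and $I_n - P_{X_1}$ has rank $n-p+1$ almost surely, conditioning on $X_1$ shows that $X_2^\top (I_n - P_{X_1}) X_2 \sim \chi^2_{n-p+1}$. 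The inverse-chi-square mean formula (using the hypothesis $n-p-1 > 0$) yields $\Phi(M)_{pp} = 1/(n-p-1)$, and substitution into the risk formula gives the eigenvalue $n - (n-p-1) = p+1$.

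The main delicate step is the sign-change argument, where one needs both the substitution identity $\Phi(MS) = S \Phi(M) S$ and the law-equality $\Phi(MS) = \Phi(M)$ to hold simultaneously; these chain together only because $M^\top M$ is assumed diagonal, so $(MS)^\top(MS) = SDS = D$. Everything else amounts to a routine conditional computation, and the argument avoids any need for the explicit Hillier formula for $\mathrm{E}[(X^\top X)^{-1}]$, using it only implicitly through orthogonal invariance.
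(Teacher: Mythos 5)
Your proof is correct, and it takes a genuinely different route from the paper. The paper obtains both claims by citing Hillier and Kan on the inverse noncentral Wishart: their Theorem~1 gives directly that ${\rm E}_M[(X^\top X)^{-1}] = U^\top \Psi U$ shares the eigenbasis of $M^\top M = U^\top \Lambda U$, and their Corollary~3 gives $\psi_j = (n-p-1)^{-1}$ when the $j$-th noncentrality eigenvalue is zero; substitution into the risk identity \eqref{EM_risk} then yields $n-(n-p-1)=p+1$. You instead prove the structural fact from scratch: the equivariance $\Phi(MV)=V^\top\Phi(M)V$ for $V\in O(p)$ together with the fact that the law of $X^\top X$ depends on $M$ only through $M^\top M$, applied with sign matrices $S$ when $M^\top M$ is diagonal, forces $\Phi(M)=S\Phi(M)S$ for all $2^p$ sign choices and hence kills the off-diagonal entries; and for the zero-eigenvalue case you compute $[(X^\top X)^{-1}]_{pp}$ via the Schur complement $\bigl(X_2^\top(I_n-P_{X_1})X_2\bigr)^{-1}$, which conditionally on $X_1$ is an inverse $\chi^2_{n-p+1}$ with mean $1/(n-p-1)$ under the standing assumption $n-p-1>0$. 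All steps check out (the full column rank of $X_1$ holds almost surely, and the case of several zero eigenvalues is handled by permutation). What your argument buys is self-containedness and elementarity --- no appeal to hypergeometric expressions is needed, since the proposition only requires the common eigenbasis and the value at a null direction; what the paper's citation buys is more information, namely explicit (if unwieldy) formulas for every eigenvalue $\psi_j(M)$ of ${\rm E}_M[(X^\top X)^{-1}]$, not just those corresponding to zero singular values of $M$, together with a shorter write-up.
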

\begin{proof}
	The matrix $X^{\top} X$ follows the noncentral Wishart distribution $W_p (n, I_p, M^{\top}M)$ \cite{Gupta}.
	Let $M^{\top} M = U^{\top} \Lambda U$ be a spectral decomposition of $M^{\top} M$, where $U \in O(p)$ and $\Lambda={\rm diag}(\sigma_1(M)^2,\dots,\sigma_p(M)^2)$.
	Then, from Theorem 1 of \cite{Hillier}, ${\rm E}_M [(X^{\top} X)^{-1}] = U^{\top} \Psi U$,where $\Psi={\rm diag} (\psi_1(M),\dots,\psi_p(M))$ and each $\psi_j(M)$ is given by an infinite sum involving matrix-variate hypergeometric functions.
	Therefore, the matrix quadratic risk \eqref{EM_risk} of $\hat{M}_{{\rm EM}}$ is given by
	\begin{align}
		{\rm E}_M [(\hat{M}_{{\rm EM}} - M)^{\top} (\hat{M}_{{\rm EM}} - M)] = U^{\top} (n I_p - (n-p-1)^2 \Psi) U, \label{EM_risk2}
	\end{align}
	which has the same eigenvectors with $M^{\top} M= U^{\top} \Lambda U$.
	If the $j$-th eigenvalue of $M^{\top} M$ is zero, then $\psi_j(M) = (n-p-1)^{-1}$ from Corollary 3 of \cite{Hillier}, which means that the corresponding eigenvalue of \eqref{EM_risk2} is $n-(n-p-1)=p+1$.
\end{proof}

Therefore, the Efron--Morris estimator works well when $M$ is close to low-rank.
Specifically, if $M^{\top} M$ has a zero eigenvalue, then the ratio of the corresponding eigenvalues of the matrix quadratic risks of the Efron--Morris estimator and the maximum likelihood estimator is $(p+1)/n$, which goes to zero as $n$ increases.
Thus, the advantage of the Efron--Morris estimator over the maximum likelihood estimator for low-rank matrices is pronounced when the number of row is much larger than the number of columns.

\subsection{Oracle inequality}
Now, we derive an oracle inequality for the Efron--Morris estimator, which will be used to show the adaptive minimaxity of the blockwise Efron--Morris estimator.

The matrix quadratic risk of the linear estimator $\hat{M}_C = XC$ is
\begin{align*}
	{\rm E}_M [(\hat{M}_C - M)^{\top} (\hat{M}_C - M)] = C^{\top} (M^{\top} M + n I_p) C - 2 M^{\top} M C + M^{\top} M.
\end{align*}
For a fixed $M$, it is uniquely minimized by $C_*=C_*(M)=(M^{\top} M + n I_p)^{-1} M^{\top} M$:
\begin{align*}
	{\rm E}_M [(\hat{M}_{C_*} - M)^{\top} (\hat{M}_{C_*} - M)] &= n I_p - n^2 (M^{\top} M + n I_p)^{-1} \\
	&\preceq {\rm E}_M [(\hat{M}_C - M)^{\top} (\hat{M}_C - M)]
\end{align*}
for every $C$.
Thus, $\hat{M}_{C_*(M)}$ can be viewed as the linear oracle estimator.
The Efron--Morris estimator $\hat{M}_{{\rm EM}}$ attains almost the same matrix quadratic risk with this oracle as follows.

\begin{theorem}\label{th_oracle}
	The matrix quadratic risk of the Efron--Morris estimator $\hat{M}_{{\rm EM}}$ satisfies
	\begin{align}
		{\rm E}_M [(\hat{M}_{{\rm EM}} - M)^{\top} (\hat{M}_{{\rm EM}} - M)] \preceq {\rm E}_M [(\hat{M}_{C_*} - M)^{\top} (\hat{M}_{C_*} - M)] + 2 (p+1) I_p \label{em_oracle}
	\end{align}
	for every $M$, where $C_*=C_*(M)=(M^{\top} M + n I_p)^{-1} M^{\top} M$.
\end{theorem}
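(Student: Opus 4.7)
The plan is to use the explicit form of the Efron--Morris risk from \eqref{EM_risk} and the explicit form of the oracle linear risk derived above, reducing the oracle inequality \eqref{em_oracle} to a comparison between $E_M[(X^\top X)^{-1}]$ and $(M^\top M+nI_p)^{-1}$.

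First I would subtract the two explicit risk formulas. Using \eqref{EM_risk} on the left and the oracle formula $E_M[(\hat M_{C_*}-M)^\top(\hat M_{C_*}-M)]=nI_p-n^2(M^\top M+nI_p)^{-1}$ on the right, the inequality \eqref{em_oracle} is equivalent to
\begin{align*}
n^2(M^\top M+nI_p)^{-1} - (n-p-1)^2\,E_M[(X^\top X)^{-1}] \preceq 2(p+1)I_p.
\end{align*}

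Next comes the key step: since $X^\top X$ is positive definite (almost surely, as $n-p-1>0$) and the map $A\mapsto A^{-1}$ is operator convex on the cone of positive definite matrices, the matrix Jensen inequality gives
\begin{align*}
E_M[(X^\top X)^{-1}] \succeq \bigl(E_M[X^\top X]\bigr)^{-1} = (M^\top M+nI_p)^{-1},
\end{align*}
since $X^\top X\sim W_p(n,I_p,M^\top M)$ has mean $M^\top M+nI_p$. Substituting this lower bound on the left-hand side yields
\begin{align*}
n^2(M^\top M+nI_p)^{-1} - (n-p-1)^2\,E_M[(X^\top X)^{-1}] \preceq \bigl(n^2-(n-p-1)^2\bigr)(M^\top M+nI_p)^{-1}.
\end{align*}

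To finish, I would use $n^2-(n-p-1)^2=(p+1)(2n-p-1)$ together with the crude bound $(M^\top M+nI_p)^{-1}\preceq n^{-1}I_p$, which holds because $M^\top M\succeq 0$. This gives the right-hand side $\preceq (p+1)(2n-p-1)n^{-1}I_p\preceq 2(p+1)I_p$, completing the proof.

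The only non-routine step is invoking operator convexity of $A\mapsto A^{-1}$ to compare $E_M[(X^\top X)^{-1}]$ with the inverse of the Wishart mean; everything else is algebra using previously established formulas. I expect this to be the main conceptual obstacle, since a naive scalar-Jensen argument on each entry would not yield a Löwner-order bound.
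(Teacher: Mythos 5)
Your proposal is correct and follows essentially the same route as the paper's proof: both apply operator convexity of the matrix inverse (Jensen) to bound ${\rm E}_M[(X^{\top}X)^{-1}]$ below by $(M^{\top}M+nI_p)^{-1}$, and then combine $n^2-(n-p-1)^2\le 2n(p+1)$ with $(M^{\top}M+nI_p)^{-1}\preceq n^{-1}I_p$ to absorb the difference into $2(p+1)I_p$. No substantive differences.
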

\begin{proof}
	From \eqref{EM_risk},
	\begin{align*}
		{\rm E}_M [(\hat{M}_{{\rm EM}} - M)^{\top} (\hat{M}_{{\rm EM}} - M)] &= n I_p - (n-p-1)^2 {\rm E}_M [(X^{\top} X)^{-1}] \\
		& \preceq n I_p - (n-p-1)^2 {\rm E}_M [X^{\top}X]^{-1} \\
		& = n I_p - (n-p-1)^2 (M^{\top} M + n I_p)^{-1},
	\end{align*}
	where we used the operator convexity of the inverse function \citep{Groves,Hiai}.
	Then, from $(n-p-1)^2 \geq n^2 -2n(p+1)$ and $(M^{\top} M + n I_p)^{-1} \preceq n^{-1} I_p$, we obtain \eqref{em_oracle}. 
\end{proof}

Consider the quadratic loss specified by a $p \times p$ positive definite matrix $Q$: $l_Q(M,\hat{M})={\rm tr} ((\hat{M} - M) Q (\hat{M} - M)^{\top})$. 
Since this quadratic loss is related to the matrix quadratic loss by $l_Q(M,\hat{M})={\rm tr} (L(M,\hat{M}) Q)$, Theorem~\ref{th_oracle} is translated to an oracle inequality under this loss as follows.

\begin{corollary}\label{cor_oracle}
	The $Q$-quadratic risk of the Efron--Morris estimator $\hat{M}_{{\rm EM}}$ satisfies
	\begin{align}
		& {\rm E}_M [{\rm tr} ((\hat{M}_{{\rm EM}} - M) Q (\hat{M}_{{\rm EM}} - M)^{\top})] \nonumber \\
		\leq& {\rm E}_M [{\rm tr} ((\hat{M}_{C_*} - M) Q (\hat{M}_{C_*} - M)^{\top})] + 2 (p+1) {\rm tr} (Q). \label{em_oracle2}
	\end{align}
\end{corollary}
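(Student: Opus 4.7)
The plan is to deduce the corollary mechanically from Theorem~\ref{th_oracle} by passing from the matrix Loewner-order oracle inequality to a scalar one via trace against $Q$. The statement is to be read as a scalar inequality obtained by taking traces of the $n \times n$ risk matrices on both sides (matching the definition $l_Q(M,\hat{M}) = \operatorname{tr}((\hat{M}-M)Q(\hat{M}-M)^{\top})$).

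First, I would record the elementary fact that, for symmetric matrices $A \preceq B$ of the same size and any positive semidefinite $Q$ of compatible size, the scalar inequality $\operatorname{tr}(AQ) \le \operatorname{tr}(BQ)$ holds. This is immediate from
\[
\operatorname{tr}((B-A)Q) \;=\; \operatorname{tr}\!\left(Q^{1/2}(B-A)Q^{1/2}\right) \;\ge\; 0,
\]
the last matrix being a product of two positive semidefinite factors (so PSD, hence of nonnegative trace).

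Next, I would apply this observation to the conclusion of Theorem~\ref{th_oracle}:
\[
\mathrm{E}_M\!\left[(\hat{M}_{\mathrm{EM}} - M)^{\top}(\hat{M}_{\mathrm{EM}} - M)\right] \;\preceq\; \mathrm{E}_M\!\left[(\hat{M}_{C_*} - M)^{\top}(\hat{M}_{C_*} - M)\right] + 2(p+1)\,I_p,
\]
multiplying by $Q$ in the trace sense and using $\operatorname{tr}(I_p\,Q) = \operatorname{tr}(Q)$ to handle the additive term.

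Finally, I would use the cyclic property of the trace, $\operatorname{tr}\!\left((\hat{M}-M)^{\top}(\hat{M}-M)\,Q\right) = \operatorname{tr}\!\left((\hat{M}-M)\,Q\,(\hat{M}-M)^{\top}\right)$, and interchange trace with expectation by linearity, which yields the stated bound. There is no real obstacle: the corollary is a routine matrix-trace consequence of Theorem~\ref{th_oracle}, and the positive definiteness of $Q$ is used only to invoke the PSD square root $Q^{1/2}$ in the preliminary trace-monotonicity fact.
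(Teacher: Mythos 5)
Your proposal is correct and coincides with the paper's (implicit) argument: the corollary is obtained from Theorem~\ref{th_oracle} precisely by tracing the L\"owner inequality against $Q$, using $A \preceq B \Rightarrow \operatorname{tr}(AQ) \le \operatorname{tr}(BQ)$, the cyclic property of the trace, and $\operatorname{tr}(I_p Q) = \operatorname{tr}(Q)$. Your remark that the displayed inequality is to be read as a scalar statement about the $Q$-quadratic risk is also the intended reading.
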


We conjecture that the constant term $2(p+1)I_p$ in \eqref{em_oracle} can be improved to $(p+1)I_p$, although the current form is enough for the present purpose.
When $p=1$, this conjecture is true from \eqref{js_oracle}, which is proved by using the Poisson mixture representation of the noncentral chi-square distribution \cite[Proposition~2.8]{Johnstone}.
The latter half of Proposition~\ref{prop_em} also supports this conjecture when $M$ has low rank.
Figure~\ref{EMrisk2} compares the Frobenius risk of the Efron--Morris estimator with the trace of the right hand side of \eqref{em_oracle} and its conjectured modification.
It implies that the improved oracle inequality holds tightly when $n$ is large, which is consistent with the finding in \cite[Figure~2.6]{Johnstone} for $p=1$.
It is an interesting future work to prove this conjecture.

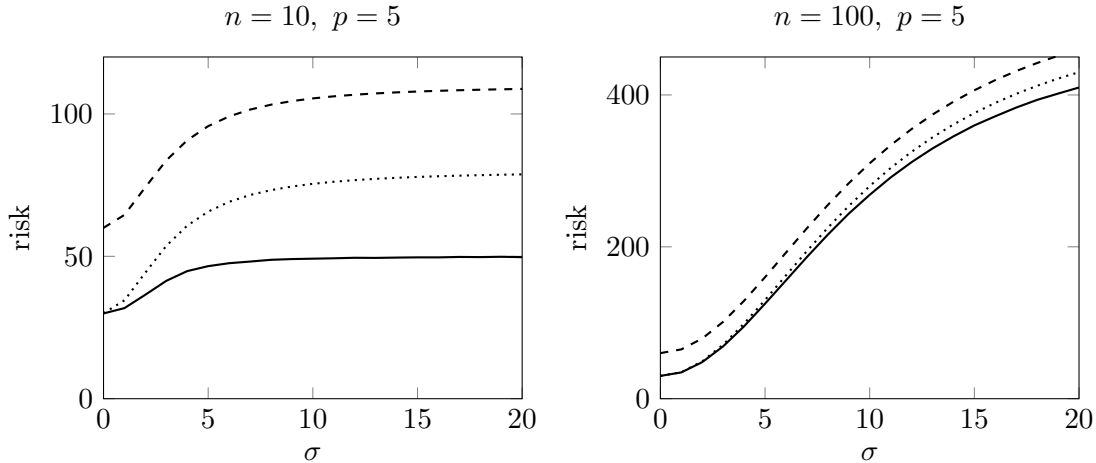
\begin{figure}[t]
	\centering
	\begin{tikzpicture}
		\begin{axis}[
			title={$n=10, \ p=5$},
			xlabel={$\sigma$}, xmin=0, xmax=20,
			ylabel={risk}, ymin=0, ymax=120,
			width=0.45\linewidth
			]
			\addplot[thick, color=black,
			filter discard warning=false, unbounded coords=discard
			] table {
				0   29.9426
				1.0000   31.8117
				2.0000   36.5250
				3.0000   41.3936
				4.0000   44.8105
				5.0000   46.5483
				6.0000   47.5839
				7.0000   48.1588
				8.0000   48.7757
				9.0000   49.0366
				10.0000   49.1683
				11.0000   49.3107
				12.0000   49.4866
				13.0000   49.4419
				14.0000   49.5548
				15.0000   49.6319
				16.0000   49.6290
				17.0000   49.7821
				18.0000   49.7258
				19.0000   49.8418
				20.0000   49.7081
				
			};
			\addplot[dashed, thick, color=black,
			filter discard warning=false, unbounded coords=discard
			] table {
				0   60.0000
				1.0000   64.5455
				2.0000   74.2857
				3.0000   83.6842
				4.0000   90.7692
				5.0000   95.7143
				6.0000   99.1304
				7.0000  101.5254
				8.0000  103.2432
				9.0000  104.5055
				10.0000  105.4545
				11.0000  106.1832
				12.0000  106.7532
				13.0000  107.2067
				14.0000  107.5728
				15.0000  107.8723
				16.0000  108.1203
				17.0000  108.3278
				18.0000  108.5030
				19.0000  108.6523
				20.0000  108.7805
			};
			\addplot[dotted, thick, color=black,
			filter discard warning=false, unbounded coords=discard
			] table {
				
				0   30.0000
				1.0000   34.5455
				2.0000   44.2857
				3.0000   53.6842
				4.0000   60.7692
				5.0000   65.7143
				6.0000   69.1304
				7.0000   71.5254
				8.0000   73.2432
				9.0000   74.5055
				10.0000   75.4545
				11.0000   76.1832
				12.0000   76.7532
				13.0000   77.2067
				14.0000   77.5728
				15.0000   77.8723
				16.0000   78.1203
				17.0000   78.3278
				18.0000   78.5030
				19.0000   78.6523
				20.0000   78.7805
			};
		\end{axis}
	\end{tikzpicture} 
	\begin{tikzpicture}
		\begin{axis}[
			title={$n=100, \ p=5$},
			xlabel={$\sigma$}, xmin=0, xmax=20,
			ylabel={risk}, ymin=0, ymax=450,
			width=0.45\linewidth
			]
			\addplot[thick, color=black,
			filter discard warning=false, unbounded coords=discard
			] table {
				0   30.0030
				1.0000   34.6215
				2.0000   48.1779
				3.0000   68.9516
				4.0000   95.3752
				5.0000  125.1165
				6.0000  155.7865
				7.0000  186.5346
				8.0000  216.0449
				9.0000  243.6997
				10.0000  268.4872
				11.0000  291.2999
				12.0000  311.4624
				13.0000  329.5191
				14.0000  345.4037
				15.0000  359.8790
				16.0000  371.9150
				17.0000  383.3799
				18.0000  393.5132
				19.0000  401.7922
				20.0000  409.7988
			};
			\addplot[dashed, thick, color=black,
			filter discard warning=false, unbounded coords=discard
			] table {
				0   60.0000
				1.0000   64.9505
				2.0000   79.2308
				3.0000  101.2844
				4.0000  128.9655
				5.0000  160.0000
				6.0000  192.3529
				7.0000  224.4295
				8.0000  255.1220
				9.0000  283.7569
				10.0000  310.0000
				11.0000  333.7557
				12.0000  355.0820
				13.0000  374.1264
				14.0000  391.0811
				15.0000  406.1538
				16.0000  419.5506
				17.0000  431.4653
				18.0000  442.0755
				19.0000  451.5401
				20.0000  460.0000
			};
			\addplot[dotted, thick, color=black,
			filter discard warning=false, unbounded coords=discard
			] table {
				0   30.0000
				1.0000   34.9505
				2.0000   49.2308
				3.0000   71.2844
				4.0000   98.9655
				5.0000  130.0000
				6.0000  162.3529
				7.0000  194.4295
				8.0000  225.1220
				9.0000  253.7569
				10.0000  280.0000
				11.0000  303.7557
				12.0000  325.0820
				13.0000  344.1264
				14.0000  361.0811
				15.0000  376.1538
				16.0000  389.5506
				17.0000  401.4653
				18.0000  412.0755
				19.0000  421.5401
				20.0000  430.0000
			};
		\end{axis}
	\end{tikzpicture} 
	\caption{Frobenius risk of the Efron--Morris estimator (solid), trace of the right hand side of \eqref{em_oracle} (dashed) and its conjectured modification (dotted) when $\sigma_1(M)=\dots=\sigma_p(M)=\sigma$. Left: $n=10$, $p=5$. Right: $n=100$, $p=5$}
	\label{EMrisk2}
\end{figure}

\section{Multivariate Gaussian sequence model and Sobolev ellipsoid}\label{sec_seq}
\subsection{Multivariate Gaussian sequence model}
For $p>1$, we consider the multivariate version of the Gaussian sequence model defined by
\begin{align}
	y_i = \theta_i + \varepsilon \xi_i, \quad i=1,2,\cdots, \label{mGS}
\end{align}
where $\varepsilon>0$ and $\xi_{i} \sim {\rm N}_p (0, I_p)$ are independent Gaussian random vectors.
This model is equivalent to the multivariate version of the Gaussian white noise model
\begin{align*}
	{\rm d} Y_j (t) = f_j (t) {\rm d} t + \varepsilon {\rm d} W_j (t),\quad j=1,\cdots,p,
\end{align*}
where $t \in [0,1]$, $f_1(t),\cdots,f_p(t)$ are drift functions to be estimated and $W_1(t),\cdots,W_p(t)$ are independent standard Wiener processes.
Specifically, for an orthonormal basis $\phi=(\phi_i)$ of $L_2 [0,1]$, let
\begin{align*}
	y_{ij} = \int \phi_i(t) {\rm d} Y_j(t), \quad \theta_{ij} = \int \phi_i(t) f_j (t) {\rm d} t, \quad \xi_{ij} = \int \phi_i(t) {\rm d} W_j(t).
\end{align*}
Then, by putting $y_i=(y_{i1},\dots,y_{ip})$, $\theta_i=(\theta_{i1},\dots,\theta_{ip})$ and $\xi_i=(\xi_{i1},\dots,\xi_{ip})$, the multivariate Gaussian sequence model \eqref{mGS} is obtained.
Also, from a similar argument to \cite{Brown}, nonparametric regression with vector response
\begin{align*}
	y_i = f(t_i) + \xi_i, \quad i=1,\dots,n,
\end{align*}
where $f:[0,1] \to \mathbb{R}^p$, $t_i=i/n$ and $\xi_i \sim {\rm N}_p (0,I_p)$ are independent, is asymptotically equivalent to the multivariate Gaussian sequence model \eqref{mGS} with $\varepsilon = n^{-1/2}$ as $n \to \infty$.

In the following, we consider estimation of $\theta=(\theta_{i})$ from $y=(y_{i})$ under the quadratic loss specified by a $p \times p$ positive definite matrix $Q$:
\begin{align}
	L_Q(\theta,\hat{\theta}) = \sum_{i=1}^{\infty} (\hat{\theta}_i-\theta_i)^{\top} Q (\hat{\theta}_i - \theta_i). \label{lossQ}
\end{align}
In the multivariate Gaussian white noise model setting, this quadratic loss corresponds to the weighted $L^2$ loss given by
\begin{align*}
	L_Q(\theta,\hat{\theta}) = \int_0^1 (\hat{Y}(t)-Y(t))^{\top} Q (\hat{Y}(t)-Y(t)) {\rm d} t.
\end{align*}

The current setting is motivated from practical applications of nonparameteric estimation.
For example, consider brain time series data such as EEG, which is a multichannel recording of brain activity.
Due to the multidimensionality, denoising of such data corresponds to nonparametric regression with vector response.
In brain-machine interface, some spatial filters are commonly applied to EEG in order to obtain relevant features for decoding brain activity \cite{Blankertz}.
Namely, a linear transformation $g(t) = A f(t)$ of EEG $f(t)$ with some matrix $A$ is considered for later use.
Since $\| \hat{g}(t)-g(t) \|^2=(\hat{f}(t)-f(t))^{\top} Q (\hat{f}(t)-f(t))$ with $Q = A^{\top}A$, mean squared error of an estimate of $g(t)$ corresponds to the quadratic loss with $Q$ of an estimate of $f(t)$.

The multivariate Gaussian sequence model is closely related to the matrix denoising model, in which a low-rank matrix is observed with noise.
For the matrix denoising model, several methods for estimating the signal matrix have been developed and their theoretical properties have been investigated.
Among them, Gavish and Donoho \cite{Gavish14} and Donoho and Gavish \cite{Donoho14} studied singular value thresholding estimators in a high-dimensional asymptotic framework.
They derived the optimal thresholds in closed form, by which the estimator can adapt to unknown rank and unknown noise level optimally in terms of mean squared error.
Later, Gavish and Donoho \cite{Gavish17} also derived the optimal singular value shrinkage rules (nonlinearities) for various loss functions.
Recently, these results have been extended to correlated noise \cite{Donoho23,Gavish23}.
Other studies on the matrix denoising model include data-driven rank selection in the context of PCA \cite{Choi} and  nuclear norm penalization \cite{Chen,Koltchinskii}.
The multivariate Gaussian sequence model can be viewed as an infinite-dimensional version of the matrix denosing model.
Thus, some generalization of the above results for the matrix denoising model may be possible, which is an interesting future problem.

\subsection{Multivariate Sobolev ellipsoid}
For $\beta>0$ and a $p \times p$ positive definite matrix $R$, we define the multivariate Sobolev ellipsoid as
\begin{align}
	\Theta (\beta,R) := \left\{ \theta = (\theta_1,\theta_2,\cdots) \relmiddle| \sum_{i=1}^{\infty} a_{\beta,i}^2 \theta_i^{\top} R^{-1} \theta_i \leq 1 \right\}, \label{ellipsoid2}
\end{align}
where $a_{\beta,i}$ is given by \eqref{sobolev_coeff}.
When $p=1$, it reduces to the original Sobolev ellipsoid \eqref{ellipsoid}.

The multivariate Sobolev ellipsoid is related to a multivariate version of the periodic Sobolev class.
For a vector-valued function $f: [0,1] \to \mathbb{R}^p$, we write each component of $f$ as $f_j: [0,1] \to \mathbb{R}$ for $j=1,\dots,p$.
For $\beta \in \{ 1,2,\cdots \}$, we define $f^{(\beta)}: [0,1] \to \mathbb{R}^p$ by $f^{(\beta)} (x)=(f_1^{(\beta)} (x),\dots,f_p^{(\beta)} (x))^{\top}$ where $f_j^{(\beta)}$ is the $\beta$-th derivative of $f_j$.
For $\beta \in \{ 1,2,\cdots \}$ and a $p \times p$ positive semidefinite matrix $L$, we define the multivariate periodic Sobolev class as
\begin{align*}
	W (\beta,L) &:=  \left\{ f: [0,1] \to \mathbb{R}^p \mid f_j \in L_2[0,1], \ f_j^{(l)} (0) = f_j^{(l)} (1), \ j=1,\dots,p, \ l=0,1,\cdots,\beta-1, \right. \\
	& \left. f_1^{(\beta - 1)}, \cdots, f_p^{(\beta - 1)} : {\rm absolutely\ continuous}, \ \int_0^1 f^{(\beta)} (x)^{\top} L^{-2} f^{(\beta)} (x) {\rm d} x \leq 1  \right\}.
\end{align*}
Then, Lemma \ref{lem_sobolev} is extended as follows.
Its proof is deferred to the Appendix.	

\begin{proposition}\label{prop_sobolev}
	For $\theta=(\theta_{ij})$, let 
	\begin{align}\label{fourier_expansion}
		f_j (x) = \sum_{i=1}^{\infty} \theta_{ij} \phi_i(x), \quad j=1,\dots,p,
\end{align}
	where $\phi = (\phi_i)$ is the Fourier basis of $L_2 [0,1]$ in \eqref{fourier}.
	Then, $f \in W (\beta,L)$ if and only if $\theta \in \Theta \left( \beta, R \right)$ in \eqref{ellipsoid2} where $R=L^2/\pi^{2 \beta}$.
\end{proposition}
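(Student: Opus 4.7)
The plan is to reduce to the univariate Sobolev characterization in Lemma~\ref{lem_sobolev} via an orthogonal change of basis in $\mathbb{R}^p$. Since $L^{-2}$ must make sense, $L$ is positive definite, and we may write the spectral decomposition $L^2 = U D U^{\top}$ with $U \in O(p)$ and $D = \operatorname{diag}(d_1,\dots,d_p)$, $d_j>0$. Set $\tilde f(x) = U^{\top} f(x)$; its Fourier coefficients are $\tilde\theta_i = U^{\top}\theta_i$. Because $U$ is a constant invertible matrix, periodicity of the derivatives up to order $\beta-1$ and absolute continuity of the $(\beta-1)$-th derivative hold for each $f_j$ if and only if they hold for each $\tilde f_j$, so the auxiliary smoothness conditions in the definition of $W(\beta,L)$ are invariant under this change of basis.

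Next I would compute the quadratic form defining $W(\beta,L)$ in the new basis, which decouples into the scalar components:
\begin{align*}
\int_0^1 f^{(\beta)}(x)^{\top} L^{-2} f^{(\beta)}(x) \, dx = \sum_{j=1}^p d_j^{-1} \int_0^1 \bigl(\tilde f_j^{(\beta)}(x)\bigr)^2 \, dx.
\end{align*}
The computation underlying Lemma~\ref{lem_sobolev}, namely Parseval's identity applied to the termwise differentiated Fourier series of $\tilde f_j$, yields the sharper identity
\begin{align*}
\int_0^1 \bigl(\tilde f_j^{(\beta)}(x)\bigr)^2 \, dx = \pi^{2\beta} \sum_{i=1}^{\infty} a_{\beta,i}^2 \tilde\theta_{ij}^2
\end{align*}
whenever the auxiliary conditions hold for $\tilde f_j$. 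Substituting this and recombining,
\begin{align*}
\sum_{j=1}^p d_j^{-1} \pi^{2\beta} \sum_{i=1}^\infty a_{\beta,i}^2 \tilde\theta_{ij}^2 = \pi^{2\beta} \sum_{i=1}^\infty a_{\beta,i}^2 \tilde\theta_i^{\top} D^{-1} \tilde\theta_i = \pi^{2\beta} \sum_{i=1}^\infty a_{\beta,i}^2 \theta_i^{\top} L^{-2} \theta_i = \sum_{i=1}^\infty a_{\beta,i}^2 \theta_i^{\top} Q^{-1} \theta_i,
\end{align*}
using $U D^{-1} U^{\top} = L^{-2}$ and $Q^{-1} = \pi^{2\beta} L^{-2}$. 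The constraint $\leq 1$ in the definition of $W(\beta,L)$ therefore coincides exactly with the ellipsoid inequality defining $\Theta(\beta,Q)$.

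For the "if" direction one also needs to recover the periodicity and absolute continuity conditions from $\theta \in \Theta(\beta,Q)$ alone. This follows from the coordinate bound $\tilde\theta_{ij}^2 \leq d_j\, \tilde\theta_i^{\top} D^{-1} \tilde\theta_i$, which places each scalar sequence $(\tilde\theta_{ij})_{i\geq 1}$ inside a univariate Sobolev ellipsoid of finite radius; Lemma~\ref{lem_sobolev} then supplies the required regularity for each $\tilde f_j$, and hence, via $f = U\tilde f$, for each $f_j$. The main obstacle, if any, is bookkeeping the change of basis and transferring the auxiliary smoothness conditions across the linear map $U^{\top}$; no new analytic ingredient beyond the univariate Parseval identity from Lemma~\ref{lem_sobolev} is required.
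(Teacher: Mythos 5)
Your proof is correct, but it takes a different route from the paper. The paper works directly in the multivariate setting: it differentiates the Fourier series termwise, uses orthonormality to get the cross-product identity $\int_0^1 f_j^{(\beta)}f_k^{(\beta)}\,{\rm d}x=\pi^{2\beta}\sum_i a_{\beta,i}^2\theta_{ij}\theta_{ik}$, and contracts with $L^{-2}$; for the converse it redoes the regularity work by hand (Cauchy--Schwarz to get uniform convergence of the lower-order differentiated series, construction of a candidate derivative $g_j$, termwise integration to establish absolute continuity and $g_j=f_j^{(\beta)}$ a.e.). You instead diagonalize $L^2=UDU^{\top}$, pass to $\tilde f=U^{\top}f$ so that the quadratic form decouples, and reduce everything to the scalar case: the regularity transfer under the constant orthogonal map is immediate, and in the ``if'' direction you outsource the absolute-continuity/periodicity argument to Lemma~\ref{lem_sobolev} applied to each $(\tilde\theta_{ij})_i$, which you correctly place in a univariate ellipsoid via $\tilde\theta_{ij}^2\le d_j\,\tilde\theta_i^{\top}D^{-1}\tilde\theta_i$. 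The one point worth flagging is that your argument needs the exact Parseval identity $\int_0^1(\tilde f_j^{(\beta)})^2\,{\rm d}x=\pi^{2\beta}\sum_i a_{\beta,i}^2\tilde\theta_{ij}^2$, which is the computation inside the proof of Lemma~\ref{lem_sobolev} rather than its stated iff (using only the statement would lose a factor of $p$ in both directions); since that identity is standard and is exactly what the paper reproves in multivariate form, invoking it is legitimate, and your reduction buys a shorter argument that avoids cross-term bookkeeping and does not repeat the absolute-continuity construction, at the cost of relying on the strengthened form of the univariate result.
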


Figure~\ref{sobolev_sample} plots a function in $W(\beta,L)$ with $p=2$ for several settings of $\beta$ and $L$.
Like the usual Sobolev space, the parameter $\beta$ represents the smoothness of $f$. 
On the other hand, the matrix parameter $L$ specifies not only the scale but also the correlations between $f_1,\dots,f_p$.
The functions in Figure~\ref{sobolev_sample} were obtained by using a similar method to Figure~6.3 of \cite{Johnstone}.
Specifically, we sampled $\theta_i \sim {\rm N}_p(0, 12 \pi^{-2} p^{-1} b_{\beta,i}^2 R)$ independently for $i=1,\dots,2N+1$ and substituted them into \eqref{fourier_expansion}, where $N=10^4$, $R=L^2/\pi^{2 \beta}$, $b_{\beta,1}=0$ and
\begin{align*}
	b_{\beta,i} = \begin{cases} i^{-\beta-1}, & i : {\rm even}, \\ (i-1)^{-\beta-1}, & i : {\rm odd}, \end{cases}
\end{align*}
for $i \geq 2$.
From Lemma~\ref{lem_concentration},
\begin{align*}
	\sum_{i=1}^{2N+1} a_{\beta,i}^2 \theta_i^{\top} R^{-1} \theta_i \approx \sum_{i=1}^{2N+1} a_{\beta,i}^2 {\rm tr} (R^{-1} \cdot 12 \pi^{-2} p^{-1} b_{\beta,i}^2 R) = \frac{12}{\pi^2} \sum_{m=1}^{N} \frac{2}{(2m)^2} \approx 1
\end{align*}
as $N \to \infty$.
Thus, the sampled $\theta$ is considered to lie around the boundary of $\Theta(\beta,R)$.

\begin{figure}[h]
	\centering
	\begin{tikzpicture}
		\begin{axis}[
			title={$\beta=0.5, \ L = \begin{pmatrix} 1 & 0 \\ 0 & 1 \end{pmatrix}$},
			xlabel={$x$}, xmin=0, xmax=1, ylabel={$f$},
			width=0.45\linewidth
			]
			\addplot[ thick, color=black,
			filter discard warning=false, unbounded coords=discard
			] table [x=x, y=f1] {sobolev_sample1.dat};
			\addplot[dashed,  thick, color=black,
			filter discard warning=false, unbounded coords=discard
			] table [x=x, y=f2] {sobolev_sample1.dat};
		\end{axis}
	\end{tikzpicture} 
	\begin{tikzpicture}
		\begin{axis}[
			title={$\beta=1, \ L = \begin{pmatrix} 5 & 0 \\ 0 & 1 \end{pmatrix}$},
			xlabel={$x$}, xmin=0, xmax=1, ylabel={$f$},
			width=0.45\linewidth
			]
			\addplot[ thick, color=black,
			filter discard warning=false, unbounded coords=discard
			] table [x=x, y=f1] {sobolev_sample2.dat};
			\addplot[dashed,  thick, color=black,
			filter discard warning=false, unbounded coords=discard
			] table [x=x, y=f2] {sobolev_sample2.dat};
		\end{axis}
	\end{tikzpicture} 
	\begin{tikzpicture}
		\begin{axis}[
			title={$\beta=0.5, \ L = \begin{pmatrix} 1 & 0.7 \\ 0.7 & 1 \end{pmatrix}$},
			xlabel={$x$}, xmin=0, xmax=1, ylabel={$f$},
			width=0.45\linewidth
			]
			\addplot[thick, color=black,
			filter discard warning=false, unbounded coords=discard
			] table [x=x, y=f1] {sobolev_sample3.dat};
			\addplot[dashed, thick, color=black,
			filter discard warning=false, unbounded coords=discard
			] table [x=x, y=f2] {sobolev_sample3.dat};
		\end{axis}
	\end{tikzpicture} 
	\begin{tikzpicture}
		\begin{axis}[
			title={$\beta=1, \ L = \begin{pmatrix} 1 & -0.7 \\ -0.7 & 1 \end{pmatrix}$},
			xlabel={$x$}, xmin=0, xmax=1, ylabel={$f$},
			width=0.45\linewidth
			]
			\addplot[thick, color=black,
			filter discard warning=false, unbounded coords=discard
			] table [x=x, y=f1] {sobolev_sample4.dat};
			\addplot[dashed, thick, color=black,
			filter discard warning=false, unbounded coords=discard
			] table [x=x, y=f2] {sobolev_sample4.dat};
		\end{axis}
	\end{tikzpicture} 
	\caption{Sample functions from $W(\beta,L)$. solid: $f_1$, dashed: $f_2$}
	\label{sobolev_sample}
\end{figure}
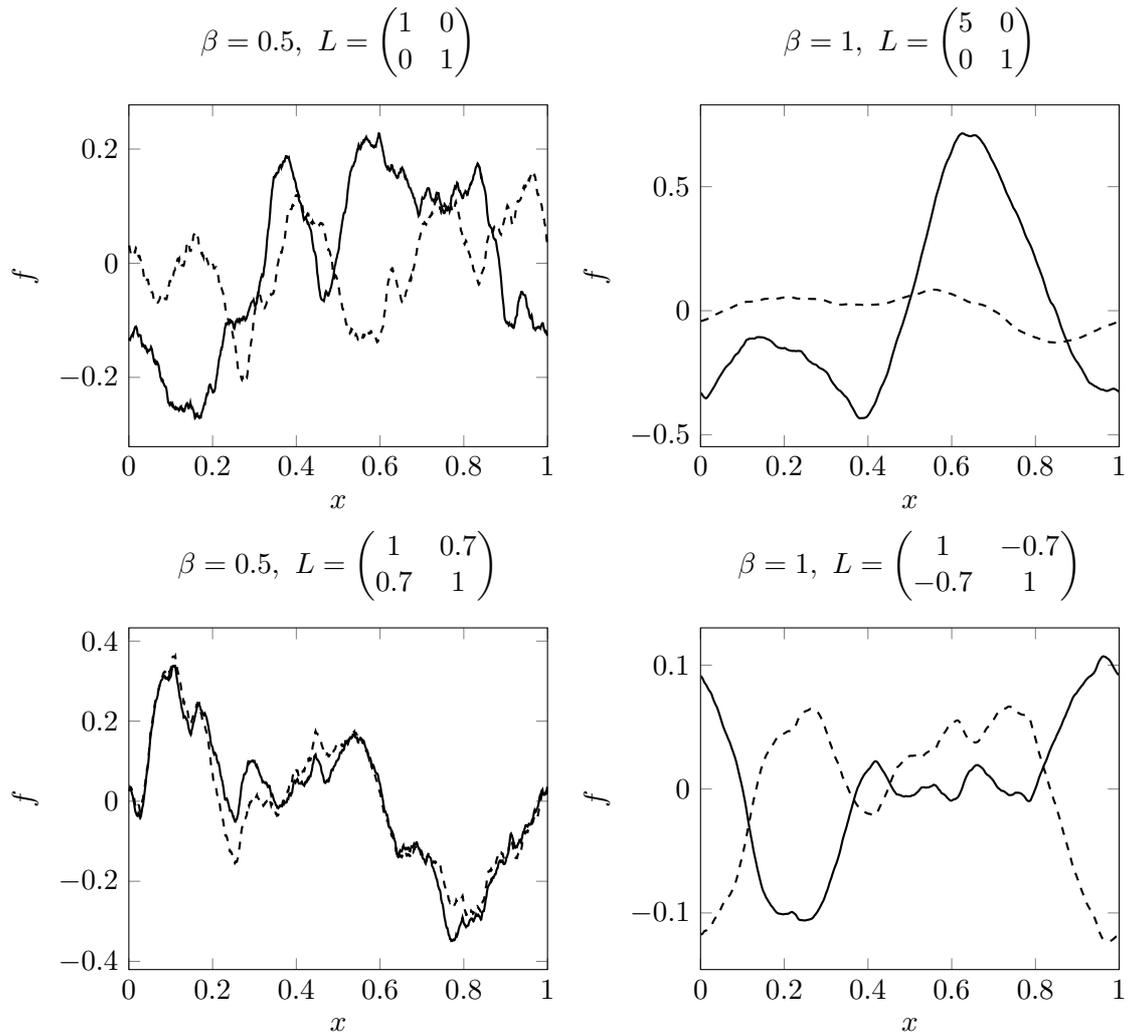

\section{Asymptotically minimax estimator on multivariate Sobolev ellipsoid}\label{sec_pinsker}
Now, we derive an asymptotically minimax estimator on the multivariate Sobolev ellipsoid \eqref{ellipsoid2}, which can be viewed as an extension of Pinsker's theorem (Proposition~\ref{prop_pinsker}). 
Several technical lemmas are given in the Appendix.
For completeness, we also extended the finite-dimensional version of Pinsker's theorem \cite{Nussbaum99,Wasserman} to multivariate setting.
Its detail is given in the Appendix.

For a symmetric matrix $S \in \mathbb{R}^{p \times p}$ that has a spectral decomposition $S=U^{\top} \Lambda U$ with $U \in O(p)$ and $\Lambda={\rm diag} (\lambda_1,\dots,\lambda_p)$, let $S_+$ be its projection onto the positive semidefinite cone defined by $S_+=U^{\top} \Lambda_+ U \succeq O$, where $\Lambda_+={\rm diag} ((\lambda_1)_+,\dots,(\lambda_p)_+)$.

\begin{lemma}\label{lem_kappa}
	Let $a=(a_i)$ be a non-decreasing sequence such that $a_i \geq 0$ and $a_i \to \infty$ as $i \to \infty$, and $Q$ be a $p \times p$ positive definite matrix with eigenvalues $\lambda_1 \geq \dots \geq \lambda_p > 0$.  
	Then, the equation
	\begin{align}
		\varepsilon^2 \kappa^{-1} \sum_{i=1}^{\infty} a_i {\rm tr} (I_p-a_i \kappa Q^{-1})_+ = 1 \label{kappa_eq}
	\end{align}
	has a unique solution of $\kappa > 0$ for every $\varepsilon>0$, which satisfies
	\begin{align}
		\kappa = \varepsilon^2 \left( 1+\varepsilon^2 \sum_{j=1}^p \lambda_j^{-1} \sum_{i=1}^{N_j(\kappa)} a_i^2 \right)^{-1} \sum_{j=1}^p \sum_{i=1}^{N_j(\kappa)} a_i, \label{kappa_eq2}
	\end{align}
	where $N_j(\kappa) = \max \{ i \mid 1- a_i \kappa \lambda_j^{-1} > 0 \}$ for $j=1,\dots,p$.
\end{lemma}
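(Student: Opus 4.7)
The plan is to reduce the matrix equation \eqref{kappa_eq} to a sum of scalar equations by simultaneously diagonalizing $Q$, and then analyze monotonicity to get existence and uniqueness. Write $Q = V^{\top} \Lambda V$ with $V \in O(p)$ and $\Lambda = \mathrm{diag}(\lambda_1, \dots, \lambda_p)$. Since $I_p - a_i \kappa Q^{-1}$ is symmetric with eigenvalues $1 - a_i \kappa \lambda_j^{-1}$ for $j=1,\dots,p$, the definition of $(\cdot)_+$ immediately yields
\begin{align*}
	\mathrm{tr}(I_p - a_i \kappa Q^{-1})_+ = \sum_{j=1}^p (1 - a_i \kappa \lambda_j^{-1})_+.
\end{align*}
Multiplying by $a_i \kappa^{-1}$ and using $\kappa^{-1}(1 - a_i \kappa \lambda_j^{-1})_+ = (\kappa^{-1} - a_i \lambda_j^{-1})_+$ lets me rewrite the left-hand side of \eqref{kappa_eq} as
\begin{align*}
	F(\kappa) := \varepsilon^2 \sum_{j=1}^p \sum_{i=1}^{\infty} a_i (\kappa^{-1} - a_i \lambda_j^{-1})_+.
\end{align*}

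Next I would establish that $F(\kappa) = 1$ has a unique positive solution. For each fixed pair $(i,j)$, the map $\kappa \mapsto (\kappa^{-1} - a_i \lambda_j^{-1})_+$ is continuous, non-negative, non-increasing, and strictly decreasing on $\{\kappa : \kappa < \lambda_j / a_i\}$ (treating $a_i = 0$ as giving a term that vanishes identically). Because $(a_i)$ is non-decreasing with $a_i \to \infty$, for every $\kappa > 0$ only finitely many terms are nonzero, so $F(\kappa) < \infty$. This finite sum of non-increasing functions is continuous and non-increasing, and strictly decreasing on the set $\{F > 0\}$ (any positive contribution strictly decreases when $\kappa$ increases). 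Fixing any $i_0$ with $a_{i_0} > 0$, the single term $p\, a_{i_0} (\kappa^{-1} - a_{i_0} \lambda_p^{-1})_+$ already forces $F(\kappa) \to \infty$ as $\kappa \to 0^+$, while $F(\kappa) = 0$ once $\kappa > \lambda_1 / a_{i_0}$. Continuity and strict monotonicity on the region $F > 0$ then give a unique $\kappa > 0$ solving $F(\kappa) = 1$.

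Finally, to derive \eqref{kappa_eq2} I would restrict the sums to the indices that actually contribute. For the unique $\kappa$ from the preceding step, the term indexed by $(i,j)$ is positive iff $i \leq N_j(\kappa)$ as defined in the statement, so
\begin{align*}
	1 = F(\kappa) = \varepsilon^2 \kappa^{-1} \sum_{j=1}^p \sum_{i=1}^{N_j(\kappa)} a_i - \varepsilon^2 \sum_{j=1}^p \lambda_j^{-1} \sum_{i=1}^{N_j(\kappa)} a_i^2.
\end{align*}
Solving this linear (in $\kappa^{-1}$) identity for $\kappa$ gives exactly \eqref{kappa_eq2}.

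I do not anticipate a serious obstacle here. The only mild care needed is in handling the fact that $N_j(\kappa)$ depends on $j$ (since the eigenvalues $\lambda_j$ differ), so the double sum cannot be swapped with the $N_j$ cutoff the way it could if $Q$ were a scalar multiple of $I_p$; but keeping the outer sum over $j$ and the inner sum over $i$ with its $j$-dependent cutoff resolves this cleanly. The strict monotonicity of $F$ on $\{F>0\}$ is what rules out a continuum of solutions and is the one place where one must argue slightly beyond mere non-increase.
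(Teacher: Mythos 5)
Your proof is correct and takes essentially the same route as the paper's: diagonalize $Q$ so the trace becomes $\sum_j(1-a_i\kappa\lambda_j^{-1})_+$, show the left-hand side of \eqref{kappa_eq} is continuous, strictly decreasing where positive, with limits $\infty$ at $0$ and $0$ at $\infty$, and then rearrange the resulting identity (linear in $\kappa^{-1}$ once the sums are cut at $N_j(\kappa)$) to obtain \eqref{kappa_eq2}. The only nitpick is that your claim ``$F(\kappa)=0$ once $\kappa>\lambda_1/a_{i_0}$'' needs $i_0$ to be the \emph{first} index with $a_{i_0}>0$ (earlier positive entries would be smaller and have larger thresholds), but this choice is available and the argument is unaffected.
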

\begin{proof}	
	Let
	\begin{align*}
		g(\kappa) := \varepsilon^2 \sum_{i=1}^{\infty} a_i {\rm tr} (I_p-a_i \kappa Q^{-1})_+ = \varepsilon^2 \sum_{j=1}^p \sum_{i=1}^{N_j(\kappa)} a_i (1-a_i \kappa \lambda_j^{-1}).
	\end{align*}
	Clearly, $g$ is continuous and piecewise linear.
	For $\kappa_1 < \kappa_2$, we have $1-a_i \kappa_1 \lambda_j^{-1} > 1-a_i \kappa_2 \lambda_j^{-1}$ and $N_j(\kappa_1) \geq N_j(\kappa_2)$ for every $j$, which leads to $g(\kappa_1) > g(\kappa_2)$.
	Thus, $g(\kappa)$ is strictly monotone decreasing.
	Also, $g(\kappa) \to \infty$ as $\kappa \to 0$ and $g(\kappa) \to 0$ as $\kappa \to \infty$.
	Therefore, $\kappa^{-1} g(\kappa)$ is also strictly monotone decreasing, $\kappa^{-1} g(\kappa) \to \infty$ as $\kappa \to 0$ and $\kappa^{-1} g(\kappa) \to 0$ as $\kappa \to \infty$.
	Hence, the equation \eqref{kappa_eq}, which is equivalent to $\kappa^{-1} g(\kappa)=1$, has a unique solution of $\kappa>0$.
	From
	\begin{align*}
		\kappa^{-1} g(\kappa) = \varepsilon^2 \left( \sum_{j=1}^p \sum_{i=1}^{N_j(\kappa)} a_i \right) \kappa^{-1} - \varepsilon^2 \sum_{j=1}^p \lambda_j^{-1} \sum_{i=1}^{N_j(\kappa)} a_i^2, 
	\end{align*}
	the solution of $\kappa^{-1} g(\kappa) = 1$ satisfies \eqref{kappa_eq2}.
\end{proof}

\begin{lemma}\label{lem_kappa2}
	Let $a_i=a_{\beta,i}$ be the coefficients of the multivariate Sobolev ellipsoid \eqref{sobolev_coeff}.
	Then, the equation \eqref{kappa_eq} has a unique solution that satisfies $\kappa = \kappa_{*} (1+o(1))$ as $\varepsilon \to 0$, where
	\begin{align*}
		\kappa_{*} := \left( \frac{\beta {\rm tr} (Q^{(\beta+1)/\beta})}{(\beta+1) (2 \beta+1)} \right)^{{\beta}/{(2 \beta+1)}} \varepsilon^{{2 \beta}/{(2 \beta+1)}}.
	\end{align*}
\end{lemma}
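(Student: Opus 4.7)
The plan is to treat existence and uniqueness as already given by Lemma~\ref{lem_kappa} and concentrate on the asymptotic. I would exploit the explicit fixed-point identity \eqref{kappa_eq2} together with sharp Riemann-sum approximations for the partial sums $\sum_{i=1}^{N_j(\kappa)} a_{\beta,i}^k$ that appear in it.

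First I would verify that $\kappa \to 0$ as $\varepsilon\to 0$: if instead $\kappa$ stayed bounded below by some $c>0$ along a subsequence, then since $a_{\beta,i}\to\infty$ the indices $N_j(\kappa)$ would stay bounded, making the left-hand side of \eqref{kappa_eq} of order $\varepsilon^2/c$, which cannot equal $1$. Hence $N_j(\kappa) \to \infty$ for each $j$.

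Second, I would exploit the pair structure $a_{\beta,2k}=a_{\beta,2k+1}=(2k)^\beta$ and $a_{\beta,1}=0$. With $K_j(\kappa) := \lfloor \tfrac{1}{2}(\lambda_j/\kappa)^{1/\beta}\rfloor$, one has $N_j(\kappa) = 2K_j(\kappa)+1$ for all small enough $\kappa$, and consequently
\begin{align*}
\sum_{i=1}^{N_j(\kappa)} a_{\beta,i} = 2^{\beta+1}\sum_{k=1}^{K_j(\kappa)} k^{\beta}, \qquad
\sum_{i=1}^{N_j(\kappa)} a_{\beta,i}^2 = 2^{2\beta+1}\sum_{k=1}^{K_j(\kappa)} k^{2\beta}.
\end{align*}
Using the elementary Riemann-sum asymptotic $\sum_{k=1}^K k^{\alpha}\sim K^{\alpha+1}/(\alpha+1)$ and $K_j(\kappa)\sim\tfrac12(\lambda_j/\kappa)^{1/\beta}$, I obtain
\begin{align*}
\sum_{i=1}^{N_j(\kappa)} a_{\beta,i} \sim \frac{(\lambda_j/\kappa)^{(\beta+1)/\beta}}{\beta+1}, \qquad
\sum_{i=1}^{N_j(\kappa)} a_{\beta,i}^2 \sim \frac{(\lambda_j/\kappa)^{(2\beta+1)/\beta}}{2\beta+1}.
\end{align*}
Summing over $j$ and using $\sum_{j=1}^p \lambda_j^{(\beta+1)/\beta}=\operatorname{tr}(Q^{(\beta+1)/\beta})$ (and the identity $\lambda_j^{-1}\cdot\lambda_j^{(2\beta+1)/\beta}=\lambda_j^{(\beta+1)/\beta}$), the identity \eqref{kappa_eq2} becomes, after clearing the denominator,
\begin{align*}
\kappa = \varepsilon^{2}\,\operatorname{tr}(Q^{(\beta+1)/\beta})\!\left(\frac{1}{\beta+1}-\frac{1}{2\beta+1}\right)\kappa^{-(\beta+1)/\beta}(1+o(1)),
\end{align*}
so that $\kappa^{(2\beta+1)/\beta} = \varepsilon^{2}\,\tfrac{\beta\,\operatorname{tr}(Q^{(\beta+1)/\beta})}{(\beta+1)(2\beta+1)}(1+o(1))$, which gives $\kappa=\kappa_*(1+o(1))$ on taking the $\beta/(2\beta+1)$ power.

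To make the substitution rigorous I would run a squeeze argument using the strict monotonicity of $\kappa^{-1}g(\kappa)$ established in the proof of Lemma~\ref{lem_kappa}. For arbitrary $\delta\in(0,1)$, I would evaluate $\kappa^{-1}g(\kappa)$ at $\kappa=\kappa_*(1\pm\delta)$ using the asymptotics above (now applied with $\kappa$ of the specified order in $\varepsilon$, so that the Riemann-sum remainders are genuinely $o(1)$), and show that for all sufficiently small $\varepsilon$ the quantity $\kappa^{-1}g(\kappa)-1$ is strictly positive at $\kappa_*(1-\delta)$ and strictly negative at $\kappa_*(1+\delta)$. Strict monotonicity then forces the true solution $\kappa$ into the interval $(\kappa_*(1-\delta),\kappa_*(1+\delta))$, and letting $\delta\to 0$ yields $\kappa/\kappa_*\to 1$.

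The main obstacle I expect is keeping the Riemann-sum error terms uniform in the relevant range of $\kappa$ and verifying that, under the nonlinear composition dictated by \eqref{kappa_eq2}, they remain $o(1)$ after cancellation: the leading terms in numerator and denominator of that identity partially cancel, and a naive $O$-bound would only give the correct order of $\kappa$, not the sharp constant $\kappa_*$. Handling the floor function in $K_j(\kappa)$ (so that $N_j(\kappa)=2K_j(\kappa)+1$ contributes only $O(\kappa^{-1/\beta})$-lower order errors to each sum) and tracking the $(1+o(1))$ factors through the division in \eqref{kappa_eq2} are the places where care is needed.
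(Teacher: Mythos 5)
Your proposal is correct and follows essentially the same route as the paper: both exploit the pair structure $a_{\beta,2k}=a_{\beta,2k+1}=(2k)^{\beta}$, apply $\sum_{k=1}^{K}k^{\alpha}\sim K^{\alpha+1}/(\alpha+1)$ to the truncated sums, and arrive at $\kappa^{(2\beta+1)/\beta}=\varepsilon^{2}\,\beta\,\mathrm{tr}(Q^{(\beta+1)/\beta})/((\beta+1)(2\beta+1))\,(1+o(1))$, the only cosmetic difference being that you substitute into \eqref{kappa_eq2} while the paper expands \eqref{kappa_eq} directly. Your explicit check that $\kappa\to 0$ (so that $N_j(\kappa)\to\infty$ and the Riemann-sum asymptotics apply to the actual solution) and the monotonicity squeeze at $\kappa_*(1\pm\delta)$ are sound refinements of steps the paper leaves implicit, not a different method.
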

\begin{proof}
	Since $a_{\beta,i} \geq 0$ is non-decreasing and $a_{\beta,i} \to \infty$ as $i \to \infty$, the equation \eqref{kappa_eq} has a unique solution from Lemma~\ref{lem_kappa}.
	By putting $M_j = \max \{ i \mid 1- a_{\beta,i} \kappa \lambda_j^{-1} > 0 \} = \lfloor (\kappa \lambda_j^{-1})^{-1/\beta}/2 \rfloor$, the left hand side of \eqref{kappa_eq} is rewritten as
	\begin{align*}
		&\varepsilon^2 \kappa^{-1} \sum_{j=1}^p \sum_{i=1}^{\infty} a_{\beta,i} (1-a_{\beta,i} \kappa \lambda_j^{-1})_+ = 2 \varepsilon^2 \kappa^{-1} \sum_{j=1}^p \sum_{m=1}^{M_j} (2m)^{\beta} (1-(2m)^{\beta} \kappa \lambda_j^{-1}) \\
		=& 2 \varepsilon^2 \kappa^{-1} \sum_{j=1}^p \left( \frac{2^{\beta} M_j^{\beta+1}}{\beta + 1} - \frac{4^{\beta} M_j^{2 \beta+1}}{2 \beta + 1} \kappa \lambda_j^{-1} \right) \\
		=& \frac{\beta \varepsilon^2}{(\beta+1)(2 \beta+1)} \left( \sum_{j=1}^p \lambda_j^{-1} (\kappa \lambda_j^{-1})^{-(2 \beta+1)/{\beta}} \right) (1+o(1)) \\
		=& \frac{\beta \varepsilon^2}{(\beta+1)(2 \beta+1)} {\rm tr} (Q^{(\beta+1)/{\beta}}) \kappa^{-(2 \beta+1)/{\beta}} (1+o(1)) = \left( \frac{\kappa_*}{\kappa} \right)^{(2 \beta+1)/{\beta}} (1+o(1))
	\end{align*}
	as $\varepsilon \to 0$, where we used 
	\begin{align}
		\sum_{m=1}^{M} m^a = \frac{M^{a+1}}{a+1} (1+o(1)) \label{geo_sum}
	\end{align}
	as $M \to \infty$.
	Therefore, $\kappa = \kappa_{*} (1+o(1))$ as $\varepsilon \to 0$.
\end{proof}

Consider estimation of $\theta=(\theta_i)$ in the multivariate Gaussian sequence model \eqref{mGS} under the quadratic loss $L_Q$ in \eqref{lossQ}. 
The risk function of an estimator $\hat{\theta}$ is
\begin{align*}
	R_Q(\theta,\hat{\theta}) = {\rm E}_{\theta} \left[ \sum_{i=1}^{\infty}  (\hat{\theta}_i - \theta_i)^{\top} Q (\hat{\theta}_i - \theta_i) \right].
\end{align*}
Suppose that $\theta$ is restricted to the multivariate Sobolev ellipsoid $\Theta (\beta,Q)$ in \eqref{ellipsoid2}.
From Lemma~\ref{lem_kappa2}, the equation \eqref{kappa_eq} with $a_i=a_{\beta,i}$ has a unique solution of  $\kappa>0$. 
By using this $\kappa$, we define a linear estimator $\hat{\theta}_{\mathrm{P}}=(\hat{\theta}_{\mathrm{P},i})$ by
\begin{align}
	\hat{\theta}_{\mathrm{P},i} = C_{\mathrm{P},i} y_i, \quad C_{\mathrm{P},i} = (I_p - a_{\beta,i} \kappa Q^{-1})_+,\label{mPinsker}
\end{align}
where $( \cdot )_+$ denotes the projection onto the positive semidefinite cone introduced above.
This estimator can be viewed as a generalization of Pinsker's estimator (Proposition~\ref{prop_pinsker}).

\begin{lemma}\label{lem_pinsker}
	For $C_{\mathrm{P}}=(C_{\mathrm{P},i})$ in \eqref{mPinsker},
	\begin{align}
		\varepsilon^2 \sum_{i=1}^{\infty} {\rm tr} (C_{\mathrm{P},i} Q) = P(\beta,Q) \varepsilon^{{4 \beta}/(2 \beta+1)} (1+o(1)) \label{asymp_risk}
	\end{align}
	as $\varepsilon \to 0$, where
	\begin{align}
		P(\beta,Q) := (2 \beta+1)^{1/(2 \beta+1)} \left( \frac{\beta {\rm tr} (Q^{(\beta+1)/{\beta}})}{\beta+1} \right)^{{2 \beta}/(2 \beta+1)}. \label{Pdef}
	\end{align}
\end{lemma}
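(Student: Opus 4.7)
The plan is to diagonalize $Q$, reduce the matrix trace to a sum over eigenvalues, and then apply the same asymptotic evaluation technique used in the proof of Lemma~\ref{lem_kappa2}, before substituting the leading order of $\kappa$ to identify the constant $P(\beta,Q)$.

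First, I would fix a spectral decomposition $Q = V^{\top}\mathrm{diag}(\lambda_1,\dots,\lambda_p)V$ with $V \in O(p)$. Since $(I_p - a_{\beta,i}\kappa Q^{-1})_+ = V^{\top}\mathrm{diag}((1-a_{\beta,i}\kappa\lambda_j^{-1})_+)V$, taking the trace against $Q$ gives
\begin{align*}
\mathrm{tr}(C_{\mathrm{P},i}Q) = \sum_{j=1}^p \lambda_j (1-a_{\beta,i}\kappa\lambda_j^{-1})_+,
\end{align*}
so swapping the order of summation reduces the quantity of interest to $\varepsilon^2 \sum_{j=1}^p \lambda_j \sum_{i=1}^\infty (1-a_{\beta,i}\kappa\lambda_j^{-1})_+$. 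This decouples the problem into $p$ one-dimensional sums, each having the exact form that appears in the classical scalar Pinsker analysis.

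Next, using the pairing $a_{\beta,2k}=a_{\beta,2k+1}=(2k)^\beta$ together with $a_{\beta,1}=0$, the inner sum becomes $1 + 2\sum_{k=1}^{M_j}(1-(2k)^\beta \kappa\lambda_j^{-1})$ where $M_j = \lfloor (\kappa\lambda_j^{-1})^{-1/\beta}/2 \rfloor$. Applying \eqref{geo_sum} to both $\sum_{k=1}^{M_j}1$ and $\sum_{k=1}^{M_j}k^\beta$, the inside telescopes to $(\kappa\lambda_j^{-1})^{-1/\beta}\bigl(1-\tfrac{1}{\beta+1}\bigr)(1+o(1)) = \tfrac{\beta}{\beta+1}\kappa^{-1/\beta}\lambda_j^{1/\beta}(1+o(1))$ as $\varepsilon \to 0$ (since $\kappa \to 0$ by Lemma~\ref{lem_kappa2}, forcing $M_j \to \infty$ for every $j$). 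Summing over $j$ and multiplying by $\varepsilon^2$ yields
\begin{align*}
\varepsilon^2 \sum_{i=1}^\infty \mathrm{tr}(C_{\mathrm{P},i}Q) = \frac{\beta}{\beta+1}\varepsilon^2 \kappa^{-1/\beta}\,\mathrm{tr}(Q^{(\beta+1)/\beta})\,(1+o(1)).
\end{align*}

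Finally, I would substitute $\kappa = \kappa_{*}(1+o(1))$ from Lemma~\ref{lem_kappa2}. Since $\varepsilon^2 \kappa_*^{-1/\beta} = \varepsilon^{4\beta/(2\beta+1)}\bigl(\tfrac{(\beta+1)(2\beta+1)}{\beta\,\mathrm{tr}(Q^{(\beta+1)/\beta})}\bigr)^{1/(2\beta+1)}$, plugging this in and collecting the powers of $\mathrm{tr}(Q^{(\beta+1)/\beta})$, $\beta$, $\beta+1$, and $2\beta+1$ gives exactly $P(\beta,Q)\varepsilon^{4\beta/(2\beta+1)}$ with $P(\beta,Q)$ as defined in \eqref{Pdef}, completing the proof. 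There is no deep obstacle; the only care needed is bookkeeping the $o(1)$ remainders from the approximation \eqref{geo_sum} and from $\kappa \sim \kappa_*$, both of which can be absorbed into a single $(1+o(1))$ factor because $M_j \to \infty$ uniformly in $j$ (as $\lambda_j$ ranges over a fixed finite set).
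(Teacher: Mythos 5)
Your proposal is correct and follows essentially the same route as the paper's proof: diagonalize $Q$, reduce $\mathrm{tr}(C_{\mathrm{P},i}Q)$ to eigenvalue-wise sums over the paired Sobolev coefficients, evaluate them via \eqref{geo_sum} to get $\varepsilon^2\tfrac{\beta}{\beta+1}\kappa^{-1/\beta}\mathrm{tr}(Q^{(\beta+1)/\beta})(1+o(1))$, and then substitute $\kappa\sim\kappa_*$ from Lemma~\ref{lem_kappa2}. Your explicit tracking of the $i=1$ term (absorbed into the $o(1)$) is a minor extra care the paper omits, but otherwise the arguments coincide.
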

\begin{proof}
	Let $\lambda_1 \geq \dots \geq \lambda_p > 0$ be the eigenvalues of $Q$ and $M_j = \max \{ i \mid 1- a_{\beta,i} \kappa \lambda_j^{-1} > 0 \} = \lfloor (\kappa \lambda_j)^{-1/\beta}/2 \rfloor$ for $j=1,\dots,p$.
	Then,
	\begin{align*}
		\varepsilon^2 \sum_{i=1}^{\infty} {\rm tr} (C_{\mathrm{P},i} Q) &= 2 \varepsilon^2 \sum_{j=1}^p \sum_{m=1}^{M_j} (1-(2m)^{\beta} \kappa \lambda_j^{-1}) \lambda_j \\
		&= 2 \varepsilon^2 \sum_{j=1}^p \left( M_j - 2^{\beta} \frac{M_j^{\beta+1}}{\beta+1} \kappa \lambda_j^{-1} \right) \lambda_j (1+o(1)) \\
		&= \varepsilon^2 \frac{\beta}{\beta+1} \kappa^{-1/\beta} {\rm tr} (Q^{(\beta+1)/{\beta}}) (1+o(1))
	\end{align*}
	as $\varepsilon \to 0$, where we used \eqref{geo_sum}.
	Therefore, from Lemma~\ref{lem_kappa2}, we obtain \eqref{asymp_risk}.
\end{proof}

\begin{theorem}\label{th_pinsker}
	The estimator $\hat{\theta}_{\mathrm{P}}$ in \eqref{mPinsker} is asymptotically minimax under $L_Q$ in \eqref{lossQ} on the multivariate Sobolev ellipsoid $\Theta(\beta,Q)$ in \eqref{ellipsoid2}:
	\begin{align*}
		\sup_{\theta \in \Theta(\beta,Q)} R_Q(\theta,\hat{\theta}_{\mathrm{P}}) &\sim \inf_{\hat{\theta}} \sup_{\theta \in \Theta(\beta,Q)} R_Q(\theta,\hat{\theta}) \sim P(\beta,Q) \varepsilon^{{4 \beta}/{(2 \beta+1)}}
	\end{align*}
	as $\varepsilon \to 0$, where $\inf$ is taken over all estimators and  $P(\beta,Q)$ is given by \eqref{Pdef}.
\end{theorem}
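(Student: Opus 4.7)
The plan is to match an upper bound on the sup-risk of $\hat{\theta}_{\mathrm{P}}$ with a lower bound on the minimax risk, both equal to $P(\beta,Q)\varepsilon^{4\beta/(2\beta+1)}(1+o(1))$. The upper bound is a direct computation in which the Pinsker equation \eqref{kappa_eq} enforces a precise cancellation between variance and worst-case squared bias; the lower bound uses the Gaussian prior that makes $\hat{\theta}_{\mathrm{P}}$ the exact Bayes rule, together with a truncation step that pushes the prior essentially inside the ellipsoid.

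\emph{Upper bound.} Since $C_{\mathrm{P},i}=(I_p-a_{\beta,i}\kappa Q^{-1})_+$ commutes with $Q$, diagonalize $Q={\rm diag}(\lambda_1,\ldots,\lambda_p)$ and write $c_{ij}=(1-a_{\beta,i}\kappa/\lambda_j)_+$ in that basis. The risk of the linear estimator splits as
$$R_Q(\theta,\hat{\theta}_{\mathrm{P}})=\varepsilon^2\sum_i{\rm tr}(C_{\mathrm{P},i}^2 Q)+\sum_{i,j}\lambda_j(1-c_{ij})^2\theta_{ij}^2,$$
and the supremum over the ellipsoid constraint $\sum_{i,j}a_{\beta,i}^2\theta_{ij}^2/\lambda_j\le 1$ is the linear-programming value $\max_{i,j}\lambda_j^2(1-c_{ij})^2/a_{\beta,i}^2=\kappa^2$, the maximum being attained at any index with $i\le N_j(\kappa)$. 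A short manipulation using \eqref{kappa_eq} yields the identity $\varepsilon^2\sum_i{\rm tr}(C_{\mathrm{P},i}^2 Q)=\varepsilon^2\sum_i{\rm tr}(C_{\mathrm{P},i}Q)-\kappa^2$, so the worst-case bias exactly fills the gap and $\sup_\theta R_Q(\theta,\hat{\theta}_{\mathrm{P}})=\varepsilon^2\sum_i{\rm tr}(C_{\mathrm{P},i}Q)$, which by Lemma~\ref{lem_pinsker} is $P(\beta,Q)\varepsilon^{4\beta/(2\beta+1)}(1+o(1))$.

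\emph{Lower bound.} Define the product Gaussian prior $\pi$ by $\theta_i\sim\mathrm{N}_p(0,V_i)$ independent, with $V_i$ commuting with $Q$ and eigenvalues $v_{ij}=\varepsilon^2 c_{ij}/(1-c_{ij})$ (set $v_{1j}$ large but finite to handle $a_{\beta,1}=0$; the $i=1$ coordinate contributes only $O(\varepsilon^2)$, which is lower order than $\varepsilon^{4\beta/(2\beta+1)}$). A direct Bayes-rule computation shows that $\hat{\theta}_{\mathrm{P}}$ is the posterior mean under $\pi$ and that the Bayes risk equals $\varepsilon^2\sum_i{\rm tr}(C_{\mathrm{P},i}Q)$, matching the upper bound. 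Moreover, the definition of $v_{ij}$ combined with \eqref{kappa_eq} gives $\mathrm{E}_\pi[\sum_i a_{\beta,i}^2\theta_i^\top Q^{-1}\theta_i]=1$, so $\pi$ puts its mass near the boundary of $\Theta(\beta,Q)$. Shrinking to the prior $\pi_\delta$ with covariances $(1-\delta)^2 V_i$ and applying a Hanson--Wright-type concentration inequality (analogous to Lemma~\ref{lem_concentration}) for the quadratic form $\sum_i a_{\beta,i}^2\theta_i^\top Q^{-1}\theta_i$ yields $\pi_\delta(\Theta(\beta,Q))\to 1$. A standard decomposition of the Bayes risk into the parts where $\theta\in\Theta$ and $\theta\notin\Theta$, together with the observation that any candidate estimator can be replaced by its composition with a projection onto a large $L_2$-ball at lower-order cost, gives $\inf_{\hat{\theta}}\sup_{\theta\in\Theta(\beta,Q)}R_Q(\theta,\hat{\theta})\geq(1-o(1))P(\beta,Q)\varepsilon^{4\beta/(2\beta+1)}$ after letting $\delta\downarrow 0$ slowly with $\varepsilon\downarrow 0$.

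The delicate step is the truncation in the lower bound: because the quadratic loss is unbounded in $\theta$, it is not enough to know that $\pi_\delta(\Theta^c)$ is small; one must also control the Bayes-risk contribution from $\{\theta\notin\Theta\}$. Making this quantitative in the infinite-dimensional multivariate setting, uniformly over all estimators and with the direction $i=1$ unconstrained by the ellipsoid, is where the concentration lemma and the truncation/comparison argument do essentially all of the work.
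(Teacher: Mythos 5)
Your proposal is correct and follows essentially the same route as the paper: the upper bound via the bias--variance split, the worst-case bias $\kappa^2$ over the ellipsoid, and the cancellation identity coming from \eqref{kappa_eq}; the lower bound via the near-least-favorable Gaussian prior with eigenvariances $\varepsilon^2 c_{ij}/(1-c_{ij})$ shrunk slightly inside the ellipsoid, a chi-square tail bound (the paper's Lemma~\ref{lem_chi2}, rather than Lemma~\ref{lem_concentration}, which supplies the second-moment bound), and a convex-truncation argument to control the unbounded loss off the ellipsoid. The only substantive difference concerns the wrinkle you yourself flag: since $a_{\beta,1}=0$ makes $\Theta(\beta,Q)$ unbounded in the $i=1$ direction, no $L_2$-ball contains it, so the paper instead sets $\theta_1=0$, truncates to coordinates $2,\dots,N$, and restricts attention to estimators valued in the bounded convex truncated ellipsoid $\Phi$ (losing only a lower-order $O(\varepsilon^2)$ term), after which your decomposition goes through verbatim.
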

\begin{proof}
	We write $a_i=a_{\beta,i}$ for convenience.
	Let $Q=U^{\top} \Lambda U$ be a spectral decomposition of $Q$ with $\Lambda={\rm diag}(\lambda_1,\dots,\lambda_p)$. 
	Then, $C_{\mathrm{P},i} = U^{\top} (I_p-a_i \kappa \Lambda^{-1})_+ U$.
	Therefore, by using $(1-(1-l)_+)^2 \leq l^2$,
	\begin{align}
		(I_p-C_{\mathrm{P},i})Q(I_p-C_{\mathrm{P},i}) &= U^{\top} (I_p-(I_p-a_i \kappa \Lambda^{-1})_+)^2 \Lambda U \nonumber \\
		&\preceq U^{\top} \cdot a_i^2 \kappa^2 \Lambda^{-2} \cdot \Lambda \cdot U =a_i^2 \kappa^2 Q^{-1}. \label{mat_ineq}
	\end{align}
	Also, by using $a_i \kappa \lambda_j^{-1} + (1-a_i \kappa \lambda_j^{-1})_+=1$ when $(1-a_i \kappa \lambda_j^{-1})_+ > 0$,
	\begin{align}\label{lem_risk}
		\sum_{i=1}^{\infty} {\rm tr} (C_{\mathrm{P},i} Q (a_i \kappa Q^{-1} + C_{\mathrm{P},i})) &= \sum_{i=1}^{\infty} \sum_{j=1}^p (1-a_i \kappa \lambda_j^{-1})_+ \lambda_j (a_i \kappa \lambda_j^{-1} + (1-a_i \kappa \lambda_j^{-1})_+) \nonumber \\
		&= \sum_{i=1}^{\infty} \sum_{j=1}^p (1-a_i \kappa \lambda_j^{-1})_+ \lambda_j = \sum_{i=1}^{\infty} {\rm tr} (C_{\mathrm{P},i} Q).
	\end{align}
	Similarly,
	\begin{align}\label{lem_risk_inv}
		\sum_{i=1}^{\infty} {\rm tr} (C_{\mathrm{P},i} Q (a_i \kappa Q^{-1} + C_{\mathrm{P},i})^{-1}) &= \sum_{i=1}^{\infty} {\rm tr} (C_{\mathrm{P},i} Q).
	\end{align}
	
	Now, from
	\begin{align*}
		{\rm E}_{\theta} \left[ (\hat{\theta}_{\mathrm{P},i}-\theta_i) (\hat{\theta}_{\mathrm{P},i}-\theta_i)^{\top} \right] = (I_p-C_{\mathrm{P},i}) \theta_i \theta_i^{\top} (I_p-C_{\mathrm{P},i}) + \varepsilon^2 C_{\mathrm{P},i}^2
	\end{align*}
	and \eqref{mat_ineq}, we obtain
	\begin{align*}
		R_Q (\theta,\hat{\theta}_{\mathrm{P}}) &= \sum_{i=1}^{\infty} \theta_i^{\top} (I_p-C_{\mathrm{P},i}) Q (I_p-C_{\mathrm{P},i}) \theta_i + \varepsilon^2 \sum_{i=1}^{\infty} {\rm tr} (C_{\mathrm{P},i}^2 Q) \\
		&\leq \kappa^2 \sum_{i=1}^{\infty} a_i^2  \theta_i^{\top} Q^{-1} \theta_i + \varepsilon^2 \sum_{i=1}^{\infty} {\rm tr} (C_{\mathrm{P},i}^2 Q) \\
		&\leq \kappa^2 + \varepsilon^2 \sum_{i=1}^{\infty} {\rm tr} (C_{\mathrm{P},i}^2 Q)
	\end{align*}
	for $\theta \in \Theta(\beta,Q)$.
	By using \eqref{kappa_eq} and \eqref{lem_risk},
	\begin{align*}
		\kappa^2 + \varepsilon^2 \sum_{i=1}^{\infty} {\rm tr} (C_{\mathrm{P},i}^2 Q) &= \varepsilon^2 \kappa \sum_{i=1}^{\infty} a_i  {\rm tr} (I_p-a_i \kappa Q^{-1})_+ + \varepsilon^2 \sum_{i=1}^{\infty} {\rm tr} (C_{\mathrm{P},i}^2 Q) \\
		&= \varepsilon^2 \sum_{i=1}^{\infty} {\rm tr} (C_{\mathrm{P},i} Q (a_i \kappa Q^{-1} + C_{\mathrm{P},i})) \\
		&= \varepsilon^2 \sum_{i=1}^{\infty} {\rm tr} (C_{\mathrm{P},i} Q).
	\end{align*}
	Therefore,  
	\begin{align}
		\inf_{\hat{\theta}} \sup_{\theta \in \Theta(\beta,Q)} R_Q (\theta,\hat{\theta}) &\leq \sup_{\theta \in \Theta(\beta,Q)} R_Q (\theta,\hat{\theta}_{\mathrm{P}}) \leq \varepsilon^2 \sum_{i=1}^{\infty} {\rm tr} (C_{\mathrm{P},i} Q). \label{upper_bound}
	\end{align}
	
	Now, we consider the minimax lower bound. Let $N=N_1(\kappa)=\max \{ i \mid 1- a_i \kappa \lambda_1^{-1} > 0 \}$ and
	\begin{align*}
		\Theta_N(\beta,Q) := \left\{ \theta = (\theta_1,\theta_2,\dots) \relmiddle| \theta_1=0; \ \sum_{i=2}^{N} a_i^2 \theta_i^{\top} Q^{-1} \theta_i \leq 1; \  \theta_i = 0, i > N \right\}, \end{align*}
	where $\theta_1=0$ comes from $a_1=0$.
	Then, from $\Theta_N(\beta,Q) \subset \Theta(\beta,Q)$,
	\begin{align}
		\inf_{\hat{\theta}} \sup_{\theta \in \Theta(\beta,Q)} R_Q (\theta,\hat{\theta}) &\geq \inf_{\hat{\theta}} \sup_{\theta \in \Theta_N(\beta,Q)} R_Q (\theta,\hat{\theta}) \nonumber \\
		&= \inf_{\hat{\theta}} \sup_{\theta \in \Theta_N(\beta,Q)} {\rm E}_{\theta} \left[ \sum_{i=2}^N  (\hat{\theta}_i - \theta_i)^{\top} Q (\hat{\theta}_i - \theta_i) \right], \label{lower_bound}
	\end{align}
	where the equality follows because the infimum is attained by an estimator such that $\hat{\theta}_i=0$ for $i=1$ and $i>N$.
	Thus, we restrict our attention to estimation of $\theta^N=(\theta_2,\dots,\theta_N)$ in the following.
	Let $\delta \in (0,1)$ and $\pi$ be the prior on $\mathbb{R}^{N}$ such that $\theta_i \sim {\rm N}_p(0,\delta^2 V_i)$ independently with $V_i=\varepsilon^2 a_i^{-1}  \kappa^{-1} Q (I_p-a_i  \kappa Q^{-1})_+$ for $i=2,\dots,N$.
	From Lemma~\ref{lem_bayes_risk}, the Bayes estimator with respect to this prior (posterior mean) is
	\begin{align*}
		\hat{\theta}^{\pi}_i = C^{\pi}_i x_i, \quad C^{\pi}_i = \delta^2 V_i (\delta^2 V_i + \varepsilon^2 I_p)^{-1}, 
	\end{align*}
	for $i=2,\dots,N$, and its Bayes risk is
	\begin{align}
		r(\pi) = \delta^2 \varepsilon^2 \sum_{i=2}^{N} {\rm tr} (Q V_i (\delta^2 V_i + \varepsilon^2 I_p)^{-1}) &\geq \delta^2 \varepsilon^2 \sum_{i=2}^{N} {\rm tr} (Q V_i (V_i + \varepsilon^2 I_p)^{-1}) \nonumber \\
		&= \delta^2 \varepsilon^2 \sum_{i=1}^{\infty} {\rm tr} (C_{\mathrm{P},i} Q (a_i \kappa Q^{-1} + C_{\mathrm{P},i})^{-1}) \nonumber \\
		&= \delta^2 \varepsilon^2 \sum_{i=1}^{\infty} {\rm tr} (C_{\mathrm{P},i} Q), \label{bayes_risk}
	\end{align}
	where we used \eqref{lem_risk_inv}.
	Let 
	\begin{align*}
		\Phi := \left\{ \theta^N = (\theta_2,\dots,\theta_N) \relmiddle| \sum_{i=2}^{N} a_i^2 \theta_i^{\top} Q^{-1} \theta_i \leq 1 \right\} \subset \mathbb{R}^{p(N-1)}
	\end{align*}
	be the truncation of $\Theta_N(a,Q)$ and $\mathbb{E}$ be the class of estimators $\hat{\theta}^N$ of ${\theta}^N = ({\theta}_2,\dots,{\theta}_N)$ that take values in $\Phi$.
	Since $\Phi$ is closed and convex, we can restrict our attention to estimators in $\mathbb{E}$ in considering the lower bound for the minimax risk over $\Phi$.
	Let
	\begin{align*}
		R_{N,Q}(\theta^N,\hat{\theta}^N) := {\rm E}_{\theta^N} \left[ \sum_{i=2}^{N}  (\hat{\theta}_i - \theta_i)^{\top} Q (\hat{\theta}_i - \theta_i) \right].
	\end{align*}
	Then, for any estimator $\hat{\theta}^N \in \mathbb{E}$, from the definition of the Bayes risk,
	\begin{align*}
		r(\pi) &\leq \int R_{N,Q}(\theta^N,\hat{\theta}^N) \pi(\theta^N) {\rm d} \theta^N \\
		& \leq \sup_{\theta^N \in \Phi} R_{N,Q}(\theta^N,\hat{\theta}^N) + \sup_{\hat{\theta}^N \in \mathbb{E}} \int_{\Phi^c} R_{N,Q}(\theta^N,\hat{\theta}^N) \pi(\theta^N) {\rm d} \theta^N,
	\end{align*}
	where $\Phi^c=\mathbb{R}^{p(N-1)} \setminus \Phi$.
	Taking the infimum over all estimators in $\mathbb{E}$,
	\begin{align*}
		r(\pi) & \leq \inf_{\hat{\theta}^N \in \mathbb{E}} \sup_{\theta^N \in \Phi} R_{N,Q}(\theta^N,\hat{\theta}^N) + \sup_{\hat{\theta}^N \in \mathbb{E}} \int_{\Phi^c} R_{N,Q}(\theta^N,\hat{\theta}^N) \pi(\theta^N) {\rm d} \theta^N,
	\end{align*}
	which yields
	\begin{align}
		\inf_{\hat{\theta}^N \in \mathbb{E}} \sup_{\theta^N \in \Phi} R_{N,Q}(\theta^N,\hat{\theta}^N) \geq r(\pi) - \sup_{\hat{\theta}^N \in \mathbb{E}} \int_{\Phi^c} R_{N,Q}(\theta^N,\hat{\theta}^N) \pi(\theta^N) {\rm d} \theta^N. \label{lower_bound2}
	\end{align}
	
	Now, from $(a+b)^{\top} Q (a+b) \leq 2 (a^{\top} Q a + b^{\top} Q b)$ and the Cauchy-Schwarz inequality,
	\begin{align}
		&\sup_{\hat{\theta}^N \in \mathbb{E}} \int_{\Phi^c} R_{N,Q}(\theta^N,\hat{\theta}^N) \pi(\theta^N) {\rm d} \theta^N \nonumber \\
		\leq& 2 \int_{\Phi^c} \left( \sum_{i=2}^{N} \theta_i^{\top} Q \theta_i \right) \pi(\theta^N) {\rm d} \theta^N + 2 \sup_{\hat{\theta}^N \in \mathbb{E}} \int_{\Phi^c} {\rm E}_{\theta^N} \left[ \sum_{i=2}^{N} \hat{\theta}_i^{\top} Q \hat{\theta}_i  \right] \pi(\theta^N) {\rm d} \theta^N \nonumber \\
		\leq& 2 (\pi(\Phi^c))^{1/2} {\rm E}_{\pi} \left[ \left( \sum_{i=1}^N \theta_i^{\top} Q \theta_i \right)^2 \right]^{1/2} + 2 M \pi (\Phi^c), \label{bound}
	\end{align}
	where
	\begin{align*}
		M := \max_{\theta^N \in \Phi} \sum_{i=2}^N {\theta}_i^{\top} Q {\theta}_i = {\lambda_1^2}{a_2^{-2}} = O(1).
	\end{align*}
	Let $\xi_i = \delta^{-1} V_i^{-1/2} \theta_i \sim {\rm N}_p(0,I_p)$ and $V_i^{1/2} Q^{-1} V_i^{1/2} = U^{\top} \Lambda_i U$ with $\Lambda_i={\rm diag}(\lambda_{i1},\dots,\lambda_{ip})$ be a spectral decomposition of $V_i^{1/2} Q^{-1} V_i^{1/2}$.
	Then,
	\begin{align*}
		\theta_i^{\top} Q^{-1} \theta_i = \delta^2 \xi_i^{\top} V_i^{1/2} Q^{-1} V_i^{1/2} \xi_i = \delta^2 (U \xi_i)^{\top} \Lambda_i (U \xi_i) = \delta^2 \sum_{j=1}^p \lambda_{ij} (U {\xi}_i)_j^2.
	\end{align*}
	Therefore, by putting $b_{ij}^2 = \delta^2 a_i^2 \lambda_{ij} = \delta^2 \varepsilon^2 \kappa^{-1} a_i (1-a_i \kappa \lambda_j^{-1})_+$,
	\begin{align*}
		\pi (\Phi^c) & = {\rm Pr} \left[ \sum_{i=2}^N a_i^2 \theta_i^{\top} Q^{-1} \theta_i > 1 \right] \nonumber \\
		&= {\rm Pr} \left[ \sum_{i=2}^N \sum_{j=1}^p b_{ij}^2 ((U\xi_i)_j^2-1) > \frac{1-\delta^2}{\delta^2} \sum_{i=2}^N \sum_{j=1}^p b_{ij}^2 \right],
	\end{align*}
	where we used $\sum_{i,j} b_{ij}^2 = \delta^2$.
	Since $U \xi_i \sim {\rm N}_p (0,I_p)$ independently for $i=1,\dots,N$, from Lemma~\ref{lem_chi2},
	\begin{align}
		\pi (\Phi^c) &= O \left( \exp \left( - \frac{(1-\delta^2)^2}{8 \delta^4} \frac{\sum_{i,j} b_{ij}^2}{\max_{i,j} \{ b_{ij}^2 \mid b_{ij}^2>0 \} } \right) \right) \nonumber \\
		&= O \left( \exp \left( - C \varepsilon^{-2/(2\beta+1)} \right) \right) \label{tail_prob}
	\end{align}
	for $\delta \in (1/\sqrt{2},1)$ with some constant $C>0$, where we used $b_{ij}^2 \leq \delta^2 \varepsilon^2 \kappa^{-1} a_{N_j(\kappa)} \leq \delta^2 \varepsilon^2 \kappa^{-2} \lambda_j = O(\varepsilon^{2/(2\beta+1)})$ for $i \leq N_j(\kappa)$ and $b_{ij}^2 = 0$ for $i > N_j(\kappa)$.
	Also, by putting $V_i^{1/2} Q V_i^{1/2} = U^{\top} D U$ with $D= {\rm diag}(\mu_{i1},\dots,\mu_{ip})$,
	\begin{align*}
		\theta_i^{\top} Q \theta_i = \delta^2 \xi_i^{\top} V_i^{1/2} Q V_i^{1/2} \xi_i = \delta^2 (U \xi_i)^{\top} D (U \xi_i) = \delta^2 \sum_{j=1}^p \mu_{ij} (U \xi_i)_j^2.
	\end{align*}
	Thus, from Lemma~\ref{lem_concentration},
	\begin{align}
		{\rm E}_{\pi} \left[ \left( \sum_{i=2}^N \theta_i^{\top} Q \theta_i \right)^2 \right] &\leq 3 \delta^4  \left( \sum_{i=2}^N \sum_{j=1}^p \mu_{ij} \right)^2 \nonumber \\
		&= 3 \delta^4 \left( \sum_{i=2}^N {\rm tr} (Q V_i) \right)^2 \nonumber \\
		&\leq 3 \delta^4 a_2^{-4} \left( \sum_{i=2}^N a_i^2 {\rm tr} (Q V_i) \right)^2 \nonumber \\
		&= 3 \delta^4 a_2^{-4} \left( \varepsilon^2 \kappa^{-1} \sum_{i=2}^N a_i {\rm tr} (C_{\mathrm{P},i} Q^2) \right)^2 \nonumber \\
		&\leq 3 \delta^4 \lambda_1^4 a_2^{-4} \left( \varepsilon^2 \kappa^{-1} \sum_{i=2}^N a_i {\rm tr} (C_{\mathrm{P},i}) \right)^2 \nonumber \\
		&= 3 \delta^4 \lambda_1^4 a_2^{-4} \nonumber \\
		&= O(1). \label{finite_exp}
	\end{align}
	Substituting \eqref{tail_prob} and \eqref{finite_exp} into \eqref{bound} yields
	\begin{align*}
		&\sup_{\hat{\theta}^N \in \mathbb{E}} \int_{\Phi^c} R(\theta^N,\hat{\theta}^N) \pi(\theta^N) {\rm d} \theta^N = O \left( \exp \left( - C \varepsilon^{-2/(2\beta+1)} \right) \right).
	\end{align*}
	Combining it with \eqref{lower_bound}, \eqref{bayes_risk} and \eqref{lower_bound2} and using Lemma~\ref{lem_pinsker}, we obtain
	\begin{align*}
		\inf_{\hat{\theta}} \sup_{\theta \in \Theta(\beta,Q)} R_Q(\theta,\hat{\theta}) \geq \delta^2 \varepsilon^2 \sum_{i=1}^{\infty} {\rm tr} (C_{\mathrm{P},i} Q) (1+o(1)).
	\end{align*}
	By taking $\delta \to 1$, 
	\begin{align}
		\inf_{\hat{\theta}} \sup_{\theta \in \Theta(\beta,Q)} R_Q(\theta,\hat{\theta}) &\geq \varepsilon^2 \sum_{i=1}^{\infty} {\rm tr} (C_{\mathrm{P},i} Q) (1+o(1)). \label{minimax_lower_bound}
	\end{align}
	
	From \eqref{upper_bound}, \eqref{minimax_lower_bound} and Lemma~\ref{lem_pinsker}, we obtain the theorem.
\end{proof}

Theorem~\ref{th_pinsker} implies that the multivariate Sobolev ellipsoid $\Theta(\beta,c Q)$ with $\beta>0$ and $c>0$ is a canonical parameter space when the quadratic loss $L_Q$ is adopted, in the sense that a linear estimator attains the asymptotic minimaxity.
Note that the (asymptotic) minimaxity under $L_Q$ is equivalent to the (asymptotic) minimaxity under $L_{aQ}$ for every $a>0$ since $L_{aQ}=a L_Q$.
For the parameter space $\sum_{i=1}^{\infty} a_{\beta,i}^2 \theta_i^{\top} Q^{-1} \theta_i \leq K$, the asymptotic minimax risk for the loss $L_Q$ is given by 
\[
R^{-1} P(\beta,KQ) \varepsilon^{4\beta/(2\beta+1)} = K^{1/(2\beta+1)} P(\beta,Q) \varepsilon^{4\beta/(2\beta+1)},
\]
where we used $L_{KQ} = K L_Q$.
For example, the multichannel signal detection problem studied by \cite{Ingster,Gayraud} naturally corresponds to $K=p$.

Whereas we gave a detailed proof of Theorem~\ref{th_pinsker} for completeness, it can be viewed as a special case of a general version of Pinsker’s theorem (e.g. Theorem 5.1 in \cite{Johnstone}). 
In fact, from this viewpoint, Theorem~\ref{th_pinsker} can be extended to estimation of $\theta \in \Theta(\beta,R)$ under $L_Q$ where $Q$ and $R$ are simultaneously diagonalizable (thus commutable) as follows\footnote{We thank the referee for pointing out this.}.
Let $R=U^{\top} \Lambda U$ be a spectral decomposition of $R$.
Since $Q$ and $R$ are simultaneously diagonalizable, we have $Q=U^{\top} D U$ with a diagonal matrix $D$.
Let $\xi_i := D^{1/2} U \theta_i$.
Since 
\begin{align*}
	\sum_{i=1}^{\infty} a_i^2 \theta_i^{\top} R^{-1} \theta_i = \sum_{i=1}^{\infty} a_i^2 \xi_i^{\top} (\Lambda D)^{-1} \xi_i, 
\end{align*}
we have $\theta \in \Theta(\beta,R)$ if and only if $\xi \in \Theta(\beta,\Lambda D)$.
Also, $L_Q(\theta,\hat{\theta})=\sum_i \| \hat{\xi}_i-\xi_i \|^2$.
Rewrite
\begin{align*}
	\sum_{i=1}^{\infty} a_i^2 \xi_i^{\top} (\Lambda D)^{-1} \xi_i = \sum_{k=1}^{\infty} b_k^2 s_k^2,
\end{align*}
where $b=(b_k)$ is a non-decreasing reordering of 
\begin{align*}
	a_1^2 ((\Lambda D)^{-1})_{11},\dots,a_1^2 ((\Lambda D)^{-1})_{pp},a_2^2 ((\Lambda D)^{-1})_{11},\dots,a_2^2 ((\Lambda D)^{-1})_{pp},\dots,
\end{align*}
and $s=(s_k)$ is the same reordering of $\xi_{11},\dots,\xi_{1p}, \xi_{21},\dots,\xi_{2p},\dots$.
Let $z_k$ be the same reordering of 
\begin{align*}
	(D^{1/2} U x_1)_1,\dots,(D^{1/2} U x_1)_p,(D^{1/2} U x_2)_1,\dots,(D^{1/2} U x_2)_p,\dots.
\end{align*}
Then, $z_k \sim {\rm N}(s_k,\varepsilon_k^2)$ where $\varepsilon_k^2$ is either of $D_{11},\dots,D_{pp}$.
In this way, the problem is reduced to estimation of the mean sequence under the usual quadratic loss in the original Gaussian sequence model ($p=1$) with heteroscedastic variance and an ellipsoid constraint.
Therefore, from Pinsker's theorem (e.g. Theorem 5.1 in \cite{Johnstone}), a linear estimator attains asymptotic minimaxity.

From the proof of Theorem~\ref{th_pinsker}, the independent Gaussian prior $\theta_i \sim {\rm N}_p (0,\delta^2 V_i)$ with $V_i=\varepsilon^2 a_i^{-1}  \kappa^{-1} Q (I_p-a_i  \kappa Q^{-1})_+$ and $\delta \approx 1$ is considered to be nearly least favorable in that its Bayes risk is comparable to the minimax risk \cite{Johnstone}. 
Intuitively, the probability mass of this prior concentrates around the boundary of $\Theta(\beta,Q)$ as $\varepsilon \to 0$.
We can visualize this least favorable prior like Figure~6.3 of \cite{Johnstone} (see Figure~\ref{sobolev_sample}).

It is an interesting problem whether Theorem~\ref{th_pinsker} can be extended to general $L_Q$ and $\Theta(\beta,R)$ with $Q \neq R$.
To discuss it, let us consider a simpler problem of estimating $\theta \in \mathbb{R}^d$ restricted to $\Theta(R)=\{ \theta \mid \theta^{\top} R^{-1} \theta = 1 \}$ from the observation $X \sim {\rm N}_d(\theta,I_d)$ under the $Q$-quadratic loss $L_Q(\theta,\hat{\theta})=(\hat{\theta}-\theta)^{\top} Q (\hat{\theta}-\theta)$, where $Q$ and $R$ are $d \times d$ positive definite matrices.
This problem has been well studied in the case of $Q=I_d$ and $R=r^2 I_d$, which is closely related to Pinsker's theorem \cite{Beran}. 
In this case, $\Theta(R)$ is the sphere of radius $r$ defined by $S^{d-1}(r)=\{ \theta \mid \| \theta \| = r \}$ and the Bayes estimator with respect to the uniform prior on $S^{d-1}(r)$ is exactly minimax.
Also, the Bayes estimator with respect to the Gaussian prior ${\rm N}_d(0,(r^2/d) I_d)$, which is the linear estimator $\hat{\theta}(x)=r^2/(d+r^2) \cdot x$, attains asymptotic minimaxity as $d \to \infty$ \cite{Beran}.
Intuitively, it is understood as the Gaussian prior being closer to the uniform prior on $S^{d-1}(r)$ as $d \to \infty$ by the concentraion of measure.
An important point here is that the risk functions of both estimators are constant on $S^{d-1}(r)$ due to the orthogonal invariance\footnote{If the parameter space is set to the ball $B^{d-1}(r)=\{ \theta \mid \| \theta \| \leq r \}$, then the risk functions of both estimators are maximized on $S^{d-1}(r)$: $\argmax_{\theta \in B^{d-1}(r)} R(\theta,\hat{\theta}) = S^{d-1}(r)$.}.
Note that a Bayes estimator is (exactly) minimax if and only if all maximizers of its risk function belong to the support of the prior in the current setting \cite{Lehmann}.
However, for general $Q$ and $R$, the risk functions of both estimators are not constant on $S^{d-1}(r)$.
Thus, it is not clear whether a linear estimator attains asymptotic minimaxity in such general cases\footnote{In fact, this problem was mentioned in Exercise 5.3 of \cite{Johnstone} as ``The situation appears to be less simple if A and $\Sigma$ do not commute."}.
Investigation of this problem would clarify whether Theorem~\ref{th_pinsker} can be extended to general $Q$ and $R$.
We leave it for future work.

\section{Adaptive minimaxity of blockwise Efron--Morris estimator}\label{sec_bem}
\subsection{Monotone and blockwise constant oracles}
For the multivariate Gaussian sequence model \eqref{mGS}, the linear estimator with coefficient matrices $C=(C_i)$ where $C_i \in \mathbb{R}^{p \times p}$ is defined by
\begin{align*}
	\hat{\theta}_C=(\hat{\theta}_{C,i}), \quad \hat{\theta}_{C,i} = C_i y_i.
\end{align*}
Here, we consider two classes of coefficient matrices that depend on the choice of the quadratic loss $L_Q$.
Let $Q = U^{\top} \Lambda U$ be a spectral decomposition of $Q$ and $N$ be a positive integer, which will be set to $\lfloor \varepsilon^{-2} \rfloor$ in the next subsection.

The first one is the class of monotone coefficient matrices commutative with $Q$:
\begin{align*}
	\mathbb{W}_{\mathrm{mon},Q} := \{ C=(C_i) \mid C_i &= U^{\top} \Lambda_i U, \ \Lambda_i: \mathrm{diagonal}, \ i=1,\dots,N; \\ 
	& I_p \succeq \Lambda_1 \succeq \Lambda_2 \succeq \dots \succeq \Lambda_{N} \succeq O; \ C_i=O, \ i > N \}.
\end{align*}
The second one is the class of blockwise constant coefficient matrices  commutative with $Q$:
\begin{align*}
	\mathbb{W}_{B,Q} = \{ C=({C}_i) \mid {C}_i &={C}_{(j)}, \ i \in B_j; \ C_i=O, \ i > N; \\
	& C_{(j)} = U^{\top} \Lambda_{(j)} U, \ \Lambda_{(j)}: \mathrm{diagonal}, \ O \preceq \Lambda_{(j)} \preceq I_p \},
\end{align*}
where $B=(B_1,\dots,B_J)$ is a partition of $\{ 1,2,\dots,N\}$ that satisfies $\max B_{j-1} < \min B_j$ for $j=2,\dots,J$, similar to the weakly geometric blocks introduced in Section~\ref{sec_bjs}.

For a fixed $\theta$, we can consider two oracle estimators that attain the smallest risk among $\mathbb{W}_{\mathrm{mon},Q}$ and $\mathbb{W}_{B,Q}$, respectively.
The risks of these oracle estimators are related as follows.

\begin{lemma}\label{lem_block}
	If ${|B_{j+1}|} \leq (1+\eta) |B_j|$ for $j=1,\dots,J-1$ with $\eta>0$, then
	\begin{align}
		\min_{\bar{C} \in \mathbb{W}_{B,Q}} R_Q(\theta,\hat{\theta}_{\bar{C}}) \leq (1+\eta) \min_{C \in \mathbb{W}_{\mathrm{mon},Q}} R_Q(\theta,\hat{\theta}_C) + \varepsilon^2 |B_1| {\rm tr} (Q). \label{mon_block} 
	\end{align}
\end{lemma}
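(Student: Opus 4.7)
The plan is to reduce Lemma~\ref{lem_block} to the classical scalar oracle-coupling result (the one underlying the blockwise James--Stein estimator of Section~\ref{sec_bjs}) by diagonalizing $Q$, and then to reassemble the $p$ one-dimensional bounds with the spectral weights of $Q$.

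First, I would use the spectral decomposition $Q=U^{\top}\Lambda U$ with $\Lambda=\mathrm{diag}(\lambda_1,\dots,\lambda_p)$. By definition of $\mathbb{W}_{\mathrm{mon},Q}$, every admissible $C_i$ has the form $C_i=U^{\top}\Lambda_i U$ with diagonal $\Lambda_i=\mathrm{diag}(\lambda_{i,1},\dots,\lambda_{i,p})$, and the L\"owner chain $I_p\succeq\Lambda_1\succeq\cdots\succeq\Lambda_{N}\succeq O$ is equivalent, entrywise, to $1\ge\lambda_{1,k}\ge\cdots\ge\lambda_{N,k}\ge 0$ for each $k=1,\dots,p$. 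After the orthogonal change of variable $\tilde\theta_i=U\theta_i$ (with $U\xi_i\sim{\rm N}_p(0,I_p)$), a direct calculation yields
\begin{equation*}
R_Q(\theta,\hat\theta_C)=\sum_{k=1}^{p}\lambda_k\sum_{i=1}^{\infty}\bigl[(1-\lambda_{i,k})^2\tilde\theta_{i,k}^{2}+\varepsilon^2\lambda_{i,k}^{2}\bigr].
\end{equation*}
Both the objective and the constraints defining $\mathbb{W}_{\mathrm{mon},Q}$ decouple across $k$, and the same holds for the additional blockwise-constant constraints defining $\mathbb{W}_{B,Q}$. Hence minimizing over $\mathbb{W}_{\mathrm{mon},Q}$ (resp.\ $\mathbb{W}_{B,Q}$) factorizes into $p$ independent scalar monotone (resp.\ blockwise-constant) oracle problems with noise level $\varepsilon$, signal $\tilde\theta_{\cdot,k}$, and aggregate weight $\lambda_k$; write $r_k^{\mathrm{mon}}$, $r_k^{\mathrm{block}}$ for the corresponding minimum values.

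Second, I would apply the scalar coupling coordinate by coordinate. Let $(\lambda^*_{i,k})_i$ attain $r_k^{\mathrm{mon}}$ and define the blockwise-constant candidate $\bar\lambda_{i,k}=\lambda^*_{\min B_j,k}$ for $i\in B_j$, i.e.\ the largest value of $\lambda^*_{\cdot,k}$ on $B_j$. By monotonicity $\bar\lambda_{i,k}\ge\lambda^*_{i,k}$ and both lie in $[0,1]$, so the bias does not increase: $(1-\bar\lambda_{i,k})^2\le(1-\lambda^*_{i,k})^2$. For the variance, on $B_j$ with $j\ge 2$, monotonicity and $|B_j|\le(1+\eta)|B_{j-1}|$ give
\begin{equation*}
\varepsilon^2|B_j|\bar\lambda_{i,k}^{2}=\varepsilon^2|B_j|(\lambda^*_{\min B_j,k})^2\le\varepsilon^2|B_j|(\lambda^*_{\max B_{j-1},k})^2\le\varepsilon^2(1+\eta)\sum_{i\in B_{j-1}}(\lambda^*_{i,k})^2,
\end{equation*}
while on $B_1$ the crude bound $\varepsilon^2|B_1|\bar\lambda_{i,k}^{2}\le\varepsilon^2|B_1|$ suffices. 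Summing over $j$ produces the scalar bound $r_k^{\mathrm{block}}\le(1+\eta)\,r_k^{\mathrm{mon}}+\varepsilon^2|B_1|$.

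Finally, multiplying this scalar inequality by $\lambda_k$, summing over $k=1,\dots,p$, and using $\sum_k\lambda_k=\mathrm{tr}(Q)$ together with the separability of step one gives exactly \eqref{mon_block}. The only step that requires care is the decoupling in step one: once the simultaneous diagonalizability of $C_i$ and $Q$ (built into the definitions of $\mathbb{W}_{\mathrm{mon},Q}$ and $\mathbb{W}_{B,Q}$) is used to reduce matrix L\"owner orderings to coordinatewise scalar orderings, the rest is the textbook scalar oracle argument applied coordinate by coordinate, and I do not anticipate any further obstacle.
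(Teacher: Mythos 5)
Your proof is correct and follows essentially the same route as the paper: you construct the blockwise candidate by copying the monotone oracle's coefficient at the head index $\min B_j$ of each block, observe that the bias cannot increase, control the variance on block $j$ by that on block $j-1$ using $|B_j|\le(1+\eta)|B_{j-1}|$ together with monotonicity, and absorb block $B_1$ with the crude bound $\bar\lambda_{i,k}\le 1$, which is exactly the paper's argument. The only difference is presentational: you diagonalize $Q$ and run the standard scalar oracle comparison coordinate by coordinate, while the paper performs the same steps directly in the L\"owner order, and the two are equivalent here because every coefficient matrix in $\mathbb{W}_{\mathrm{mon},Q}$ and $\mathbb{W}_{B,Q}$ commutes with $Q$.
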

\begin{proof}
	For $C=(C_i) \in \mathbb{W}_{\mathrm{mon},Q}$, define $\bar{C}=(\bar{C}_1,\bar{C}_2,\dots) \in \mathbb{W}_{B,Q}$ by
	\begin{align*}
		\bar{C}_i := \begin{cases} \bar{C}_{(j)}, & i \in B_j, \\ O, & i > N, \end{cases}
	\end{align*}
	where $\bar{C}_{(j)}=C_{\min B_j}$.
	For convenience, we put $\Lambda_{(j)} = \Lambda_{\min B_j}$ so that $\bar{C}_{(j)}= U^{\top} \Lambda_{(j)} U$.
	Then, 
	\begin{align*}
		(I_p-\bar{C}_i) Q (I_p-\bar{C}_i) &= U^{\top} (I_p-{\Lambda}_{(j)}) \Lambda (I_p-{\Lambda}_{(j)}) U \\
		& \preceq U^{\top} (I_p-{\Lambda}_i) \Lambda (I_p-{\Lambda}_i) U \\
		&= (I_p-{C}_i) Q (I_p-{C}_i) 
	\end{align*}
	for $i \in B_j$, since $\Lambda,\Lambda_i,\Lambda_{(j)}$ are all diagonal and $O \preceq \Lambda_i \preceq \Lambda_{(j)} \preceq I_p$.
	Therefore,
	\begin{align*}
		R_Q(\theta,\hat{\theta}_{\bar{C}}) &= \sum_{i=1}^{\infty} ( \theta_i^{\top} (I_p-\bar{C}_i) Q (I_p-\bar{C}_i) \theta_i + \varepsilon^2 {\rm tr} (\bar{C}_i Q \bar{C}_i) ) \\
		& \leq \sum_{i=1}^{\infty} \theta_i^{\top} (I_p-C_i) Q (I_p-C_i) \theta_i + \varepsilon^2 {\rm tr} \left( \sum_{i=1}^{\infty} \bar{C}_i Q \bar{C}_i \right).
	\end{align*}
	By definition of $\bar{C}$,
	\begin{align*}
		\sum_{i=1}^{\infty} \bar{C}_i Q \bar{C}_i &= \sum_{j=1}^{J} |B_j| \bar{C}_{(j)} Q \bar{C}_{(j)} \preceq  |B_1| Q + \sum_{j=2}^J |B_j| \bar{C}_{(j)} Q \bar{C}_{(j)},
	\end{align*}
	where we used $\bar{C}_{(1)} Q \bar{C}_{(1)}=U^{\top} \Lambda_{(1)}^2 \Lambda U \preceq U^{\top} \Lambda U=Q$.
	From 
	\begin{align*}
		\bar{C}_{(j)} Q \bar{C}_{(j)} = U^{\top} \Lambda_{(j)}^2 \Lambda U \preceq U^{\top} \Lambda_{(j-1)}^2 \Lambda U = \bar{C}_{(j-1)} Q \bar{C}_{(j-1)}
	\end{align*}
	and $|B_j| \leq (1+\eta) |B_{j-1}|$ for $j \geq 2$,
	\begin{align*}
		\sum_{j=2}^J |B_j| \bar{C}_{(j)} Q \bar{C}_{(j)} &\preceq (1+\eta) \sum_{j=2}^J |B_{j-1}| \bar{C}_{(j-1)} Q \bar{C}_{(j-1)} \\
		& \preceq (1+\eta) \sum_{j=1}^{J-1} \sum_{i \in B_j} {C}_{i} Q C_i  \preceq (1+\eta) \sum_{i=1}^{\infty} {C}_{i} Q C_i.
	\end{align*}
	Therefore,
	\begin{align*}
		R_Q(\theta,\hat{\theta}_{\bar{C}}) &\leq \sum_{i=1}^{\infty} \theta_i^{\top} (I_p-C_i) Q (I_p-C_i) \theta_i + \varepsilon^2 {\rm tr} \left(|B_1| Q + (1+\eta) \sum_{i=1}^{\infty} {C}_{i} Q {C}_{i} \right) \\
		&\leq (1+\eta) \sum_{i=1}^{\infty} ( \theta_i^{\top} (I_p-C_i) Q (I_p-C_i) \theta_i + \varepsilon^2 {\rm tr} ({C}_i Q {C}_i) ) + \varepsilon^2 |B_1| {\rm tr} (Q) \\
		&= (1+\eta) R_Q(\theta,\hat{\theta}_{{C}}) + \varepsilon^2 |B_1| {\rm tr} (Q).
	\end{align*}
	Since $C  \in \mathbb{W}_{\mathrm{mon},Q}$ is arbitrary, \eqref{mon_block} is obtained.
\end{proof}

\subsection{Blockwise Efron--Morris estimator}
Let $N=\lfloor \varepsilon^{-2} \rfloor$ and $B=(B_1,\dots,B_J)$ be a partition of $\{ 1,\dots,N \}$.
For the multivariate Gaussian sequence model \eqref{mGS}, we define the blockwise Efron--Morris estimator $\hat{\theta}_{\mathrm{BEM}}=(\hat{\theta}_{\mathrm{BEM},i})$ by
\begin{align}
	\hat{\theta}_{\mathrm{BEM},i} = \begin{cases} y_i, & i \in B_j, \ |B_j| < p+2, \\ ( I_p-\varepsilon^2(|B_j|-p-1) (\sum_{i \in B_j} y_i y_i^{\top})^{-1} ) y_i, & i \in B_j, \ |B_j| \geq p+2, \\ 0, & i > N. \end{cases} \label{BEM}
\end{align}
This estimator applies the Efron--Morris type singular value shrinkage to each block $B_j$.
For the weakly geometric blocks introduced in Section~\ref{sec_bjs}, the blockwise Efron--Morris estimator attains the adaptive minimaxity over the multivariate Sobolev ellipsoids as follows.

\begin{theorem}\label{th_bem}
	For every $\beta$ and $Q$, the blockwise Efron--Morris estimator $\hat{\theta}_{\rm BEM}$ with the weakly geometric blocks is asymptotically minimax under $L_Q$ on the multivariate Sobolev ellipsoid $\Theta(\beta,Q)$:
	\begin{align*}
		\sup_{\theta \in \Theta(\beta,Q)} R_Q(\theta,\hat{\theta}_{\rm BEM}) \sim \inf_{\hat{\theta}} \sup_{\theta \in \Theta(\beta,Q)} R_Q(\theta,\hat{\theta}) \sim P(\beta,Q) \varepsilon^{{4 \beta}/{(2 \beta+1)}}
	\end{align*}
	as $\varepsilon \to 0$, where $\inf$ is taken over all estimators and  $P(\beta,Q)$ is given by \eqref{Pdef}.
\end{theorem}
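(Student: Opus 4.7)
The plan is to bound $R_Q(\theta,\hat\theta_{\mathrm{BEM}})$ from above by $P(\beta,Q)\varepsilon^{4\beta/(2\beta+1)}(1+o(1))$ uniformly over $\Theta(\beta,Q)$, after which the matching lower bound is immediate from Theorem~\ref{th_pinsker}. The chain of comparisons I would carry out is: BEM risk is at most the oracle among blockwise-constant matrix linear estimators (via Corollary~\ref{cor_oracle}), which is at most the oracle over $\mathbb{W}_{B,Q}$ (a subclass), which is at most $(1+\rho_\varepsilon)$ times the oracle over $\mathbb{W}_{\mathrm{mon},Q}$ plus a small remainder (by Lemma~\ref{lem_block}), which is at most $(1+\rho_\varepsilon)\,R_Q(\theta,\hat\theta_P)$ whose worst case on $\Theta(\beta,Q)$ is asymptotically $P(\beta,Q)\varepsilon^{4\beta/(2\beta+1)}$ by Theorem~\ref{th_pinsker}.

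First, for each block $B_j$ with $|B_j|\ge p+2$ I would stack $\{y_i\}_{i\in B_j}$ into an $|B_j|\times p$ matrix, rescale by $\varepsilon$ to unit-variance noise so that Corollary~\ref{cor_oracle} applies directly, then rescale back to obtain
\begin{align*}
{\rm E}_\theta\!\left[\sum_{i\in B_j}(\hat\theta_{\mathrm{BEM},i}-\theta_i)^{\top} Q(\hat\theta_{\mathrm{BEM},i}-\theta_i)\right] \le \min_{C_j\in\mathbb{R}^{p\times p}} {\rm E}_\theta\!\left[\sum_{i\in B_j}(C_j y_i - \theta_i)^{\top} Q(C_j y_i - \theta_i)\right] + 2(p+1)\varepsilon^2 {\rm tr}(Q),
\end{align*}
the crucial $\varepsilon^2$ prefactor coming from the rescaling. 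For small blocks with $|B_j|<p+2$ the BEM is the identity, contributing $\varepsilon^2 |B_j|{\rm tr}(Q)$. Summing over $j=1,\dots,J$, adding the tail $\sum_{i>N}\theta_i^{\top} Q\theta_i\le\lambda_1^2 a_{\beta,N+1}^{-2}=O(\varepsilon^{4\beta})$ which is valid on $\Theta(\beta,Q)$, and noting that the unrestricted blockwise-constant minimum is at most the minimum over $\mathbb{W}_{B,Q}$, I arrive at
\begin{align*}
R_Q(\theta,\hat\theta_{\mathrm{BEM}})\le \min_{\bar C\in\mathbb{W}_{B,Q}} R_Q(\theta,\hat\theta_{\bar C}) + 2(p+1)J\varepsilon^2{\rm tr}(Q) + O(\varepsilon^2\log(1/\varepsilon)) + O(\varepsilon^{4\beta}).
\end{align*}

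Next, applying Lemma~\ref{lem_block} with $\eta=\rho_\varepsilon$ (admissible since the weakly geometric blocks grow by factor $1+\rho_\varepsilon$) replaces the $\mathbb{W}_{B,Q}$-oracle by $(1+\rho_\varepsilon)$ times the $\mathbb{W}_{\mathrm{mon},Q}$-oracle, up to $\varepsilon^2|B_1|{\rm tr}(Q)$. The Pinsker coefficients $C_{\mathrm{P},i}=(I_p-a_{\beta,i}\kappa Q^{-1})_+$ are non-increasing in $i$ (because $a_{\beta,i}$ is non-decreasing and the positive part preserves the Löwner order for matrices commuting with $Q$), commute with $Q$, satisfy $O\preceq C_{\mathrm{P},i}\preceq I_p$, and vanish for $i>N_1(\kappa)=O(\varepsilon^{-2/(2\beta+1)})=o(N)$; hence $(C_{\mathrm{P},i})_{i\le N}\in\mathbb{W}_{\mathrm{mon},Q}$ for small enough $\varepsilon$, so $\min_{\mathbb{W}_{\mathrm{mon},Q}} R_Q(\theta,\cdot)\le R_Q(\theta,\hat\theta_{\mathrm{P}})\le \sup_{\theta\in\Theta(\beta,Q)} R_Q(\theta,\hat\theta_{\mathrm{P}})\sim P(\beta,Q)\varepsilon^{4\beta/(2\beta+1)}$ by Theorem~\ref{th_pinsker}.

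The remaining step is to check that every residual term is $o(\varepsilon^{4\beta/(2\beta+1)})$. For the weakly geometric blocks one verifies $J=O(\log^2(1/\varepsilon))$, $|B_1|=O(\log(1/\varepsilon))$, and that the total length of the small blocks is $O(\log(1/\varepsilon))$; since $4\beta/(2\beta+1)<2$ for all $\beta>0$, each of $\varepsilon^2 J$, $\varepsilon^2|B_1|$, the small-block contribution, and the $O(\varepsilon^{4\beta})$ tail is $o(\varepsilon^{4\beta/(2\beta+1)})$, and the multiplicative factor $\rho_\varepsilon$ vanishes. Combining everything yields the sharp upper bound, matching the lower bound of Theorem~\ref{th_pinsker}. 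The main obstacle is precisely this bookkeeping: one must track that the per-block oracle cost $2(p+1){\rm tr}(Q)$ from Corollary~\ref{cor_oracle} picks up the $\varepsilon^2$ prefactor under the noise rescaling, for otherwise $J\cdot 2(p+1){\rm tr}(Q)=\Theta(\log^2(1/\varepsilon))$ would dominate and destroy asymptotic minimaxity. This mirrors the corresponding scalar argument for the blockwise James--Stein estimator.
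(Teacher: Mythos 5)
Your proposal is correct and follows essentially the same route as the paper: per-block application of Corollary~\ref{cor_oracle} with the $\varepsilon^2$ rescaling, passage from the blockwise-constant oracle over $\mathbb{W}_{B,Q}$ to the monotone oracle via Lemma~\ref{lem_block}, insertion of the Pinsker coefficients $C_{\mathrm{P}} \in \mathbb{W}_{\mathrm{mon},Q}$ (using $N_1(\kappa)=o(\varepsilon^{-2})$), and Theorem~\ref{th_pinsker} for both the worst-case Pinsker risk and the matching lower bound, with all remainders $O(\varepsilon^2\log^2(1/\varepsilon))=o(\varepsilon^{4\beta/(2\beta+1)})$. The only cosmetic differences are that the paper uses the ratio bound $|B_{j+1}|/|B_j|\le 1+3\rho_\varepsilon$ from Tsybakov's Lemma~3.12 rather than your $1+\rho_\varepsilon$, and folds the tail $\sum_{i>N}\theta_i^{\top}Q\theta_i$ and the small blocks into the blockwise oracle inequality instead of bounding them separately; neither affects the argument.
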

\begin{proof}
From the definition of the weakly geometric blocks and Lemma~3.12 of \cite{Tsybakov}, 
	\begin{align}\label{wgblock}
		|B_1| = \log^2 (1/\varepsilon), \quad J \leq C \log^2 (1/\varepsilon), \quad \max_{1 \leq j \leq J-1} \frac{|B_{j+1}|}{|B_j|} = 1+3 \rho_{\varepsilon},
	\end{align}
	for sufficiently small $\varepsilon$, where $C>0$ is some constant and $\rho_{\varepsilon}=(\log (1/\varepsilon))^{-1}$.
	We take $\varepsilon$ to be sufficiently small so that $|B_j|>p+1$ for every $j$.
	
	For each block $B_j$, the oracle inequality in Corollary~\ref{cor_oracle} yields
	\begin{align*}
		&\sum_{i \in B_j} {\rm E}_{\theta} ((\hat{\theta}_{{\rm BEM}})_i - \theta_i)^{\top} Q ((\hat{\theta}_{{\rm BEM}})_i - \theta_i) \leq \min_{C} \sum_{i \in B_j} {\rm E}_{\theta} ((\hat{\theta}_{C})_i - \theta_i)^{\top} Q ((\hat{\theta}_C)_i - \theta_i) + 2(p+1) {\rm tr} (Q) \varepsilon^2.
	\end{align*}
	Thus,
\begin{align}\label{oracle_ineq}
&R_Q(\theta,\hat{\theta}_{\rm BEM}) = \sum_{j=1}^J \sum_{i \in B_j} {\rm E}_{\theta} ((\hat{\theta}_{{\rm BEM}})_i - \theta_i)^{\top} Q ((\hat{\theta}_{{\rm BEM}})_i - \theta_i) + \sum_{i=N+1}^{\infty} \theta_i^{\top} Q \theta_i \\
		\leq& \sum_{j=1}^J \left( \min_{C} \sum_{i \in B_j} {\rm E}_{\theta} ((\hat{\theta}_{C})_i - \theta_i)^{\top} Q ((\hat{\theta}_C)_i - \theta_i) + 2(p+1) {\rm tr} (Q) \varepsilon^2 \right) + \sum_{i=N+1}^{\infty} \theta_i^{\top} Q \theta_i \nonumber \\
		=& \min_{\bar{C} \in \mathbb{W}_{B,Q}} R_Q(\theta,\hat{\theta}_{\bar{C}}) + 2(p+1) J {\rm tr} (Q) \varepsilon^2. \nonumber
	\end{align}
	
	Also, from Lemma~\ref{lem_block},
	\begin{align}\label{oracle_ineq2}
		\min_{\bar{C} \in \mathbb{W}_{B,Q}} R_Q(\theta,\hat{\theta}_{\bar{C}}) \leq (1+3\rho_{\varepsilon}) \min_{{C} \in \mathbb{W}_{\mathrm{mon},Q}} R_Q(\theta,\hat{\theta}_C) + |B_1| {\rm tr} (Q) \varepsilon^2.
	\end{align}
	
	Finally, consider the linear estimator $\hat{\theta}_{\mathrm{P}}$ in \eqref{mPinsker}, which is asymptotically minimax on $\Theta(\beta,Q)$. 
	By definition, the coefficient matrix $C_{\mathrm{P},i}$ of $\hat{\theta}_{\mathrm{P}}$ is zero for $i > N_1(\kappa) = \lfloor (\kappa^{-1} \lambda_1)^{1/\beta} \rfloor$.
	From Lemma~\ref{lem_kappa2}, $N_1(\kappa) = O(\varepsilon^{-2/(2 \beta + 1)})=o(\varepsilon^{-2})$ as $\varepsilon \to 0$.  
	Therefore, for sufficiently small $\varepsilon$, $C_{\mathrm{P}} \in \mathbb{W}_{\mathrm{mon},Q}$ and
	\begin{align}\label{oracle_ineq3}
		\min_{{C} \in \mathbb{W}_{\mathrm{mon},Q}} R_Q(\theta,\hat{\theta}_C) \leq R_Q(\theta,\hat{\theta}_{\mathrm{P}}).
	\end{align}
	
	Combining \eqref{oracle_ineq}, \eqref{oracle_ineq2} and \eqref{oracle_ineq3} and using \eqref{wgblock}, we obtain
	\begin{align*}
		R_Q(\theta,\hat{\theta}_{\rm BEM}) \leq (1+3\rho_{\varepsilon}) R_Q(\theta,\hat{\theta}_{\mathrm{P}}) + C' \varepsilon^2 \log^2(1/\varepsilon)
	\end{align*}
	for sufficiently small $\varepsilon$, where $C'$ is a constant independent of $\theta$.
	Therefore,
	\begin{align*}
		\sup_{\theta \in \Theta(\beta,Q)} R_Q(\theta,\hat{\theta}_{\rm BEM}) & \leq (1+3\rho_{\varepsilon}) \sup_{\theta \in \Theta(\beta,Q)} R_Q(\theta,\hat{\theta}_{\mathrm{P}}) + C' \varepsilon^2 \log^2(1/\varepsilon) \sim \inf_{\hat{\theta}} \sup_{\theta \in \Theta(\beta,Q)} R_Q(\theta,\hat{\theta})
\end{align*}
	as $\varepsilon \to 0$, where we used Theorem~\ref{th_pinsker}.
	Hence, $\hat{\theta}_{\rm BEM}$ is asymptotically minimax on $\Theta(\beta,Q)$.
\end{proof}

Theorem~\ref{th_bem} indicates that the blockwise Efron--Morris estimator is adaptive not only to smoothness of the unknown function but also to arbitrary quadratic loss.
In other words, it attains sharp adaptive estimation of any linear combinations of the mean sequences simultaneously.
Note that using the blockwise James--Stein estimator for each $j$ only attains adaptive minimaxity for diagonal $Q$.

Theorem~\ref{th_bem} considers the quadratic loss $L_Q$ and parameter space $\Theta(\beta,R)$ with $Q=R$.
Whereas it would be useful if the result can be extended to general $Q$ and $R$, we believe that the current result is an important special case.
Since $L_{aQ}=a L_Q$, Theorem~\ref{th_bem} guarantees that the blockwise Efron--Morris estimator attains asymptotic minimaxity under $L_Q$ on every $\Theta(\beta,aQ)$ simultaneously, in the same way as the blockwise James--Stein estimator attains asymptotic minimaxity under the usual quadratic loss on every Sobolev ellipsoid \cite{Johnstone}.
As discussed after Theorem V.1, simultaneous diagonalizability of $Q$ and $R$ may be essential for the existence of an asymptotically minimax linear estimator on $\Theta(\beta,R)$ under $L_Q$.
We leave extension of Theorem VI.1 to general $Q$ and $R$ for future work.

\section{Simulation results}\label{sec_simulation}
In this section, we present simulation results to compare the performance of several estimators.
We approximated the $Q$-quadratic loss by truncation
\begin{align}
	L_Q (\theta,\hat{\theta}) \approx \sum_{i=1}^n (\hat{\theta}_i-\theta)^{\top} Q (\hat{\theta}_i-\theta) \label{trunc}
\end{align}
for $n$ large enough (see Figure~\ref{risk_n}) and computed the $Q$-quadratic risk by Monte Carlo with 100 repetitions.
We compare three estimators: the multivariate Pinsker estimator $\hat{\theta}_{\mathrm{P}}$ in \eqref{mPinsker}, the blockwise Efron--Morris estimator $\hat{\theta}_{\mathrm{BEM}}$ in \eqref{BEM} and the blockwise James--Stein estimator $\hat{\theta}_{\mathrm{BJS}}$ in \eqref{BJS} applied to each of the $p$ variables individually.
In all experiments, we confirmed that the standard errors of the Monte Carlo estimates are less than 1\%.

Figure~\ref{risk_n} plots the ratio of the $Q$-quadratic risk to the minimax risk $P(\beta,Q) \varepsilon^{4 \beta /(2 \beta+1)}$ (see Theorem~\ref{th_pinsker}) as a function of the truncation point $n$ in \eqref{trunc}, where $p=5$, $\beta=1$, $\varepsilon=10^{-6}$.
The matrix $Q$ is set to $Q=I_p$ (full rank) or $Q=u u^{\top}+10^{-3} I_p$ with $u \sim {\rm N}_p(0,I_p)$ (approximately rank one).
The parameter $\theta$ is fixed to a point on the boundary of $\Theta(\beta,Q)$ that is obtained by using a sample from the least favorable prior $\xi_i \sim {\rm N}_p (0,V_i)$ with $V_i=\varepsilon^2 a_i^{-1}  \kappa^{-1} Q (I_p-a_i  \kappa Q^{-1})_+$ (see the remark after Theorem~\ref{th_pinsker}) as 
\begin{align*}
	\theta = \left( \sum_{i=1}^n a_i^2 \xi_i^{\top} Q^{-1} \xi_i \right)^{-1/2} \xi.
\end{align*}
The figure indicates that $n \approx 10^4$ is enough for the approximation \eqref{trunc} in this setting.
Since $\theta$ was set close to the least favorable values on $\Theta(\beta,Q)$, the $Q$-quadratic risk of the multivariate Pinsker estimator is almost equal to the minimax risk on $\Theta(\beta,Q)$, which is compatible with Theorem~\ref{th_pinsker}.
Also, the $Q$-quadratic risk of the blockwise Efron--Morris estimator is approximately equal to the minimax risk, which is compatible with Theorem~\ref{th_bem}.
On the other hand, the blockwise Jame--Stein estimator fails to attain the minimax risk when $Q$ is close to low-rank (Figure~\ref{risk_n} right).

\begin{figure}[h]
	\centering
	\begin{tikzpicture}
		\begin{axis}[
			title={$Q=I_p$},
			xlabel={$\log_{10} n$}, xmin=3, xmax=6,
			ylabel={risk ratio}, ymin=0, ymax=1.5,
			width=0.45\linewidth
			]
			\addplot[thick, color=black,
			filter discard warning=false, unbounded coords=discard
			] table {
				3.0103    0.1844
				3.3113    0.3498
				3.6124    0.6279
				3.9134    0.9586
				4.2144    1.0231
				4.5154    1.0344
				4.8165    1.0454
				5.1175    1.0566
				5.4185    1.0667
				5.7196    1.0780
				6.0206    1.0892
			};
			\addplot[dashed, thick, color=black,
			filter discard warning=false, unbounded coords=discard
			] table {
				3.0103    0.1828
				3.3113    0.3467
				3.6124    0.6215
				3.9134    0.9454
				4.2144    0.9987
				4.5154    0.9987
				4.8165    0.9987
				5.1175    0.9987
				5.4185    0.9987
				5.7196    0.9987
				6.0206    0.9987
			};
			\addplot[dotted, thick, color=black,
			filter discard warning=false, unbounded coords=discard
			] table {
				3.0103    0.1834
				3.3113    0.3478
				3.6124    0.6237
				3.9134    0.9498
				4.2144    1.0070
				4.5154    1.0108
				4.8165    1.0144
				5.1175    1.0181
				5.4185    1.0215
				5.7196    1.0253
				6.0206    1.0290
			};
		\end{axis}
	\end{tikzpicture} 
	\begin{tikzpicture}
		\begin{axis}[
			title={$Q=u u^{\top}+10^{-3} I_p$},
			xlabel={$\log_{10} n$}, xmin=3, xmax=6,
			ylabel={risk ratio}, ymin=0, ymax=1.5,
			width=0.45\linewidth
			]
			\addplot[thick, color=black,
			filter discard warning=false, unbounded coords=discard
			] table {
				3.0103    0.1083
				3.3113    0.2104
				3.6124    0.3952
				3.9134    0.6943
				4.2144    0.9941
				4.5154    1.0138
				4.8165    1.0206
				5.1175    1.0275
				5.4185    1.0334
				5.7196    1.0399
				6.0206    1.0460
			};
			\addplot[dashed, thick, color=black,
			filter discard warning=false, unbounded coords=discard
			] table {
				3.0103    0.1076
				3.3113    0.2093
				3.6124    0.3928
				3.9134    0.6894
				4.2144    0.9847
				4.5154    0.9976
				4.8165    0.9976
				5.1175    0.9976
				5.4185    0.9976
				5.7196    0.9976
				6.0206    0.9976
			};
			\addplot[dotted, thick, color=black,
			filter discard warning=false, unbounded coords=discard
			] table {
				3.0103    0.1359
				3.3113    0.2905
				3.6124    0.5811
				3.9134    1.0234
				4.2144    1.3928
				4.5154    1.4085
				4.8165    1.4107
				5.1175    1.4131
				5.4185    1.4151
				5.7196    1.4174
				6.0206    1.4194
			};
		\end{axis}
	\end{tikzpicture} 
	\caption{Ratio of $Q$-quadratic risk to the minimax risk as a function of $n$. dashed: multivariate Pinsker estimator, solid: blockwise Efron--Morris estimator, dotted: blockwise James--Stein estimator}
	\label{risk_n}
\end{figure}
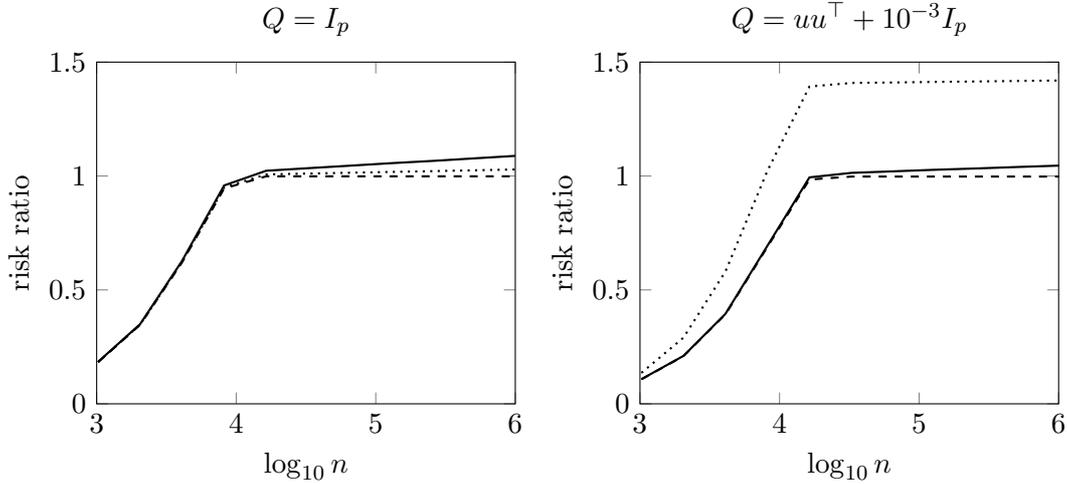

Figure~\ref{minimax_check} plots the ratio of $Q$-quadratic risk to the minimax risk $P(\beta,Q) \varepsilon^{4 \beta /(2 \beta+1)}$ as a function of $\varepsilon$, where $p=5$, $\beta=1$ and $n=10^6$.
The setting of $Q$ and $\theta$ are the same with Figure~\ref{risk_n}.
It shows the asymptotic minimaxity of the multivariate Pinsker estimator and the blockwise Efron--Morris estimator as $\varepsilon \to 0$.

\begin{figure}[h]
	\centering
	\begin{tikzpicture}
		\begin{axis}[
			title={$Q=I_p$},
			xlabel={$\log_{10} \varepsilon$}, xmin=-8, xmax=-4,
			ylabel={risk ratio}, ymin=0, ymax=1.5,
			width=0.45\linewidth
			]
			\addplot[thick, color=black,
			filter discard warning=false, unbounded coords=discard
			] table {
				-8.0000    1.0027
				-7.0000    1.0175
				-6.0000    1.0906
				-5.0000    1.4504
				-4.0000    3.0512
			};
			\addplot[dashed, thick, color=black,
			filter discard warning=false, unbounded coords=discard
			] table {
				-8.0000    1.0000
				-7.0000    1.0005
				-6.0000    0.9994
				-5.0000    0.9992
				-4.0000    0.9978
				-3.0000    1.0056
				-2.0000    1.0169
			};
			\addplot[dotted, thick, color=black,
			filter discard warning=false, unbounded coords=discard
			] table {
				-8.0000    1.0010
				-7.0000    1.0064
				-6.0000    1.0297
				-5.0000    1.1510
				-4.0000    1.6868
				-3.0000    3.9011
				-2.0000   11.2988
			};
		\end{axis}
	\end{tikzpicture} 
	\begin{tikzpicture}
		\begin{axis}[
			title={$Q=u u^{\top}+10^{-3} I_p$},
			xlabel={$\log_{10} \varepsilon$}, xmin=-8, xmax=-4,
			ylabel={risk ratio}, ymin=0, ymax=1.5,
			width=0.45\linewidth
			]
			\addplot[thick, color=black,
			filter discard warning=false, unbounded coords=discard
			] table {
				-8.0000    1.0006
				-7.0000    1.0034
				-6.0000    1.0213
				-5.0000    1.1101
				-4.0000    1.5108
				-3.0000    3.2099
				-2.0000    9.1207
			};
			\addplot[dashed, thick, color=black,
			filter discard warning=false, unbounded coords=discard
			] table {
				-8.0000    0.9999
				-7.0000    0.9997
				-6.0000    1.0003
				-5.0000    1.0001
				-4.0000    1.0019
				-3.0000    0.9958
				-2.0000    0.9634
			};
			\addplot[dotted, thick, color=black,
			filter discard warning=false, unbounded coords=discard
			] table {
				-8.0000    1.4374
				-7.0000    1.4385
				-6.0000    1.4382
				-5.0000    1.4592
				-4.0000    1.5401
				-3.0000    1.9626
				-2.0000    3.7694
			};
		\end{axis}
	\end{tikzpicture} 
	\caption{Ratio of $Q$-quadratic risk to the minimax risk as a function of $\varepsilon$. dashed: multivariate Pinsker estimator, solid: blockwise Efron--Morris estimator, dotted: blockwise James--Stein estimator}
	\label{minimax_check}
\end{figure}
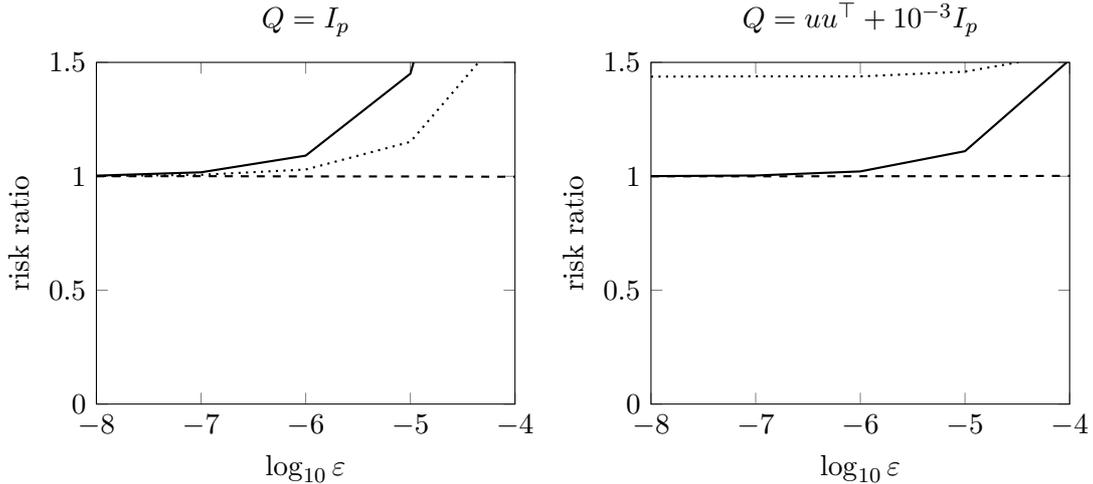

Figure~\ref{denoise} visualizes an example in terms of function denoising by using the Fourier expansion similarly to Figure~\ref{sobolev_sample}, where $p=2$, $\varepsilon=10^{-6}$, $n=10^6$, $\beta=1$  and
\begin{align*}
	Q = \begin{pmatrix} 5 & -2 \\ -2 & 1 \end{pmatrix}.
\end{align*}
The setting of $\theta$ is the same with Figure~\ref{risk_n}.
In this case, the $Q$-quadratic loss of the blockwise Efron--Morris estimator and blockwise James--Stein estimator are $9.64 \times 10^{-8}$ and $1.01 \times 10^{-7}$, respectively, and the minimax risk $P(\beta,Q) \varepsilon^{4 \beta /(2 \beta+1)}$ is $9.53 \times 10^{-8}$.

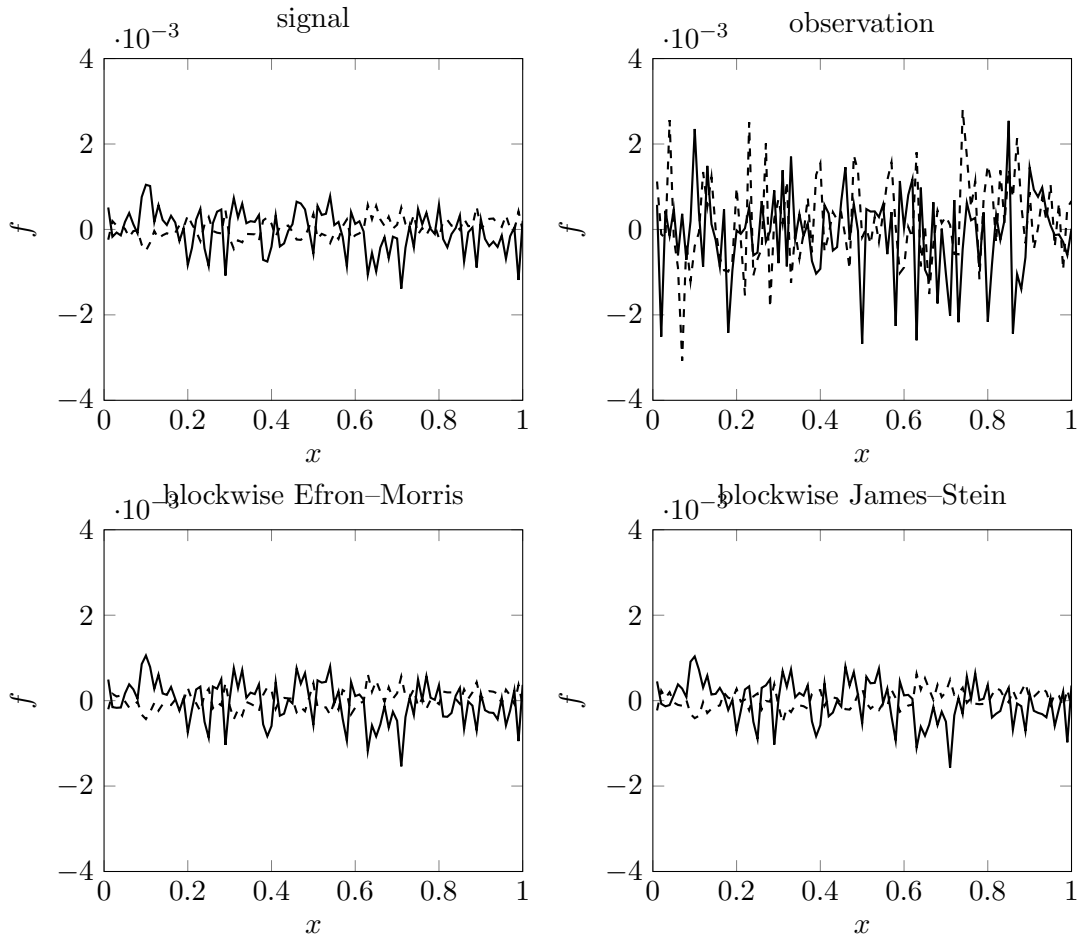
\begin{figure}[h]
	\begin{tikzpicture}
		\begin{axis}[
			title={signal},
			xlabel={$x$}, xmin=0, xmax=1, ylabel={$f$}, ymin=-0.004, ymax=0.004, 
			width=0.45\linewidth
			]
			\addplot[ thick, color=black,
			filter discard warning=false, unbounded coords=discard
			] table [x=x, y=f1] {signal.dat};
			\addplot[dashed,  thick, color=black,
			filter discard warning=false, unbounded coords=discard
			] table [x=x, y=f2] {signal.dat};
		\end{axis}
	\end{tikzpicture} 
	\begin{tikzpicture}
		\begin{axis}[
			title={observation},
			xlabel={$x$}, xmin=0, xmax=1, ylabel={$f$},  ymin=-0.004, ymax=0.004,
			width=0.45\linewidth
			]
			\addplot[ thick, color=black,
			filter discard warning=false, unbounded coords=discard
			] table [x=x, y=f1] {obs.dat};
			\addplot[dashed,  thick, color=black,
			filter discard warning=false, unbounded coords=discard
			] table [x=x, y=f2] {obs.dat};
		\end{axis}
	\end{tikzpicture} \\
	\begin{tikzpicture}
		\begin{axis}[
			title={blockwise Efron--Morris},
			xlabel={$x$}, xmin=0, xmax=1, ylabel={$f$},  ymin=-0.004, ymax=0.004, 
			width=0.45\linewidth
			]
			\addplot[ thick, color=black,
			filter discard warning=false, unbounded coords=discard
			] table [x=x, y=f1] {blockEM.dat};
			\addplot[dashed,  thick, color=black,
			filter discard warning=false, unbounded coords=discard
			] table [x=x, y=f2] {blockEM.dat};
		\end{axis}
	\end{tikzpicture} 
	\begin{tikzpicture}
		\begin{axis}[
			title={blockwise James--Stein},
			xlabel={$x$}, xmin=0, xmax=1, ylabel={$f$},  ymin=-0.004, ymax=0.004,
			width=0.45\linewidth
			]
			\addplot[ thick, color=black,
			filter discard warning=false, unbounded coords=discard
			] table [x=x, y=f1] {blockJS.dat};
			\addplot[dashed,  thick, color=black,
			filter discard warning=false, unbounded coords=discard
			] table [x=x, y=f2] {blockJS.dat};
		\end{axis}
	\end{tikzpicture} 
	\caption{Example of function denoising. solid: $f_1$, dashed: $f_2$}
	\label{denoise}
\end{figure} 

\section{Conclusion}\label{sec_concl}

In this study, we introduced a multivariate version of the Gaussian sequence model and showed that the blockwise Efron--Morris estimator attains adaptive minimaxity over the multivariate Sobolev ellipsoids.
Notably, it adapts to arbitrary quadratic loss in addition to smoothness of the unknown function.
This adaptivity comes from the singular value shrinkage in the Efron--Morris estimator, which is different from the scalar shrinkage in the James--Stein estimator and thresholding-type shrinkage commonly used in sparse estimation.
By shrinking the singular values towards zero separately, the Efron--Morris estimator successfully captures the correlations between multiple sequences and utilize them for adaptive estimation.
We expect that the singular value shrinkage is useful in other estimation problems as well.

In addition to adaptive estimation over the Sobolev ellipsoids, many results have been obtained for estimation in the Gaussian sequence model, which corresponds to $p=1$ in our setting \cite{Johnstone}.
It is an interesting future work to further investigate generalization of these results to multivariate Gausian sequence models.
For example, since a penalized version of the blockwise James--Stein estimator attains adaptive minimaxity over general ellipsoids and hyperrectangles \cite{Cavalier01}, such a modification may be possible for the blockwise Efron--Morris estimator as well.
Generalization to the linear inverse problem \cite{Cavalier02}, which corresponds to the Gaussian sequence model where the observation variance increases with indices, is another direction for future research.
Also, it is an interesting problem whether existing methods for adaptation to inhomogenous smoothness in the Besov spaces, such as wavelet thresholding \cite{Donoho94,Donoho95} and deep neural networks \cite{Suzuki}, can be extended to attain adaptation to arbitrary quadratic loss.
There are several extensions of the Efron--Morris estimator \citep{Tsukuma} and they may be also useful.

Empirical Bayes methods have been found to be effective in adaptive estimation \cite{Johnstone04,Zhang}.
The blockwise Efron--Morris estimator provides another example of this, since the Efron--Morris estimator is naturally interpreted as an empirical Bayes estimator \cite{Efron72}.
Recently, a superharmonic prior for the normal mean matrix parameter has been developed \cite{Matsuda}, which shrinks the singular values towards zero and can be viewed as a generalization of Stein's prior \cite{Stein74}.
The generalized Bayes estimator with respect to this prior is minimax under the matrix quadratic loss and has qualitatively the same behavior with the Efron--Morris estimator \cite{Matsuda22}.
It is an interesting future work to investigate Bayesian inference in multivariate Gaussian sequence models with respect to such singular value shrinkage priors, including uncertainty quantification.

\begin{appendix}
\subsection{Proof of Proposition~\ref{prop_sobolev}}
Let $f \in W (\beta,L)$.
Since $\phi_1' \equiv 0$, $\phi_{2m}'=-(2 \pi m) \phi_{2m+1}$ and $\phi_{2m+1}'=(2 \pi m) \phi_{2m}$ for $m \geq 1$,
\begin{align}\label{fourier_deriv}
	f_j^{(l)} (x) = \sum_{m=1}^{\infty} (2 \pi m)^{l} (s_{2m,j}(l) \phi_{2m}(x) + s_{2m+1,j}(l) \phi_{2m+1}(x)), \end{align}
for $l=0,1,2,\dots$, where 
\begin{align*}
	(s_{2m,j}(l),s_{2m+1,j}(l))= \begin{cases}
		(\theta_{2m,j}, \theta_{2m+1,j}), & l \equiv 0 \mod 4, \\ 
		(\theta_{2m+1,j}, -\theta_{2m,j}), & l \equiv 1 \mod 4, \\ 
		(-\theta_{2m,j}, -\theta_{2m+1,j}), & l \equiv 2 \mod 4, \\ 
		(-\theta_{2m+1,j}, \theta_{2m+1,j}), & l \equiv 3 \mod 4,
	\end{cases}
\end{align*}
for $m=1,2,\dots$.
Thus, from the orthonormality of $\phi=(\phi_i)$,
\begin{align*}
	\int_0^1 f_j^{(\beta)} (x) f_k^{(\beta)} (x) {\rm d} x =  \sum_{m=1}^{\infty} (2 \pi m)^{2 \beta} (s_{2m,j} s_{2m,k} + s_{2m+1,j} s_{2m+1,k})= \pi^{2 \beta} \sum_{i=1}^{\infty} a_{\beta,i}^2 \theta_{ij} \theta_{ik}.
\end{align*}
Therefore,
\begin{align*}
	1 \geq \int_0^1 f^{(\beta)} (x)^{\top} L^{-2} f^{(\beta)} (x) {\rm d} x  &= \sum_{j=1}^p \sum_{k=1}^p (L^{-2})_{jk} \int_0^1 f_j^{(\beta)} (x) f_k^{(\beta)} (x) {\rm d} x \\
	&= \pi^{2 \beta} \sum_{j=1}^p \sum_{k=1}^p (L^{-2})_{jk} \sum_{i=1}^{\infty} a_{\beta,i}^2 \theta_{ij} \theta_{ik} = \pi^{2 \beta} \sum_{i=1}^{\infty} a_{\beta,i}^2 \theta_i^{\top} L^{-2} \theta_i.
\end{align*}
which indicates that $\theta \in \Theta ( \beta, L^2/\pi^{2 \beta})$.

Conversely, let $\theta \in \Theta \left( \beta, L^2/\pi^{2 \beta} \right)$.
Since 
\begin{align*}
	\sum_{m=1}^{\infty} m^l (|\theta_{2m,j}| + |\theta_{2m+1,j}|) & \leq \sum_{m=1}^{\infty} m^{\beta-1} (|\theta_{2m,j}| + |\theta_{2m+1,j}|) \\
	& \leq \left( 2 \sum_{m=1}^{\infty} m^{2 \beta} (\theta_{2m,j}^2 + \theta_{2m+1,j}^2) \right) \left( \sum_{m=1}^{\infty} m^{-2} \right) < \infty,
\end{align*}
for $j=1,\dots,p$ and $l=0,1,\dots,\beta-1$, the function $f_j^{(l)}$ is expressed by the uniformly convergent series \eqref{fourier_deriv} and thus periodic for $j=1,\dots,p$ and $l=0,1,\dots,\beta-1$.
Let
\begin{align*}
	g_j(x) = \sum_{m=1}^{\infty} (2 \pi m)^{\beta} (s_{2m,j}(\beta) \phi_{2m}(x) + s_{2m+1,j}(\beta) \phi_{2m+1}(x)), \quad j=1,\dots,p.
\end{align*}
By using the orthonormality of $\phi=(\phi_i)$ similarly to the above,
\begin{align*}
	\int_0^1 g (x)^{\top} L^{-2} g (x) {\rm d} x = \pi^{2 \beta} \sum_{i=1}^{\infty} a_{\beta,i}^2 \theta_i^{\top} L^{-2} \theta_i \leq 1.
\end{align*}
It also implies that $g_j \in L_2[0,1]$ for $j=1,\dots,p$.
Since the Fourier series of a function in $L_2[0,1]$ is termwise integrable,
\begin{align*}
	\int_a^b g_j (x) {\rm d} x &= \sum_{m=1}^{\infty} (2 \pi m)^{\beta} \left( s_{2m,j}(\beta) \int_a^b \phi_{2m}(x)  {\rm d} x + s_{2m+1,j}(\beta) \int_a^b \phi_{2m+1}(x) {\rm d} x \right) \\
	&= \sum_{m=1}^{\infty} (2 \pi m)^{\beta-1} \left( s_{2m,j}(\beta-1) (\phi_{2m}(b)-\phi_{2m}(a))  + s_{2m+1,j}(\beta)(\phi_{2m+1}(b)-\phi_{2m+1}(a)) \right) \\
	&= f_j^{(\beta-1)} (b) - f_j^{(\beta-1)} (a),
\end{align*}
for $0 \leq a \leq b \leq 1$, which indicates that $f_j^{(\beta-1)}$ is absolutely continuous and $g_j = f_j^{(\beta)}$ almost everywhere for $j=1,\dots,p$.
	Therefore, $f \in W (\beta,L)$.
	
	\subsection{Technical Lemma}\label{app_lem}
	\begin{lemma}\label{lem_bayes_risk}
		Let $Y \sim {\rm N}_p (\theta, \varepsilon^2 I_p)$.
		For the prior $\theta \sim {\rm N}_p(0,\Sigma)$, the Bayes estimator of $\theta$ under the quadratic loss $l_Q(\theta,\hat{\theta})=(\hat{\theta}-\theta)^{\top} Q (\hat{\theta}-\theta)$ is the posterior mean
		\begin{align*}
			\hat{\theta}^{\pi} = \Sigma (\Sigma + \varepsilon^2 I_p)^{-1} y,
		\end{align*}
		which is independent of $Q$, and its Bayes risk is
		\begin{align*}
			{\rm E}_{\pi} {\rm E}_{\theta} (\hat{\theta}^{\pi} - \theta)^{\top} Q (\hat{\theta}^{\pi} - \theta) = \varepsilon^2 {\rm tr} (Q \Sigma (\Sigma + \varepsilon^2 I_p)^{-1}).
		\end{align*}
	\end{lemma}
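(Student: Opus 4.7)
The plan is to carry out the standard Bayesian computation for a conjugate Gaussian-Gaussian model in three short steps. First, I would show that for any positive definite $Q$ the Bayes estimator is the posterior mean and therefore does not depend on $Q$. Writing the posterior expected loss as ${\rm E}[(\hat{\theta}-\theta)^{\top} Q (\hat{\theta}-\theta) \mid y] = (\hat{\theta}-{\rm E}[\theta \mid y])^{\top} Q (\hat{\theta}-{\rm E}[\theta \mid y]) + {\rm tr}(Q\,{\rm Cov}(\theta \mid y))$, the strict convexity coming from $Q \succ O$ gives the unique minimizer $\hat{\theta}^{\pi} = {\rm E}[\theta \mid y]$, independent of $Q$.

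Second, I would compute ${\rm E}[\theta \mid y]$ and ${\rm Cov}(\theta \mid y)$ using joint Gaussianity. Since $Y = \theta + \varepsilon Z$ with $Z \sim {\rm N}_p(0,I_p)$ independent of $\theta \sim {\rm N}_p(0,\Sigma)$, the pair $(\theta, Y)$ is jointly Gaussian with ${\rm E}[Y]=0$, ${\rm Cov}(Y)=\Sigma+\varepsilon^2 I_p$, and ${\rm Cov}(\theta,Y)=\Sigma$. The standard conditional formulas then yield $\hat{\theta}^{\pi} = \Sigma(\Sigma+\varepsilon^2 I_p)^{-1} y$ and ${\rm Cov}(\theta \mid y) = \Sigma - \Sigma(\Sigma+\varepsilon^2 I_p)^{-1}\Sigma$, which is independent of $y$. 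A one-line matrix identity, obtained by multiplying $\Sigma - \Sigma(\Sigma+\varepsilon^2 I_p)^{-1}\Sigma$ on the right by $(\Sigma+\varepsilon^2 I_p)$ and simplifying, rewrites this posterior variance as $\varepsilon^2 \Sigma(\Sigma+\varepsilon^2 I_p)^{-1}$.

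Third, I would compute the Bayes risk by iterated expectation. Rewriting the loss as a trace gives ${\rm E}_{\pi}{\rm E}_{\theta}[(\hat{\theta}^{\pi}-\theta)^{\top} Q (\hat{\theta}^{\pi}-\theta)] = {\rm tr}(Q \cdot {\rm E}[(\hat{\theta}^{\pi}-\theta)(\hat{\theta}^{\pi}-\theta)^{\top}])$, and conditioning on $Y$ turns the inner expectation into ${\rm E}[{\rm Cov}(\theta \mid Y)]$. Since the posterior covariance is deterministic, this is just $\varepsilon^2 \Sigma(\Sigma+\varepsilon^2 I_p)^{-1}$, so the Bayes risk equals $\varepsilon^2 {\rm tr}(Q\Sigma(\Sigma+\varepsilon^2 I_p)^{-1})$ as claimed.

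The argument is essentially a textbook conjugate-prior calculation and I do not anticipate a genuine obstacle. The only steps requiring a moment of care are the strict convexity argument showing the optimum is the posterior mean for every positive definite $Q$, and the simplification of the posterior variance into the compact form $\varepsilon^2 \Sigma(\Sigma+\varepsilon^2 I_p)^{-1}$; both are routine.
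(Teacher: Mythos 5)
Your proposal is correct and is exactly the standard conjugate Gaussian calculation that the paper implicitly relies on: the paper's own proof consists only of the remark ``straightforward calculation'' with a citation to Lehmann and Casella, and your three steps (posterior-mean optimality for every positive definite $Q$ by the bias--variance decomposition of the posterior expected loss, the joint-Gaussian conditional formulas giving $\hat{\theta}^{\pi}=\Sigma(\Sigma+\varepsilon^2 I_p)^{-1}y$ and posterior covariance $\varepsilon^2\Sigma(\Sigma+\varepsilon^2 I_p)^{-1}$, and the trace identity for the Bayes risk) are precisely the details being omitted. No gaps; all identities check out.
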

	\begin{proof}
		Straightforward calculation. See \cite{Lehmann}.
	\end{proof}
	
	\begin{lemma}\label{lem_chi2}
		Let $X_i \sim {\rm N} (0,1)$ be independent standard Gaussian random variables for $i=1,\dots,n$. 
		Then, for $t > 0$,
		\begin{align*}
			{\rm Pr} \left[ \sum_{i=1}^n b_i^2 (X_i^2 - 1) \geq t \sum_{i=1}^n b_i^2 \right] \leq \exp \left( -\frac{t^2 \sum_{i=1}^n b_i^2}{8 \max_{1 \leq i \leq n} b_i^2} \right).
		\end{align*}
	\end{lemma}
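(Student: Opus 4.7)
The plan is to prove this concentration inequality by a standard Chernoff argument applied to the moment generating function of a weighted centered chi-square. Writing $Y = \sum_{i=1}^n b_i^2 (X_i^2 - 1)$, $S = \sum_i b_i^2$, and $M = \max_i b_i^2$, Markov's inequality gives ${\rm Pr}[Y \geq tS] \leq e^{-stS}\, {\rm E}[e^{sY}]$ for any admissible $s > 0$.

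The first step is to compute the log-MGF explicitly. Using the Gaussian identity ${\rm E}[e^{s b_i^2 X_i^2}] = (1 - 2 s b_i^2)^{-1/2}$ for $s < 1/(2 b_i^2)$, together with independence of the $X_i$,
\begin{align*}
\log {\rm E}[e^{sY}] = \sum_{i=1}^n \left\{ -s b_i^2 - \tfrac{1}{2}\log(1 - 2 s b_i^2) \right\},
\end{align*}
valid for $s < 1/(2 M)$. The second step is to bound this using the elementary inequality $-\log(1-u) \leq u + u^2$ on $u \in [0, 1/2]$; this follows from the fact that $u + u^2 + \log(1-u)$ has derivative $u(1-2u)/(1-u) \geq 0$ there. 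Setting $u = 2 s b_i^2$, whenever $s \leq 1/(4 M)$ the $\pm s b_i^2$ terms cancel and I obtain $\log {\rm E}[e^{sY}] \leq 2 s^2 \sum_i b_i^4 \leq 2 s^2 M S$.

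The final step is to optimize the Chernoff bound $-stS + 2 s^2 M S$ over $s$, which is minimized at $s_* = t/(4 M)$. This choice is admissible precisely when $t \leq 1$, which is exactly the regime used in the application inside the proof of Theorem~\ref{th_pinsker}, where $t = (1-\delta^2)/\delta^2 \in (0, 1)$ for $\delta \in (1/\sqrt{2}, 1)$. Substituting $s_*$ produces the exponent $-t^2 S/(4 M) + t^2 S/(8 M) = -t^2 S/(8 M)$, matching the stated bound.

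I do not anticipate a serious obstacle: the only technical ingredients are the Taylor-type bound $-\log(1-u) \leq u + u^2$ and verification of admissibility of the optimal $s$, both elementary. The one subtle point is that the subgaussian exponent claimed in the lemma cannot in fact hold for arbitrarily large $t$ (single-coordinate Gaussian tails decay only like $e^{-t/2}$), but the range $t \in (0, 1]$ handled by this argument already covers the use of the lemma in bounding $\pi(\Phi^c)$ inside the proof of Theorem~\ref{th_pinsker}.
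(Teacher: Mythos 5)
Your Chernoff argument is correct: the MGF computation, the bound $-\log(1-u)\le u+u^2$ on $[0,1/2]$, the cancellation giving $\log{\rm E}[e^{sY}]\le 2s^2\sum_i b_i^4\le 2s^2MS$, and the optimization at $s_*=t/(4M)$ (admissible for $t\le 1$) all check out and yield the exponent $-t^2S/(8M)$. The paper itself gives no argument here --- it simply cites Lemma~3.5 of Tsybakov (2009) --- and your proof is essentially the standard proof behind that citation, so there is no real methodological divergence; what your write-up adds is a self-contained derivation plus a correct observation about the range of validity. Indeed, as you note, the inequality as stated in Lemma~\ref{lem_chi2} cannot hold for all $t>0$ (take $n=1$, $b_1=1$ and $t$ large: the left side decays like $e^{-t/2}$ up to polynomial factors, which eventually exceeds $e^{-t^2/8}$); Tsybakov's lemma carries the restriction $0<t<1$, which the paper's restatement silently drops. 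This is harmless for the paper, since the only use is in bounding $\pi(\Phi^c)$ in the proof of Theorem~\ref{th_pinsker} with $t=(1-\delta^2)/\delta^2\in(0,1)$ for $\delta\in(1/\sqrt{2},1)$ (and likewise in the finite-dimensional appendix), exactly as you point out. One trivial remark: indices with $b_i=0$ contribute nothing to the MGF bound, consistent with the paper's use of $\max\{b_{ij}^2\mid b_{ij}^2>0\}$.
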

	\begin{proof}
		See Lemma 3.5 of \cite{Tsybakov}.
	\end{proof}
	
	\begin{lemma}\label{lem_concentration}
		Let $X_i \sim {\rm N} (0,1)$ be independent standard Gaussian random variables for $i=1,\dots,n$. 
		Then,
		\begin{align*}
			{\rm E} \left[ \left( \sum_{i=1}^n b_i^2 X_i^2 \right)^2 \right] \leq 3 \left( \sum_{i=1}^n b_i^2 \right)^2.
		\end{align*}
	\end{lemma}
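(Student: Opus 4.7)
The plan is to apply the standard decomposition $E[Y^2] = \mathrm{Var}(Y) + (E[Y])^2$ with $Y = \sum_{i=1}^n b_i^2 X_i^2$ and exploit two elementary facts: that each $X_i^2$ follows a $\chi^2_1$ distribution with mean $1$ and variance $2$, and that the $X_i$ are independent. This turns the second moment into an explicit expression in the $b_i^2$, after which a simple inequality between the sum of squares and the square of the sum closes the argument.

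Concretely, I would first compute $E[Y] = \sum_i b_i^2$ by linearity of expectation together with $E[X_i^2] = 1$. Next, using independence and $\mathrm{Var}(X_i^2) = 2$, I would get $\mathrm{Var}(Y) = 2 \sum_i b_i^4$. Combining yields
\begin{align*}
    E\left[ \left( \sum_{i=1}^n b_i^2 X_i^2 \right)^2 \right] = \left( \sum_{i=1}^n b_i^2 \right)^2 + 2 \sum_{i=1}^n b_i^4.
\end{align*}
The final step is the elementary bound $\sum_i b_i^4 \leq \bigl( \sum_i b_i^2 \bigr)^2$, which holds because all $b_i^2 \geq 0$ (each term on the left is bounded by the whole sum on the right times $b_i^2$, and summing gives the inequality). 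Substituting this in produces the factor $3$ in the claimed bound.

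There is no genuine obstacle here: the lemma is a routine second-moment calculation, and the constant $3$ emerges cleanly from the $1 + 2$ split between $(E[Y])^2$ and $\mathrm{Var}(Y)$. The only stylistic choice is whether to appeal to the independence/variance formula directly or to expand $(\sum_i b_i^2 X_i^2)^2$ into diagonal and cross terms and then use $E[X_i^2 X_j^2] = 1$ for $i \neq j$ and $E[X_i^4] = 3$; both routes give the same expression, and either is adequate for the use of this lemma in \eqref{finite_exp} in the proof of Theorem~\ref{th_pinsker}.
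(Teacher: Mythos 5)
Your proof is correct and is essentially the paper's argument: the paper expands $\bigl(\sum_i b_i^2 X_i^2\bigr)^2$ into diagonal and cross terms using ${\rm E}[X_i^4]=3$ and ${\rm E}[X_i^2X_j^2]=1$, which yields exactly the same expression as your variance decomposition, and then bounds it by $3\bigl(\sum_i b_i^2\bigr)^2$ using nonnegativity of the $b_i^2$. No gap; the two routes are interchangeable, as you note yourself.
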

	\begin{proof}
		Since ${\rm E}[X_i^4]=3$ and ${\rm E} [ X_i^2 X_j^2 ] = 1$ for $i \neq j$,
		\begin{align*}
			{\rm E} \left[ \left( \sum_{i=1}^n b_i^2 X_i^2 \right)^2 \right] &= \sum_{i=1}^n b_i^4 {\rm E} [ X_i^4 ] + \sum_{i=1}^n \sum_{j=1}^n I(i \neq j) b_i^2 b_j^2 {\rm E} [ X_i^2 X_j^2 ] \\
			&= 3 \sum_{i=1}^n b_i^4 + \sum_{i=1}^n \sum_{j=1}^n I(i \neq j) b_i^2 b_j^2 \\
			&\leq 3 \left( \sum_{i=1}^n b_i^2 \right)^2.
		\end{align*}
	\end{proof}
	
	\subsection{Finite-dimensional case}\label{app_finite}
	Here, we extend the finite-dimensional version of Pinsker's theorem \cite{Nussbaum99,Wasserman} to multivariate setting.
	
	Suppose $y_i = \theta_i + \varepsilon \xi_i$ for $i=1,\dots,n$, where $\xi_i \sim {\rm N}_p (0, (1/n) I_p)$ are independent.
	We consider estimation of $\theta=(\theta_1,\dots,\theta_n)$ under the quadratic loss specified by a $p \times p$ positive definite matrix $Q$:
	\begin{align*}
		L_Q(\theta,\hat{\theta}) =  \sum_{i=1}^{n} (\hat{\theta}_i - \theta_i)^{\top} Q (\hat{\theta}_i - \theta_i).
	\end{align*}
	The risk function is
	\begin{align*}
		R_Q(\theta,\hat{\theta}) = {\rm E}_{\theta} \left[ \sum_{i=1}^{n} (\hat{\theta}_i - \theta_i)^{\top} Q (\hat{\theta}_i - \theta_i) \right].
	\end{align*}
	We consider asymptotically minimax estimation on the ellipsoid
	\begin{align*}
		\Theta_n(Q) = \left\{ \theta = (\theta_1,\dots,\theta_n) \relmiddle| \sum_{i=1}^{n} \theta_i^{\top} Q^{-1} \theta_i \leq 1 \right\} \subset \mathbb{R}^{np},
	\end{align*}
	as $n \to \infty$.
	
	Let $g(\kappa)=\varepsilon^2 \kappa^{-1} {\rm tr} (I_p- \kappa Q^{-1})_+$ for $\kappa>0$, where $( \cdot )_+$ denotes the projection onto the positive semidefinite cone defined in Section~\ref{sec_pinsker}.
	Since $g$ is monotone decreasing, $g(\kappa) \to \infty$ as $\kappa \to 0$ and $g(\kappa) \to 0$ as $\kappa \to \infty$, there exists a unique $\kappa$ that satisfies $g(\kappa)=1$.
By using this $\kappa$, we define a linear estimator $\hat{\theta}_{\mathrm{P}}=(\hat{\theta}_{\mathrm{P},i})$ by
	\begin{align*}
		\hat{\theta}_{\mathrm{P},i} = C_{\mathrm{P}} y_i, \quad C_{\mathrm{P}} = (I_p - \kappa Q^{-1})_+.\end{align*}

	\begin{theorem}\label{th_finite}
		The estimator $\hat{\theta}_{\mathrm{P}}  = (\hat{\theta}_{\mathrm{P},i})$
is asymptotically minimax on $\Theta_n(Q)$:
		\begin{align}
			\sup_{\theta \in \Theta_n(Q)} R_Q(\theta,\hat{\theta}_{\mathrm{P}}) &\sim \inf_{\hat{\theta}} \sup_{\theta \in \Theta_n(Q)} R_Q(\theta,\hat{\theta}) \sim \varepsilon^2 {\rm tr} (C_{\mathrm{P}} Q) (= O(1)),
		\end{align}
		as $n \to \infty$, where $\inf$ is taken over all estimators.
	\end{theorem}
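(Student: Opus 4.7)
The plan is to mirror the proof of Theorem~\ref{th_pinsker} with the simplification that sums are now finite and the weights $a_{\beta,i}$ are replaced by $1$. As before, the argument consists of a direct upper bound for $\hat\theta_{\mathrm{P}}$ and a Bayesian lower bound with a nearly least favorable Gaussian prior.

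\textbf{Upper bound.} Since $\xi_i\sim\mathrm{N}_p(0,n^{-1}I_p)$, the risk of $\hat\theta_{\mathrm{P},i}=C_{\mathrm{P}}y_i$ splits into bias and variance as
\[
R_Q(\theta,\hat\theta_{\mathrm{P}})=\sum_{i=1}^{n}\theta_i^{\top}(I_p-C_{\mathrm{P}})Q(I_p-C_{\mathrm{P}})\theta_i+\varepsilon^{2}\operatorname{tr}(C_{\mathrm{P}}^{2}Q),
\]
where the variance absorbs the factor $n\cdot(\varepsilon^{2}/n)$. Exactly as in \eqref{mat_ineq}, $(I_p-C_{\mathrm{P}})Q(I_p-C_{\mathrm{P}})\preceq\kappa^{2}Q^{-1}$, so the bias part is at most $\kappa^{2}$ on $\Theta_n(Q)$. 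Combining this with the identity $\kappa=\varepsilon^{2}\operatorname{tr}(C_{\mathrm{P}})$ that follows from $g(\kappa)=1$ and the per-eigenvalue relation $\kappa(1-\kappa/\lambda_j)_{+}+(1-\kappa/\lambda_j)_{+}^{2}\lambda_j=(1-\kappa/\lambda_j)_{+}\lambda_j$ underlying \eqref{lem_risk} yields $\kappa^{2}+\varepsilon^{2}\operatorname{tr}(C_{\mathrm{P}}^{2}Q)=\varepsilon^{2}\operatorname{tr}(C_{\mathrm{P}}Q)$, hence $\sup_{\theta\in\Theta_n(Q)}R_Q(\theta,\hat\theta_{\mathrm{P}})\le\varepsilon^{2}\operatorname{tr}(C_{\mathrm{P}}Q)$.

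\textbf{Lower bound.} For $\delta\in(1/\sqrt{2},1)$ I would impose the iid Gaussian prior $\theta_i\sim\mathrm{N}_p(0,\delta^{2}V)$ with
\[
V=\frac{\varepsilon^{2}}{n\kappa}\,Q\,C_{\mathrm{P}},
\]
chosen so that $V(V+\varepsilon^{2}n^{-1}I_p)^{-1}=C_{\mathrm{P}}$. By Lemma~\ref{lem_bayes_risk} the Bayes estimator is linear, and a per-eigenvalue check (using $\lambda_j\ge\kappa$ wherever $(1-\kappa/\lambda_j)_{+}>0$) gives
\[
r(\pi)=\varepsilon^{2}\operatorname{tr}\!\bigl(Q\,\delta^{2}V(\delta^{2}V+\varepsilon^{2}n^{-1}I_p)^{-1}\bigr)\ge\delta^{2}\varepsilon^{2}\operatorname{tr}(C_{\mathrm{P}}Q).
\]
The relation $g(\kappa)=1$ also yields $\mathrm{E}_{\pi}\!\left[\sum_i\theta_i^{\top}Q^{-1}\theta_i\right]=\delta^{2}<1$, so the prior mass concentrates in $\Theta_n(Q)$. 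Restricting to estimators valued in the convex body $\Theta_n(Q)$ and mimicking \eqref{lower_bound2}, I split the Bayes integral and control the residual by Cauchy--Schwarz, using Lemma~\ref{lem_chi2} for $\pi(\Theta_n(Q)^{c})$ and Lemma~\ref{lem_concentration} for $\mathrm{E}_{\pi}[(\sum_i\theta_i^{\top}Q\theta_i)^{2}]$. Since the eigenvalues of the prior covariance are of order $n^{-1}$, Lemma~\ref{lem_chi2} yields $\pi(\Theta_n(Q)^{c})=O(\exp(-cn))$, while Lemma~\ref{lem_concentration} gives an $O(1)$ moment bound, so the residual is $o(1)$. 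Thus $\inf_{\hat\theta}\sup_{\theta\in\Theta_n(Q)}R_Q(\theta,\hat\theta)\ge\delta^{2}\varepsilon^{2}\operatorname{tr}(C_{\mathrm{P}}Q)(1-o(1))$ as $n\to\infty$, and letting $\delta\to 1$ matches the upper bound.

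\textbf{Main obstacle.} As in Theorem~\ref{th_pinsker}, the technical core is showing that only exponentially little prior mass escapes $\Theta_n(Q)$. Here the eigenvalues $b_{ij}^{2}$ arising in Lemma~\ref{lem_chi2} satisfy $\sum_{i,j}b_{ij}^{2}=\delta^{2}$ while $\max_{i,j}b_{ij}^{2}=O(1/n)$, so the ratio in the tail bound is $\Theta(n)$ and the residual decays as $\exp(-cn)$; this is strictly simpler than the $\exp(-c\varepsilon^{-2/(2\beta+1)})$ estimate needed in the Sobolev case. The rest is essentially bookkeeping, and the asymptotic behavior comes entirely from the $\delta\to 1$ limit.
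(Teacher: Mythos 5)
Your proposal is correct and follows essentially the same route as the paper's own proof: the identical bias--variance bound with $(I_p-C_{\mathrm{P}})Q(I_p-C_{\mathrm{P}})\preceq\kappa^2Q^{-1}$ and the identity $\kappa^2+\varepsilon^2\operatorname{tr}(C_{\mathrm{P}}^2Q)=\varepsilon^2\operatorname{tr}(C_{\mathrm{P}}Q)$ from $g(\kappa)=1$, and the same nearly least favorable prior $\theta_i\sim{\rm N}_p(0,\delta^2V)$ with $V=(\varepsilon^2/(n\kappa))QC_{\mathrm{P}}$, with the residual over $\Theta_n(Q)^c$ controlled by Lemmas~\ref{lem_chi2} and \ref{lem_concentration} exactly as in the paper. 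No gaps to flag.
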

	\begin{proof}
		Let $Q=U^{\top} \Lambda U$ be a spectral decomposition of $Q$ with $\Lambda={\rm diag}(\lambda_1,\dots,\lambda_p)$. 
		Then, $C_{\mathrm{P}} = U^{\top} (I_p-\kappa \Lambda^{-1})_+ U$.
		Therefore, by using $(1-(1-l)_+)^2 \leq l^2$,
		\begin{align}
			(I_p-C_{\mathrm{P}})Q(I_p-C_{\mathrm{P}}) &= U^{\top} (I_p-(I_p-\kappa \Lambda^{-1})_+)^2 \Lambda U \preceq U^{\top} \cdot \kappa^2 \Lambda^{-2} \cdot \Lambda U =\kappa^2 Q^{-1}. \label{mat_ineq2}
		\end{align}
		Also, by using $\kappa \lambda_j^{-1} + (1-\kappa \lambda_j^{-1})_+=1$ when $(1-\kappa \lambda_j^{-1})_+ > 0$,
		\begin{align}\label{lem_risk2}
			{\rm tr} (C_{\mathrm{P}} Q (\kappa Q^{-1} + C_{\mathrm{P}})) &= \sum_{j=1}^p (1-\kappa \lambda_j^{-1})_+ \lambda_j (\kappa \lambda_j^{-1} + (1- \kappa \lambda_j^{-1})_+) \nonumber \\
			&= \sum_{j=1}^p (1- \kappa \lambda_j^{-1})_+ \lambda_j =  {\rm tr} (C_{\mathrm{P}} Q).
		\end{align}
		
From
		\begin{align*}
			{\rm E}_{\theta} \left[ (\hat{\theta}_{\mathrm{P},i}-\theta_i) (\hat{\theta}_{\mathrm{P},i}-\theta_i)^{\top} \right] = (I_p-C_{\mathrm{P}}) \theta_i \theta_i^{\top} (I_p-C_{\mathrm{P}}) + n^{-1} \varepsilon^2 C_{\mathrm{P}}^2,
		\end{align*}
		we have
		\begin{align*}
			R_Q(\theta,\hat{\theta}_{\mathrm{P}}) &= \sum_{i=1}^{n} \theta_i^{\top} (I_p-C_{\mathrm{P}}) Q (I_p-C_{\mathrm{P}}) \theta_i + \varepsilon^2 {\rm tr} (C_{\mathrm{P}}^2 Q) \\
			&\leq \kappa^2 \sum_{i=1}^{n} \theta_i^{\top} Q^{-1} \theta_i + \varepsilon^2 {\rm tr} (C_{\mathrm{P}}^2 Q) \\
			&\leq \kappa^2 \cdot \varepsilon^2 \kappa^{-1} {\rm tr} (C_{\mathrm{P}}) + \varepsilon^2 {\rm tr} (C_{\mathrm{P}}^2 Q) \\
			& = \varepsilon^2 {\rm tr} (C_{\mathrm{P}} Q (\kappa Q^{-1}+C_{\mathrm{P}}))  = \varepsilon^2 {\rm tr} (C_{\mathrm{P}} Q)
		\end{align*}
		for $\theta \in \Theta_n(Q)$, where we used \eqref{mat_ineq2}, $g(\kappa)=1$ and \eqref{lem_risk2}.
		Thus,
		\begin{align}\label{finite_upper}
			\inf_{\hat{\theta}} \sup_{\theta \in \Theta_n(Q)} {\rm E}_{\theta} \left[ \sum_{i=1}^{n}  (\hat{\theta}_{\mathrm{P},i} - \theta_i)^{\top} Q (\hat{\theta}_{\mathrm{P},i} - \theta_i) \right] \leq \varepsilon^2 {\rm tr} (C_{\mathrm{P}} Q).
		\end{align}
		
		Let $\delta \in (0,1)$ and $\pi$ be the prior $\theta_i \sim {\rm N}(0,\delta^2 V)$ independently with $V=(\varepsilon^2/n) \kappa^{-1} Q (I_p-\kappa Q^{-1})_+$ for $i=1,\dots,n$.
		Then, the Bayes estimator with respect to this prior (posterior mean) is
		\begin{align*}
			\hat{\theta}^{\pi}_i = C^{\pi} x_i, \quad C^{\pi} = \delta^2 V ( \delta^2 V + (\varepsilon^2/n) I_p)^{-1},
		\end{align*}
		for $i=1,\dots,n$ and its Bayes risk is
		\begin{align*}
			r(\pi) & = n \cdot \delta^2 (\varepsilon^2/n) {\rm tr} (Q V (\delta^2 V + (\varepsilon^2/n) I_p)^{-1}) \\
			& \geq \delta^2 \varepsilon^2 {\rm tr} (Q V (V + (\varepsilon^2/n) I_p)^{-1}) =  \delta^2 \varepsilon^2 {\rm tr} (C_{\mathrm{P}} Q).
		\end{align*}
		Let $\mathbb{E}$ be the class of estimators $\hat{\theta}$ of ${\theta}$ that take values in $\Theta_n(Q)$.
		Since $\Theta_n(Q)$ is closed and convex, we can restrict our attention to estimators in $\mathbb{E}$ in considering the lower bound for minimax risk over $\Theta_n(Q)$.
		For any estimator $\hat{\theta} \in \mathbb{E}$,
		\begin{align*}
			r(\pi) &\leq \int R_Q(\theta,\hat{\theta}) \pi(\theta) {\rm d} \theta \leq \sup_{\theta \in \Theta_n(Q)} R_Q(\theta,\hat{\theta}) + \sup_{\hat{\theta} \in \mathbb{E}} \int_{\Theta_n(Q)^c} R_Q(\theta,\hat{\theta}) \pi(\theta) {\rm d} \theta,
		\end{align*}
		where $\Theta_n(Q)^c=\mathbb{R}^{np} \setminus \Theta_n(Q)$.
		Taking the infimum over all estimators in $\mathbb{E}$,
		\begin{align*}
			r(\pi) & \leq \inf_{\hat{\theta}} \sup_{\theta \in \Theta_n(Q)} R_Q(\theta,\hat{\theta}) + \sup_{\hat{\theta} \in \mathbb{E}} \int_{\Theta_n(Q)^c} R_Q(\theta,\hat{\theta}) \pi(\theta) {\rm d} \theta,
		\end{align*}
		which yields
		\begin{align*}
			\inf_{\hat{\theta}} \sup_{\theta \in \Theta_n(Q)} R_Q(\theta,\hat{\theta}) \geq r(\pi) - \sup_{\hat{\theta} \in \mathbb{E}} \int_{\Theta_n(Q)^c} R_Q(\theta,\hat{\theta}) \pi(\theta) {\rm d} \theta.
		\end{align*}
		Now, from $(a+b)^{\top} Q (a+b) \leq 2 (a^{\top} Q a + b^{\top} Q b)$ and the Cauchy-Schwarz inequality,
		\begin{align*}
			& \sup_{\hat{\theta} \in \mathbb{E}} \int_{\Theta_n(Q)^c} R_Q(\theta,\hat{\theta}) \pi(\theta) {\rm d} \theta \\
			\leq & 2 \int_{\Theta_n(Q)^c} \left( \sum_{i=1}^n \theta_i^{\top} Q \theta_i \right) \pi(\theta) {\rm d} \theta + 2 \sup_{\hat{\theta} \in \mathbb{E}} \int_{\Theta_n(Q)^c} {\rm E}_{\theta} \left[ \sum_{i=1}^n \hat{\theta}_i^{\top} Q \hat{\theta}_i \right] \pi(\theta) {\rm d} \theta \\
			\leq & 2 (\pi(\Theta_n(Q)^c))^{1/2} {\rm E}_{\pi} \left[ \left( \sum_{i=1}^n \theta_i^{\top} Q \theta_i \right)^2 \right]^{1/2} + 2 M \pi (\Theta_n(Q)^c),
		\end{align*}
		where
		\begin{align*}
			M = \max_{\theta \in \Theta_n(Q)} \sum_{i=1}^n \theta_i^{\top} Q \theta_i = \lambda_1^2 = O(1).
		\end{align*}
		Let $\xi_i = \delta^{-1} V^{-1/2} \theta_i \sim {\rm N}_p(0,I_p)$ and $V^{1/2} Q^{-1} V^{1/2} = U^{\top} \Psi U$ with $\Psi={\rm diag}(\psi_{1},\dots,\psi_{p})$ be a spectral decomposition of $V^{1/2} Q^{-1} V^{1/2}$.
		Then,
		\begin{align*}
			\theta_i^{\top} Q^{-1} \theta_i = \delta^2 \xi_i^{\top} V^{1/2} Q^{-1} V^{1/2} \xi_i = \delta^2 (U \xi_i)^{\top} \Psi (U \xi_i) = \delta^2 \sum_{j=1}^p \psi_{j} (U {\xi}_i)_j^2.
		\end{align*}
		Therefore, by putting $b_{j}^2 = \delta^2 \psi_{j}$,
		\begin{align*}
			\pi (\Theta_n(Q)^c) & = {\rm Pr} \left[ \sum_{i=1}^n \theta_i^{\top} Q^{-1} \theta_i > 1 \right] = {\rm Pr} \left[ \sum_{i=1}^n \sum_{j=1}^p b_{j}^2 ((U \xi_i)_j^2-1) > \frac{1-\delta^2}{\delta^2} \sum_{i=1}^n \sum_{j=1}^p b_{j}^2  \right],
		\end{align*}
		where we used $\sum_{i,j} b_{j}^2 = n \delta^2 {\rm tr} (V^{1/2} Q^{-1} V^{1/2}) = \delta^2 g(\kappa) = \delta^2$.
		Since $U \xi_i \sim {\rm N}_p (0,I_p)$ independently for $i=1,\dots,n$, from Lemma~A.2,
		\begin{align*}
			\pi (\Theta_n(Q)^c) = O \left( \exp \left( - \frac{(1-\delta^2)^2}{8 \delta^4} \frac{\sum_{i,j} b_{j}^2}{\max_{j} \{ b_{j}^2 \mid b_j^2>0 \}} \right) \right) = O \left( \exp \left( - C n  \right) \right)\end{align*}
		for $\delta \in (1/\sqrt{2},1)$ with some constant $C>0$.
		Also, by putting $V^{1/2} Q V^{1/2} = U^{\top} D U$ with $D= {\rm diag}(\mu_{1},\dots,\mu_{p})$,
		\begin{align*}
			\theta_i^{\top} Q \theta_i = \delta^2 \xi_i^{\top} V^{1/2} Q V^{1/2} \xi_i = \delta^2 (U \xi_i)^{\top} D (U \xi_i) = \delta^2 \sum_{j=1}^p \mu_{j} (U \xi_i)_j^2.
		\end{align*}
		Thus, from Lemma~A.3,
		\begin{align*}
			{\rm E}_{\pi} \left[ \left( \sum_{i=1}^n \theta_i^{\top} Q \theta_i \right)^2 \right] &\leq 3 \delta^4  \left( \sum_{i=1}^n \sum_{j=1}^p \mu_{j} \right)^2 = 3 \delta^4 n^2 {\rm tr} (Q V)^2 = 3 \delta^4 \varepsilon^4 \kappa^{-1}  {\rm tr} (C_{\mathrm{P}} Q^2) = O(1). \end{align*}
Therefore, from ${\rm tr} (C_{\mathrm{P}} Q)=O(1)$,
		\begin{align*}
			\inf_{\hat{\theta}} \sup_{\theta \in \Theta_n(Q)} R_Q(\theta,\hat{\theta}) \geq \delta^2 \varepsilon^2 {\rm tr} (C_{\mathrm{P}} Q) (1+o(1)).
		\end{align*}
		By taking $\delta \to 1$, 
		\begin{align}\label{finite_lower}
			\inf_{\hat{\theta}} \sup_{\theta \in \Theta_n(Q)} R_Q(\theta,\hat{\theta}) \geq \varepsilon^2 {\rm tr} (C_{\mathrm{P}} Q) (1+o(1)).
		\end{align}
		
		From \eqref{finite_upper} and \eqref{finite_lower}, we obtain the theorem.
	\end{proof}
	\end{appendix}
		
	\section*{Acknowledgments}
	The author thanks the referees for helpful comments.
	The author thanks Iain Johnstone and William Strawderman for fruitful discussion.
	This work was supported by JSPS KAKENHI Grant Numbers 19K20220, 21H05205, 22K17865 and JST Moonshot Grant Number JPMJMS2024.

\end{document}